\newtheorem*{theorem*}{Theorem}
\newtheorem{theorem}{Theorem}
\newtheorem*{lemma*}{Lemma}
\newtheorem{lemma}[theorem]{Lemma}
\newtheorem{problem}[theorem]{Problem}
\newtheorem*{proposition*}{Proposition}
\newtheorem{proposition}[theorem]{Proposition}
\newtheorem*{fact*}{Fact}
\newtheorem*{question*}{Question}
\newtheorem{conjecture}[theorem]{Conjecture}
\newtheorem*{corollary*}{Corollary}
\newtheorem{corollary}[theorem]{Corollary}
\numberwithin{claimcounter}{theorem}
\newtheorem*{claim*}{Claim}
\theoremstyle{remark}
\newtheorem*{remark*}{Remark}
\newtheorem{remark}[theorem]{Remark}
\newtheorem{example}[theorem]{Example}
\theoremstyle{definition}
\newtheorem*{definition*}{Definition}
\newtheorem{definition}[theorem]{Definition}
\numberwithin{theorem}{section}
\newtheorem*{observation*}{Observation}
\newcommand{\norm}[1]{\left\|#1\right\|} 
\newcommand{\N}{\mathbb{N}}
\newcommand{\F}{\mathbb{F}}
\renewcommand{\P}{\mathbb{P}}
\DeclareMathOperator{\lex}{lex}
\DeclareMathOperator{\st}{st}
\DeclareMathOperator{\interior}{int}
\DeclareMathOperator{\sd}{sd}
\DeclareMathOperator{\lk}{lk}
\DeclareMathOperator{\SPAN}{span}
\DeclareMathOperator{\tail}{Tail}
\DeclareMathOperator{\fvec}{\bf f}
\DeclareMathOperator{\rinj}{r_{inj}}
\DeclareMathOperator{\cl}{cl}
\DeclareMathOperator{\rscv}{r_{scv}}
\DeclareMathOperator{\rdel}{r_{Del}}
\DeclareMathOperator{\Del}{Del}
\newcommand{\exshift}[1]{\Delta^{ex}(#1)}
\DeclareMathOperator{\length}{length}
\newcommand{\bbeta}{\boldsymbol{\beta}}
\newif\ifcmnts
\newcommand{\marrow}{\marginpar{\boldmath$\longleftarrow$}}
\newcommand{\denys}[1]{\ifhmode\newline\fi\marrow
  \textsf{\textcolor{magenta}{\bf
      Denys:} #1\newline}}
\newcommand{\denys}[1]{}
\newcommand{\fg}[1]{}
\title{The typical algebraic shifting of a surface}
\author{Denys Bulavka}
\address{Department of Mathematics \& Statistics\\ Dalhousie University\\ 6297 Castine Way\\ PO BOX 15000\\ Halifax\\ NS\\ Canada\\ B3H 4R2}
\email{denys.bulavka@dal.ca}
\thanks{D. Bulavka was partially supported by AARMS postdoctoral fellowship and by grant ISF-687/24.}
\author{Eran Nevo}
\address{Einstein Institute of Mathematics\\
 Hebrew University\\ Jerusalem~91904\\ Israel.}
\email{nevo@math.huji.ac.il}
\thanks{E. Nevo was partially supported by the Israel Science Foundation grants ISF-2480/20 and ISF-687/24.}
\author{Yuval Peled}
\address{Einstein Institute of Mathematics\\
 Hebrew University\\ Jerusalem~91904\\ Israel.}
\email{yuval.peled@mail.huji.ac.il}
\thanks{Y. Peled was partially supported by the Israel Science Foundation grant ISF-3464/24.}
\begin{document}

\begin{abstract}
We initiate a statistical study of Kalai's exterior algebraic shifting, focusing on concentration phenomena for random triangulations of a fixed space. First, for a uniform $n$-vertex refinement of any given graph $G$, we show that asymptotically almost-surely (a.a.s.) its exterior algebraic shifting is an explicit shifted graph depending only on $n$ and the Betti numbers of $G$. 
Next, for any given compact connected Riemannian surface $S$, sample $n$ points independently at random according to the volume measure, and consider the resulted a.a.s. unique Delaunay triangulation. We prove that a.a.s. its exterior algebraic shifting is an explicit shifted complex depending only on $n$ and the genus of $S$. In both results the expected shifted complex is a \emph{homology lex-segment} complex, a notion we define combinatorially and characterize numerically a l\'{a} Bj\"{o}rner-Kalai. 

As a tool to prove the result on surfaces, we prove a universality result on edge contractions: for every fixed surface triangulation $K$, every dense enough point set in the surface yields a Delaunay triangulation that edge contracts to $K$.
\end{abstract}

\maketitle

\section{Introduction}
A simplicial complex $K$ on the vertex set $[n]$ is \emph{shifted} if for all $1\le i <j$,   $j\in A\in K$ implies that 
$(A\setminus \{j\})\cup \{i\} \in K$.
Erd\H{o}s, Ko and Rado~\cite{erdos61-intersection} introduced
combinatorial shifting operators, which reduce problems on simplicial complexes to the shifted case. These operators have been used 
in many problems in extremal combinatorics, see Frankl's survey~\cite{frankl87-shifting}. 
Kalai \cite{KalaiPart1} introduced the exterior algebraic shifting operator on simplicial complexes, which is a canonical way to associate a shifted complex with a given simplicial complex, based on the exterior face ring over a fixed infinite field. 
Algebraic shifting has found many applications in combinatorics, especially in $f$-vector theory, and is interesting on its own, see, e.g., Kalai's survey 
\cite{kalai02-shifting}.  

While algebraic shifting preserves the face numbers and Betti numbers of the  simplicial complex, it is not determined by them in general, not even in dimension $1$, see Example~\ref{ex:graph} below. However, it is determined for all $n$ vertex triangulations of a fixed 1-dimensional compact manifold, by the properties mentioned above together with Theorem~\ref{t:simplex-union} below in order to combine the exterior algebraic shifting of each connected component.
In dimension 2, 
for any fixed compact connected surface without boundary, all its $n$ vertex triangulations have the same face numbers (and of course also the same Betti numbers).
While the exterior algebraic shifting is constant on $n$ vertex triangulation of the 2-sphere, see~\cite[Sec.2.4]{kalai02-shifting} or \cite[Footnote 2]{keehn24-exterior}, it is not constant for higher genus surfaces; see~\cite{keehn24-exterior} for a characterization of the possible shifted complexes for small genus surfaces. 
Although the exterior algebraic shifting is not constant in these cases, it was observed in~\cite[Remarks 5.3]{keehn24-exterior} that many of the triangulations of the torus have the same exterior algebraic shifting. Naturally the following question arises: is there a \emph{typical} exterior algebraic shifting of a surface triangulation?

We study the expected asymptotic behavior of the exterior algebraic shifting  
over triangulations of a fixed compact space $X$.
Two natural random models come to mind:
(1)
fix the topology of the space and sample a random $n$-vertex triangulation $U_n(X)$ of it
uniformly, or, 
(2)
fix a metric and a volume form on the space
and consider the random Delaunay model, where an $n$ points set $P_n$ is sampled at random according to the volume measure, and the unique Delaunay complex $\Del(P_n)$ they define is considered -- this complex turns out to be an embedded triangulation of $X$, for $X$ a Riemannian surface and $n$ large enough~\cite{leibon99-delaunay,dyer08-surface}.

We analyze the first model for $1$-dimensional spaces and the second model for closed connected Riemannian surfaces. (We will consider only surfaces without boundary.) 
In both cases we show \textit{concentration}, namely, that a.a.s. (i.e., with probability tending to $1$ as $n\to\infty$) the resulted shifted complex is the unique \emph{homology lex-segment} on $n$ vertices with the same homology as $X$, denoted by $\Delta(X,n)$ and defined next. 

Let $G=(V,E)$ be a fixed graph, and $|G|$ denote its geometric realization, namely its corresponding topological space. Denote: $c$ is the number of connected components of $G$, $|V|=m+c-1$ its number of vertices, $|E|=m+b-1$ its number of edges; thus $b=\beta_1(|G|)$, the first Betti number of $G$.
Let $K$ be a subdivision of $G$ on $n+c-1$ vertices. Then we have: the exterior algebraic shifting $\exshift K$ is the same over any field and 
is a homology lex-segment if and only if it equals 
$$\Delta(|G|,n+c-1) := [n+c-1] \bigcup \left \{A \in \binom{[n]}{2}\colon A \leq \{2,b+2\} \right \}.$$
Here and throughout the rest of the article the order $\leq$ stands for the lexicographic order.
Note that $\{2,b+2\}$ is the $n+b-1$th edge in the lex-order on $\binom{[n]}{2}$.

Now, let us denote by $S_g$ the orientable closed connected surface with genus $g$. For a triangulation $K$ of $S_g$ with $n$ vertices, where $n\ge 4+6g$, we have that its exterior algebraic shifting
$\exshift K$, over a field of characteristic zero,
is a homology lex-segment if and only if it equals 
$$\Delta(S_g, n):= [n] \bigcup \left \{\sigma\in \binom{[n]}{2}\colon A \leq \{4,4+6g\} \right \} \bigcup \left \{ A \in \binom{[n]}{3} \colon A \leq \{1,4,4+4g\} \right \} \bigcup \{\{2,3,4\}\}.$$

The exterior algebraic shifting of a non-orientable surface $N_g$ of genus $g$ depends on the characteristic of the field.
Let $K$ be a triangulation of $N_g$ with $n$ vertices, where $n\geq 4+3g$, then its exterior algebraic shifting $\Delta^{ex}_2(K)$ over a field of characteristic $2$ is a homology lex-segment if and only if it equals
$$\Delta_2(N_g, n):= [n] \bigcup \left \{A \in \binom{[n]}{2}\colon A \leq \{4,4+3g\} \right \} \bigcup \left \{ A \in \binom{[n]}{3} \colon A \leq \{1,4,4+2g\} \right \} \bigcup \{\{2,3,4\}\}.$$
On the other hand, its exterior algebraic shifting 
$\Delta_0^{ex}(K)$ over a field of characteristic zero
is a homology lex-segment if and only if it equals
$$\Delta_0(N_g, n):= [n] \bigcup \left \{A\in \binom{[n]}{2}\colon A \leq \{4,4+3g\} \right \} \bigcup \left \{ A\in \binom{[n]}{3} \colon A \leq \{1,4,5+2g\} \right \}.$$
We will however avoid explicitly mentioning the characteristic of the field except where it is necessary.

The following universality result is our main technical result, of independent interest. It says that for \emph{every} fixed surface triangulation $K$, all fine enough Delaunay triangulations of same surface do edge contract to $K$. Formally:
\begin{theorem}[Universality for edge contractions]
    \label{t:delaunayref}
    Let $S$ be a closed connected Riemannian surface and $K$ a triangulation of $S$. Then, there exists $\rho>0$ small enough such that if $P\subseteq S$ is a $\rho$-dense point set, locally in general position, then there exists a sequence of edge contractions from $\Del(P)$ to $K$ such that each intermediate complex triangulates $S$.
\end{theorem}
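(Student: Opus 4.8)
The plan is to reduce Theorem~\ref{t:delaunayref} to two self-contained statements and then combine them. \emph{Claim A:} for $\rho$ small enough — "small enough" depending on the injectivity and convexity radii and curvature bounds of $S$ together with the minimal metric feature size of $K$ — the Delaunay complex $\Del(P)$ is a geometric subdivision of a triangulation $\widetilde{K}$ of $S$ that is combinatorially isomorphic to $K$ and whose vertex set is a subset of $P$. \emph{Claim B:} whenever a triangulation $T$ of a closed surface subdivides a triangulation $L$ of the same surface, there is a sequence of edge contractions transforming $T$ into $L$, each satisfying the link condition $\lk(u)\cap\lk(v)=\lk(\{u,v\})$ and hence, by the standard topology-preservation criterion for edge contractions of $2$-manifolds, keeping every intermediate complex a triangulation of $S$. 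Granting these, the theorem follows by applying Claim B to $T=\Del(P)$ and $L=\widetilde{K}\cong K$; the input that $\Del(P)$ is itself an embedded triangulation of $S$ for $\rho$ small and $P$ locally in general position is exactly the content of \cite{leibon99-delaunay,dyer08-surface}.

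For Claim A, fix a geometric realization of $K$ inside $S$ with geodesic (or smooth, embedded) simplices, so that $K^{(1)}$ is an embedded graph whose complementary regions are open triangular disks. For each vertex $v_i$ of $K$ pick $p_i\in P$ within distance $\rho$ of $v_i$; for $\rho$ small these are distinct and have disjoint open stars in $\Del(P)$. For each edge $e=\{v_i,v_j\}$ of $K$ route a path in the $1$-skeleton of $\Del(P)$ from $p_i$ to $p_j$ staying inside a thin tube of radius $\delta$ around $e$, with $\rho\ll\delta\ll$ (feature size of $K$): since $\Del(P)$ is $O(\rho)$-fine, such a path is extracted from the sequence of Delaunay triangles crossed by the geodesic $e$. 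Arrange that the paths assigned to edges incident to a common $v_i$ leave $p_i$ along distinct Delaunay edges in roughly the correct directions, and are otherwise confined to pairwise disjoint tubes; then their union $L$ is an embedded subgraph of $\Del(P)$ isotopic in $S$ to $K^{(1)}$. Hence the closure of each complementary region of $L$ is an embedded disk bounded by a subdivided triangle of $K$, and $\Del(P)$ restricted to it is a triangulated disk; collapsing these subdivisions exhibits a curved triangulation $\widetilde{K}\cong K$ of which $\Del(P)$ is a subdivision.

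For Claim B, note every vertex of $T$ is a vertex of $L$, an interior vertex of a triangle of $L$, or an interior vertex of an edge of $L$. First remove all triangle-interior vertices, one triangle at a time: the part of $T$ inside a closed triangle $\overline f$ of $L$ is a triangulated disk with fixed boundary, and as long as it has an interior vertex it admits a link-condition edge contraction incident to an interior vertex — otherwise every such edge would lie in an empty $3$-cycle, impossible once the disk has more than boundedly many vertices (and the few remaining small cases are a direct check) — and such a contraction strictly decreases the number of interior vertices of $\overline f$ without disturbing other triangles. After this stage each triangle of $L$ is a polygon triangulation with no interior vertices. Second, remove edge-interior vertices: for an edge $e=\{u,v\}$ of $L$ carrying interior vertices $w_1,\dots,w_s$ in order, the edge $\{u,w_1\}$ satisfies the link condition (its link is the pair of apices of the two triangles on it, one in each adjacent polygon-triangulated disk, using that two consecutive boundary vertices of a polygon triangulation have a unique common neighbour), so contracting it replaces $w_1$ by $u$ and shortens $e$; iterate. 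When no interior or edge-interior vertices remain, each triangular region is a single $2$-simplex, so $T$ has become $L$.

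The main obstacle is Claim A, specifically the routing of $K^{(1)}$ through the $1$-skeleton of $\Del(P)$: one must keep the paths inside controlled tubes, keep them pairwise disjoint (the only real subtlety being near the shared vertices $p_i$, where one spends the local fan structure of the Delaunay star at $p_i$), and ensure their union is isotopic to $K^{(1)}$ so that the complementary regions are the expected triangular disks — all of which dictates how small $\rho$ must be relative to both the Riemannian geometry of $S$ and the fixed triangulation $K$. Claim B, by contrast, is purely combinatorial and surface-topological, resting only on the link-condition criterion and the elementary structure of polygon triangulations.
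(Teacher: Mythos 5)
Your high-level decomposition is the same as the paper's: (1) show that the Delaunay triangulation, after suitable manipulation, is a subdivision of a geometric copy of $K$, and (2) show that any simplicial subdivision of a surface triangulation can be reduced to it by link-condition edge contractions. Your Claim B is essentially the paper's Proposition~\ref{p:minor}, and your proof of it is correct and complete, including the observation that consecutive boundary vertices of a polygon triangulation without interior vertices have a unique common neighbour (which makes the link condition hold for $\{u,w_1\}$). That part is fine.

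The gap is in Claim A, and it is not a technical nuisance but a genuine obstruction. You assert that $\Del(P)$ \emph{is} a subdivision of a triangulation $\widetilde K\cong K$ whose vertex set lies in $P$, choosing a single point $p_i\in P$ for each vertex $v_i$ of $K$ and then routing the edges of $K$ as vertex-disjoint paths in the $1$-skeleton of $\Del(P)$ that leave $p_i$ along distinct Delaunay edges. For this to be a subdivision, the paths meeting at $p_i$ must be internally disjoint, so they must indeed start along distinct Delaunay edges at $p_i$; hence you need $\deg_{\Del(P)}(p_i)\ge \deg_K(v_i)$. Nothing in $\rho$-density or local general position gives you this: a Delaunay vertex on a surface can have degree as low as $3$, and degrees of randomly placed points concentrate around a small constant independent of $\rho$, while $K$ may have vertices of arbitrarily large degree. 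So for a fixed $K$ with a high-degree vertex, no choice of $p_i$ near $v_i$ need have enough incident Delaunay edges, and your routing breaks exactly at the place you flag as ``the only real subtlety.'' The paper sidesteps this by \emph{not} claiming $\Del(P)$ itself refines $K$: in Proposition~\ref{p:refinement} it first carves out a small disc $D$ around each $v$ (using the layered vertex cover and Lemma~\ref{l:vertexedge-containment} to discard diagonal edges while keeping $B_v$ interior), then \emph{contracts all interior edges} of $D$ to collapse it to a star $D_v$ whose apex is identified with $v$; the apex then has degree equal to $|\partial D_v|$, which grows like $\epsilon/\rho$ and so exceeds $\deg_K(v)$ for $\rho$ small. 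Only after these preliminary contractions is the resulting complex a subdivision of $K$, to which your Claim~B (the paper's Proposition~\ref{p:minor}) applies. To repair your argument you should weaken Claim A to ``$\Del(P)$ admits a sequence of edge contractions to a subdivision of $K$'' and insert this local contraction step around each $p_i$ before routing.
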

Being \emph{locally in general position} here means that within every small neighborhood, whose size depends on $S$, no point of $P$ is on a geodesic between other two points of $P$ and no four of its points lie on a circle.
Theorem~\ref{t:delaunayref} is used to prove the following asymptotic behavior of a random Delaunay triangulation, namely concentration for the exterior algebraic shifting.
\begin{theorem}[Concentration of exterior algebraic shifting for Random Delaunay on surfaces]\label{t:aasDelauney}
Let $S$ be a closed connected Riemannian surface. Then, for the random Delaunay model on $S$ there holds:
\begin{itemize}
    \item If $S$ is orientable then, a.a.s., the exterior algebraic shifting satisfies
$\Delta^{ex}(\Del(P_n))
    =\Delta(S,n)
    $.
    \item If $S$ is nonorientable and $p\in \{0,2\}$ then, a.a.s., 
the exterior algebraic shifting over any fixed field of characteristic $p$ satisfies $\Delta^{ex}_p(\Del(P_n)) 
    =\Delta_p(S,n)$,
    where $\Delta_p$ stands for the exterior algebraic shifting operation over a field of characteristic $p$.
\end{itemize}

\end{theorem}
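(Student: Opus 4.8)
The plan is to pin down $\exshift{\Del(P_n)}$ by squeezing it between a \emph{deterministic} lower bound that holds for every $n$-vertex triangulation of $S$, and an a.a.s.\ \emph{upper} bound that is special to the Delaunay model and is obtained by using Theorem~\ref{t:delaunayref} to transport the computation down to a triangulation of $S$ of bounded size. Throughout, ``$\preceq$'' denotes the componentwise lexicographic order on shifted complexes (compare, in each dimension, the sorted list of faces), so that $\Delta(S,n)$ being a \emph{homology lex-segment} means it is the $\preceq$-minimum among shifted complexes with its $f$-vector and Betti numbers.

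\textbf{The lower bound.} Exterior algebraic shifting preserves the $f$-vector and all Betti numbers over the field in use, so $\exshift{\Del(P_n)}$ is a shifted complex whose $f$-vector and Betti numbers are those of an arbitrary $n$-vertex triangulation of $S$. By the Björner--Kalai-type numerical characterization of homology lex-segments — they are exactly the $\preceq$-smallest shifted complexes with prescribed $f$-vector and Betti numbers — we get $\exshift{K}\succeq \Delta(S,n)$ for \emph{every} triangulation $K$ of $S$ on $n$ vertices, hence a.a.s.\ for $\Del(P_n)$. What must be checked here is that the explicit complex written down — the initial lex-segments of edges and of triangles, the lone triangle $\{2,3,4\}$ carrying the top class, and (over characteristic $2$) the extra faces forced by $\beta_1(N_g;\F_2)$ and the nonorientable fundamental class — really is that minimum; this is the combinatorial content of the ``homology lex-segment'' terminology and is exactly where the dependence on the characteristic of the field enters, producing $\Delta_p(S,n)$ in the nonorientable case.

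\textbf{The upper bound.} Fix once and for all a triangulation $K_0$ of $S$ of bounded size (chosen so that the base-case computation below works), and let $\rho>0$ be as supplied by Theorem~\ref{t:delaunayref} for $K_0$. A routine covering estimate gives that a.a.s.\ $P_n$ is $\rho$-dense, and local general position holds with probability $1$ inside each member of a fixed finite cover of $S$ by small balls, hence a.a.s.\ globally by a union bound. So a.a.s.\ Theorem~\ref{t:delaunayref} yields a chain of edge contractions
\[
\Del(P_n)=K^{(n)}\;\searrow\;K^{(n-1)}\;\searrow\;\cdots\;\searrow\;K^{(n_0)}=K_0,
\]
each $K^{(i)}$ triangulating $S$; read upward, $\Del(P_n)$ is built from $K_0$ by vertex splits staying among triangulations of $S$. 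The engine is then a comparison statement for $\exshift{\cdot}$ under such a move: a legal edge contraction $K\searrow K'$ of triangulations of $S$ satisfies $\exshift{K}\preceq \Phi(\exshift{K'})$, where $\Phi$ adjoins to a shifted complex on $[m]$ the new vertex $m{+}1$ together with the lexicographically smallest three edges and two triangles needed to restore the face numbers of an $(m{+}1)$-vertex surface triangulation; one checks directly that $\Phi(\Delta(S,m))=\Delta(S,m+1)$ (the three new edges being $\{1,m{+}1\},\{2,m{+}1\},\{3,m{+}1\}$ and the two new triangles $\{1,2,m{+}1\},\{1,3,m{+}1\}$), and likewise for $\Delta_p$. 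Granting the comparison lemma and the base case $\exshift{K_0}=\Delta(S,n_0)$ — a finite computation on the fixed complex $K_0$, to be arranged by choosing $K_0$ conveniently, if necessary reducing it by one more contraction to a rigid ``seed'' complex whose shifting is found by hand — induction along the chain gives $\exshift{\Del(P_n)}\preceq \Phi^{\,n-n_0}(\Delta(S,n_0))=\Delta(S,n)$ a.a.s.; combined with the lower bound, equality a.a.s. The nonorientable cases are identical after replacing $K_0$, the face-number bookkeeping in $\Phi$, and the base-case computation by their analogues over a field of characteristic $0$ or $2$.

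\textbf{The main obstacle.} The crux is the comparison inequality $\exshift{K}\preceq\Phi(\exshift{K'})$ for a legal edge contraction of surface triangulations. Exterior algebraic shifting is monotone under passing to subcomplexes, but an edge contraction is not an inclusion, so a new argument is required; the natural route is through the rigidity/hyperconnectivity description of the low rows of $\exshift{\cdot}$. For the edge rows one expects to use that the graph of a surface triangulation is generically $3$-rigid (Fogelsanger) and that vertex splitting preserves generic $3$-rigidity (Whiteley), which should force the three full edge rows and the placement of the remaining $6g$ (resp.\ $3g$) edges into row $4$ to persist up the chain unconditionally. The harder part is the $2$-dimensional analogue controlling the triangle rows — a generic acyclicity/hyperconnectivity condition on the $2$-skeleton together with the single homology-carrying triangle — which must be shown both to be preserved under vertex splits within surface triangulations and to match the homology lex-segment exactly; this is also where one must be careful that the genuinely Delaunay configuration, available only at the top $K^{(n)}$ of the chain, supplies any genericity that is not automatic for the merely simplicial intermediate complexes $K^{(i)}$, so that the induction is not broken by a non-generic split low in the chain. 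A secondary nuisance is establishing the base case uniformly in the genus, for which one may either exhibit an explicit small family of triangulations whose shifting is computed directly, or iterate the same comparison lemma down to a single seed complex.
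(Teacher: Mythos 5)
Your overall skeleton matches the paper's: transport down the edge-contraction chain from $\Del(P_n)$ to a fixed small triangulation $K_0$, establish the result at $K_0$, and propagate it back up by a preservation statement for vertex splits. But both of the pieces you flag as ``granting'' are the real content, and your sketches of them have gaps.

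For the propagation step you posit a strong monotone inequality $\exshift{K}\preceq\Phi(\exshift{K'})$. The paper does not prove — and does not need — anything that strong. What it uses are two independent facts about a vertex split $K'\to K$ on surfaces, cited from prior work: Proposition~\ref{p:splittail}, which says the tails $|\tail(\exshift{K},\{m{+}1,m{+}2\})|$ and $|\tail(\exshift{K},\{1,m{+}1,m{+}2\})|$ can only shrink when passing to $K'$ (so no new ``bad'' faces with smallest vertex $\ge 5$ appear), and Proposition~\ref{p:splitvolume}, which says $\{1,3,n\}\in\exshift{K}$ persists as $\{1,3,n{+}1\}\in\exshift{K'}$. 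Corollary~\ref{c:splithlex} then combines these with a counting argument (the tails above $\{4,5\}$ and $\{1,4,5\}$ become totally ordered and of the correct size, hence equal to the corresponding lex-segment tails) to show the homology lex-segment \emph{property} is preserved — no lower-bound sandwich is needed, since a homology lex-segment is uniquely determined by its $f$- and Betti vectors, which shifting preserves. Your route through rigidity theory points at the right intuition for the edge rows, but the triangle-row claim needs the specific propositions above, and your conjectured $\Phi$-inequality is not established (nor is it what is actually used). Your worry that genericity might fail ``low in the chain'' is spurious: Propositions~\ref{p:splittail} and~\ref{p:splitvolume} are purely combinatorial statements about arbitrary surface triangulations related by a vertex split; no Delaunay or geometric genericity is invoked for the intermediate complexes.

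For the base case, ``choose $K_0$ conveniently'' hides real work, and it scales with the genus. The paper handles it via Corollary~\ref{c:hlextriang}: it starts from two explicit computed examples (a $10$-vertex torus triangulation and a $7$-vertex projective-plane triangulation from~\cite{keehn24-exterior} whose shiftings are homology lex-segments) and then uses Corollary~\ref{c:gluehomlex} — a consequence of Theorem~\ref{t:simplex-union} on shifting a union along a simplex — to glue $g$ copies along $2$-faces, producing a seed for every genus with the homology lex-segment property. Without this connected-sum step your base case is only ``arranged'' for bounded genus. Finally, a small remark on your lower bound: the $\preceq$-minimality characterization of homology lex-segments is plausible, but the paper's argument avoids relying on it, instead using uniqueness given $(f,\beta)$; if you want to keep the sandwich framing you must actually prove the minimality claim, which the paper's Appendix~\ref{apx:lex} does not quite state in that form.
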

For uniform random triangulations of a one dimensional compact topological space we show the following concentration for the exterior algebraic shifting.
\begin{theorem}[Concentration of exterior algebraic shifting for uniform triangulations in dimension 1]\label{t:aasUniform1-dim}
   Let $G$ be a finite graph. Then, a.a.s., the exterior algebraic shifting over any fixed field satisfies $\exshift {U_n(|G|)}
   = \Delta(|G|,n)$.
\end{theorem}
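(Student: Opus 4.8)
The plan separates a deterministic structural statement from an elementary probabilistic one. The \textbf{structural claim} is that there is a constant $\ell=\ell(G)$ such that whenever $K$ is a subdivision of $G$ in which every edge of $G$ is subdivided into a path with at least $\ell$ edges, then $\exshift{K}=\Delta(|G|,|V(K)|)$. The \textbf{probabilistic claim} is that a.a.s.\ $U_n(|G|)$ satisfies this hypothesis: it distributes $n-|V(G)|$ subdivision vertices among the fixed finite edge set $E(G)$, and a stars-and-bars estimate shows that a.a.s.\ each of the $|E(G)|$ edges receives at least $\ell$ of them (indeed $\Theta(n)$). I regard the probabilistic claim as routine in any of the standard models for a uniform $n$-vertex refinement, so all the work is in the structural claim.

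For the structural claim, work over $\R$ (exterior algebraic shifting of a $1$-dimensional complex is field-independent). First reduce to connected $G$: if $G=G_1\sqcup\dots\sqcup G_c$ then $K$ is the disjoint union of subdivisions $K_i$ of the $G_i$, each on a vertex set growing with $n$; Theorem~\ref{t:simplex-union} computes $\exshift{K}$ from the $\exshift{K_i}$ (a disjoint union being a union along the empty simplex), and one verifies from the definitions that $\Delta(|G|,\cdot)$ is produced from the $\Delta(|G_i|,\cdot)$ by the same operation. So assume $G$ connected; put $b=\beta_1(|G|)$ and $N=|V(K)|$. Since shifting preserves face numbers and Betti numbers, $\exshift{K}$ is a shifted graph on $N$ vertices with $N+b-1$ edges and $\beta_0=1$; shiftedness forces every edge $\{1,j\}$ into it, and there are exactly $N+b-1$ edges lexicographically at most $\{2,b+2\}$, so $\exshift{K}=\Delta(|G|,N)$ if and only if $\{2,b+2\}\in\exshift{K}$, equivalently $\{3,4\}\notin\exshift{K}$.

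Now decide whether $\{3,4\}\in\exshift{K}$. Let $f_1,\dots,f_N$ be a generic basis of the space of linear forms, and $(I_K)_2\subseteq\Lambda^2(\R^N)$ the span of $e_k\wedge e_l$ over the non-edges of $K$. Then $\{3,4\}\in\exshift{K}$ iff $f_3\wedge f_4\notin\langle f_1,f_2\rangle\wedge\R^N+(I_K)_2$, where $\langle f_1,f_2\rangle\wedge\R^N$ denotes the span of all $f_1\wedge v$ and $f_2\wedge v$. As $f_3\wedge f_4$ is a generic decomposable $2$-vector, it belongs to this subspace precisely when the subspace is all of $\Lambda^2(\R^N)$; and passing to the quotient by $\langle f_1,f_2\rangle\wedge\R^N$, which identifies $\Lambda^2(\R^N)/(\langle f_1,f_2\rangle\wedge\R^N)$ with $\Lambda^2(\R^{N-2})$, this becomes the requirement that $\{\,\bar e_k\wedge\bar e_l:\{k,l\}\ \text{a non-edge of}\ K\,\}$ span $\Lambda^2(\R^{N-2})$, where $\bar e_1,\dots,\bar e_N\in\R^{N-2}$ are $N$ points in general position. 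Equivalently: the \emph{complement} graph $\overline K$ must be spanning in the second-exterior matroid of a generic point configuration in $\R^{N-2}$. Hence the structural claim reduces to verifying this single spanning property when each edge of $G$ is subdivided at least $\ell$ times. I would do this using the long-path structure, either by an explicit degeneration --- place the vertices of each subdivided edge on a short arc of a moment-type curve, with the $b$ independent cycles of $G$ arranged to carry the coordinate directions $2,\dots,b+1$, and compute the rank directly (a path of length $\ge\ell(G)$ being long enough to host the configuration) --- or by induction on $b$ along a spanning-tree-plus-ears decomposition, in which Theorem~\ref{t:simplex-union} realizes the attachment of a long pendant path as a union along a $0$-simplex and thereby reduces the addition of each ear to the addition of one edge joining the far endpoint of a long pendant path to an old vertex, whose effect one then tracks. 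The base cases $b\le 2$ are immediate, since a connected shifted graph with $\beta_1\le 2$ and the prescribed face numbers is forced to be the homology lex-segment.

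The main obstacle is precisely this last step: controlling the generic-flag rank in the exterior face ring of an arbitrary long subdivision --- equivalently, showing that $\overline K$ spans the exterior matroid, or that each long ear behaves as required. The hypothesis ``long enough'' is genuinely necessary, not an artifact: Example~\ref{ex:graph} exhibits short subdivisions whose exterior shift differs from the homology lex-segment, so the threshold $\ell(G)$ must enter the computation in an essential way. By contrast the reduction to connected $G$, the reduction to the single edge $\{2,b+2\}$, and the probabilistic estimate are all routine.
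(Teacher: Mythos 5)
Your reduction is in the right spirit—passing to the single statement $\{3,4\}\notin\exshift K$, and translating it into a spanning condition for the complement $\overline K$ in the generic exterior matroid of $\Lambda^2(\R^{N-2})$—but the proposal leaves the crucial structural claim unproved, and you acknowledge it yourself as ``the main obstacle.'' That obstacle is exactly the point where the paper does its work, and it does so by an entirely different and much lighter route. Proposition~\ref{p:graphsdlex} shows directly that $\exshift{\sd G}$, the barycentric subdivision (each edge subdivided \emph{exactly once}), is already a homology lex-segment: if $\{3,4\}\in\exshift{\sd G}$ there would be a $2$-hypercycle subgraph $H\subseteq\sd G$, which by Kalai's hyperconnectivity theorem is $3$-edge-connected; but every edge of $\sd G$ has a degree-$\le 2$ endpoint among the new vertices, so no such $H$ exists. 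Then Proposition~\ref{p:splittail} (monotonicity of the tail size under vertex-splits) propagates this to any further subdivision: once every edge of $G$ is subdivided at least once, the complex is a vertex-split descendant of $\sd G$ and its tail at $\{3,4\}$ can only stay empty. No rank computation, moment curve, or ear decomposition is needed.

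Consequently your diagnosis of what the threshold $\ell(G)$ should be is off. You read Example~\ref{ex:graph} as showing that short subdivisions fail and hence that $\ell(G)$ ``must enter the computation in an essential way.'' In fact the example's obstruction is that some edges (those of the embedded $K_4$) receive \emph{no} subdivision vertices at all; the single edge $vu$ being finely subdivided is irrelevant. The correct threshold is $\ell=2$ (every edge split at least once), which also makes the probabilistic step completely trivial—just the balls-and-bins estimate that no bin is empty—whereas a large $\ell$ would buy you nothing and costs you nothing either. Finally, the preliminary reduction to connected $G$ is unnecessary here: the hyperconnectivity argument in Proposition~\ref{p:graphsdlex} concerns arbitrary subgraphs of $\sd G$, so it handles disconnected $G$ without a separate appeal to Theorem~\ref{t:simplex-union}. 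In summary: the linear-algebraic reformulation you set up is correct, but the spanning claim on which everything rests is not established, and the proof as written is incomplete; the paper's hyperconnectivity-plus-monotonicity argument is both complete and substantially simpler.
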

Note that the probabilistic conclusion in this theorem can not be upgraded to a deterministic one:
\begin{example}\label{ex:graph}
Let $G$ be the graph on 5 vertices containing $K_4$ and 7 edges, denote by $v$ its vertex of degree one, and by $u$ the neighbor of $v$. Let $R$ be the $n$ vertex refinement of $G$ obtained by subdividing the edge $vu$ by $n-5$ new vertices. Then $K_4$ is a subgraph of $R$ hence the edge $\{3,4\}\in \exshift R$. But by Theorem~\ref{t:aasUniform1-dim}  a.a.s. $\{3,4\}\notin \exshift H$ for a uniform $n$ vertex refinement $H$ of $G$.
\end{example}

Theorems~\ref{t:aasUniform1-dim} and~\ref{t:aasDelauney} lead to the following natural conjecture.
\begin{conjecture}\label{conj:uniform_surface}
For every fixed genus $g$, a.a.s., the exterior algebraic shifting over any fixed field satisfies
     $$\exshift {U_n(S_g)}
     =\Delta(S_g,n)\,.$$
\end{conjecture}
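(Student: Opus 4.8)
The plan is to reduce Conjecture~\ref{conj:uniform_surface} to a uniform-model analogue of Theorem~\ref{t:delaunayref} and then re-run the argument behind Theorem~\ref{t:aasDelauney}. All $n$-vertex triangulations of $S_g$ share the same $f$-vector and the same Betti numbers, and, by the numerical characterization recalled in the introduction, $\Delta(S_g,n)$ is the \emph{unique} homology lex-segment with those parameters; since exterior shifting preserves both the $f$-vector and the Betti numbers, it suffices to show that, a.a.s., $\exshift{U_n(S_g)}$ is a lex-segment in each dimension. The proof of Theorem~\ref{t:aasDelauney} establishes exactly this for the Delaunay model by combining (i) Theorem~\ref{t:delaunayref}, i.e.\ that a fine enough triangulation edge-contracts, through triangulations of the surface, to a fixed reference triangulation $K_0$, with (ii) a monotonicity principle relating $\exshift$ before and after an edge contraction, which, iterated over the $n-n_0$ vertex splits that rebuild $U_n(S_g)$ from $K_0$, squeezes the shifting onto the homology lex-segment forced by the fixed topology. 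Step (ii) is purely combinatorial–algebraic and transfers verbatim, so the whole content of the conjecture is concentrated in proving (i) for a \emph{uniform} random triangulation.

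Thus the main step I would carry out is: show that, a.a.s., $U_n(S_g)$ admits a sequence of edge contractions to a fixed triangulation $K_0$ of $S_g$ with every intermediate complex triangulating $S_g$. In the Delaunay setting this is powered by metric rigidity — a $\rho$-dense, locally generic point set forces the triangulation to look Euclidean and generic on small scales, so one can greedily locate a contractible edge (one whose endpoints' common link is exactly the two shared triangles) and recurse. The uniform model has no metric to exploit, so I would replace geometric rigidity by structural results on random triangulations of a fixed surface: a.a.s.\ $U_n(S_g)$ locally resembles a large planar triangulation (a local-weak-limit, UIPT-type statement away from the finitely many handles), contains no small separating or non-contractible cycles beyond the unavoidable genus-$g$ ones, and in particular has linearly many contractible edges in every macroscopic region. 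Given such structure, one contracts contractible edges one at a time, keeping the complex a triangulation of $S_g$ and using contraction-invariance of genus and orientability to control the topology; the delicate point is steering the contractions so as to terminate at the \emph{prescribed} $K_0$ rather than at merely \emph{some} minimal triangulation. I would handle this either by first contracting down to bounded size and then moving within the (connected, by classical results) graph of genus-$g$ triangulations to $K_0$, or — cleaner — by choosing $K_0$ flexibly, e.g.\ as a canonical minimal-vertex genus-$g$ triangulation, so that reaching it becomes automatic.

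The hard part will be exactly this uniform-model universality, for two reasons. First, one needs a quantitative "many contractible edges everywhere, simultaneously" statement for $U_n(S_g)$; although plausible from the theory of random maps, promoting a local-limit description to the global existence of a full contraction sequence down to bounded size is nontrivial and must handle the bounded number of handles, which are precisely where contractibility obstructions concentrate. Second, and more seriously, monotone reachability of a fixed target is genuinely stronger than connectivity of the contraction/expansion graph: one must never be forced to expand. Establishing this — either by proving a true analogue of Theorem~\ref{t:delaunayref} (the random triangulation contracts to \emph{every} sufficiently small genus-$g$ triangulation), or by an inductive vertex-by-vertex peeling that preserves a suitable invariant — is, I expect, the crux, and is presumably why the statement is posed as a conjecture rather than a theorem.
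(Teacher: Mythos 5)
The statement you were asked to prove is posed in the paper as a \emph{conjecture}, and the paper offers no proof of it, so there is no target argument to compare against. Your proposal does correctly identify the natural reduction: exterior shifting preserves face and Betti numbers, every $n$-vertex triangulation of $S_g$ has the same such data, and that data determines the homology lex-segment complex uniquely (Lemma~\ref{l:homlexsegment}); hence it suffices to show that a.a.s.\ $\exshift{U_n(S_g)}$ is a homology lex-segment, which in turn would follow (via Corollaries~\ref{c:hlextriang} and~\ref{c:splithlex}, exactly as in the proof of Theorem~\ref{t:aasDelauney}) from showing that a.a.s.\ $U_n(S_g)$ admits an edge-contraction sequence, through triangulations of $S_g$, down to a fixed reference triangulation whose shifting is already a homology lex-segment. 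Up to this point your plan matches the paper's intent. One small imprecision: $\Delta(S_g,n)$ is a \emph{homology} lex-segment but not a lex-segment in each dimension (for $g\ge 1$ the $2$-face $\{2,3,4\}$ lies outside the initial segment ending at $\{1,4,4+4g\}$), though the rest of your argument uses the correct notion.

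Where you and the authors diverge is on how to attack the remaining, genuinely open step. You propose replacing the Delaunay model's metric rigidity with structural results on uniform random maps: local-weak-limit/UIPT-type behaviour of $U_n(S_g)$, abundance of contractible edges, and care around the handles. The paper's concluding remarks instead pose a different auxiliary problem immediately after Conjecture~\ref{conj:uniform_surface}: find a Riemannian metric on $S_g$ such that for every fixed $\rho>0$, a.a.s.\ $U_n(S_g)$ embeds with all edge lengths below $\rho$, which would feed the uniform model directly into the Section~3 Riemannian machinery behind Theorem~\ref{t:delaunayref}. The authors also point to a deterministic avenue via barycentric subdivisions (Conjecture~\ref{conj:br(surface)}, conditional on Conjecture~\ref{conj:area-rig}). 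Neither your route nor theirs is carried out anywhere, and your honest assessment of the crux — establishing the uniform-model universality, both the global contraction sequence and the monotone reachability of a prescribed target without ever expanding — accurately reflects why this remains a conjecture rather than a theorem.
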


{\bf Outline.} 
In Section~\ref{sec:Prelim} we provide preliminaries on exterior algebraic shifting and on Delaunay triangulations; 
in Section~\ref{sec:Universality} we prove Theorem~\ref{t:delaunayref};  in Section~\ref{sec:assDelaunay} we prove Theorem~\ref{t:aasDelauney}; in Section~\ref{sec:ass1dim} we prove Theorem~\ref{t:aasUniform1-dim}; in Section~\ref{sec:Conclude} we discuss concentration for triangulations of other spaces and some open problems, and in Appendix \ref{apx:lex} we discuss homology lex-segment complexes, which may be of independent interest.

\section{Preliminaries}\label{sec:Prelim} 

\subsection{Algebraic shifting}
We recall Kalai's definition~\cite{KalaiPart1}.
Let $K$ be a simplicial complex with vertex set $N$, $\F$ be an field extension of a base field $\F'$ of transcendence degree at least $n^2$, and consider the exterior algebra mod the ideal generated by non-faces $\bigwedge K = \bigwedge \F^{N}/\SPAN \{e_A \colon A \notin K\}$, called the  \emph{exterior face ring of $K$}. Here the $e_i$'s form the standard basis of $\F^{N}$, and $e_A$ stands for the exterior product $e_{a_1}\wedge\ldots\wedge e_{a_k}$ where $A=\{a_1<\ldots<a_k\}$.
Take $(f_i)_{i\in N}$ a generic change of basis of $\F^{N}$, namely all $n^2$ entries of the transition matrix are algebraically independent over the base field $\F'$. Then, the \emph{exterior algebraic shifting of $K$} is the simplicial complex given by $$\exshift K = \{B \subseteq N \colon \overline{f_B} \notin \SPAN_{\F} \{\overline{f_A}\colon A<B\}\},$$
where $f_A=f_{a_1}\wedge\ldots\wedge f_{a_k}$ and its bar $\overline{f_A}$ denotes its image in $\bigwedge K$.

We now recall how to compute the exterior algebraic shifting of a union of a pair of simplicial complexes along a simplex. For it, for a set $T=\{t_1<\cdots <t_r\}\subseteq N$ set $D_K(T) = |\{ t \in N \colon t_r<t,~T\cup \{t\} \in \exshift{K} \}|$ to be the number of single vertex extensions of $T$ having $T$ as the smallest member of their shadow. We will make use of the following result~\cite[Theorem 4.6]{nevo05-basic}.
\begin{theorem}
  \label{t:simplex-union}
  Let $K$ and $L$ be simplicial complexes, with vertex set $(K\cup L)_0 = N$, intersecting along a simplex $K\cap L = \langle B\rangle = \{A \colon A\subseteq B\}$. Let $T=\{t_1<\cdots <t_r\}\subseteq N$, then
  $$T\in \exshift{K\cup L} \iff t_r-t_{r-1}\leq D_K(T\setminus t_r)+D_L(T\setminus t_r)-D_{\langle B \rangle}(T\setminus t_r).$$
  
\end{theorem}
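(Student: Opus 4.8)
\emph{Reduction to a numerical identity.} Because $\exshift{K\cup L}$ is shifted, for every $T'\subseteq N$ with $\max T'=s$ the set $\{t>s\colon T'\cup\{t\}\in\exshift{K\cup L}\}$ is an initial interval $\{s+1,\dots,s+D_{K\cup L}(T')\}$. With $T'=T\setminus t_r=\{t_1<\dots<t_{r-1}\}$ this says $T\in\exshift{K\cup L}$ if and only if $t_r-t_{r-1}\le D_{K\cup L}(T')$, so the plan is to prove the identity
\[
D_{K\cup L}(T')=D_K(T')+D_L(T')-D_{\langle B\rangle}(T').
\]
(For $|T|=1$ set $t_0=0$ and $D_M(\emptyset)=|M_0|$; the identity becomes $|(K\cup L)_0|=|K_0|+|L_0|-|B|$. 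If $T'\notin\exshift{K\cup L}$ then, by monotonicity of exterior shifting, $T'$ lies in none of $\exshift K,\exshift L,\exshift{\langle B\rangle}$ and all four quantities vanish.)

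\emph{Mayer--Vietoris and a rank count.} The decomposition $K\cup L=K\cup_{\langle B\rangle}L$ gives a short exact sequence of exterior face rings
\[
0\longrightarrow\bigwedge(K\cup L)\xrightarrow{\ \iota\ }\bigwedge K\oplus\bigwedge L\xrightarrow{\ \pi\ }\bigwedge\langle B\rangle\longrightarrow 0,
\]
where $\iota$ is induced by the two quotient maps onto $\bigwedge K$ and $\bigwedge L$ and $\pi$ by the difference of the further quotients onto $\bigwedge\langle B\rangle$; it is exact because a set is a non-face of $K\cup L$ exactly when it is a non-face of both $K$ and $L$, and crucially all four rings use the \emph{same} generic basis $(f_i)_{i\in N}$, so $\iota,\pi$ respect the classes $\bar f_A$. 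Fix $r=|T|$ and put $\mathcal{I}_1:=\{A\in\binom{N}{r}\colon A<T'\cup\{t_{r-1}+1\}\}$, $\mathcal{I}_2:=\{A\in\binom{N}{r}\colon A\le T'\cup\{n\}\}$; these are initial lex-segments and $\mathcal{I}_2\setminus\mathcal{I}_1$ is precisely the set $\{T'\cup\{t\}\colon t_{r-1}<t\le n\}$ of single-vertex extensions of $T'$. By the greedy nature of column-rank, $|\mathcal{I}\cap\exshift M|=\dim_\F V^M_{\mathcal{I}}$ for any initial lex-segment $\mathcal{I}$, where $V^M_{\mathcal{I}}:=\SPAN\{\bar f_A\colon A\in\mathcal{I}\}\subseteq(\bigwedge M)_r$; hence $D_M(T')=\dim V^M_{\mathcal{I}_2}-\dim V^M_{\mathcal{I}_1}$. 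Now injectivity of $\iota$ gives $\ker\phi^{K\cup L}_{\mathcal{I}}=\ker\phi^{K}_{\mathcal{I}}\cap\ker\phi^{L}_{\mathcal{I}}$ for the coordinate maps $\phi^M_{\mathcal{I}}\colon\F^{\mathcal{I}}\to(\bigwedge M)_r$, while surjectivity of $\bigwedge K\to\bigwedge\langle B\rangle$ and of $\bigwedge L\to\bigwedge\langle B\rangle$ gives $\ker\phi^{K}_{\mathcal{I}}+\ker\phi^{L}_{\mathcal{I}}\subseteq\ker\phi^{\langle B\rangle}_{\mathcal{I}}$; an inclusion--exclusion then yields
\[
\dim V^{K\cup L}_{\mathcal{I}}=\dim V^{K}_{\mathcal{I}}+\dim V^{L}_{\mathcal{I}}-\dim V^{\langle B\rangle}_{\mathcal{I}}-\delta(\mathcal{I}),\qquad\delta(\mathcal{I}):=\dim\big(\ker\phi^{\langle B\rangle}_{\mathcal{I}}\big/(\ker\phi^{K}_{\mathcal{I}}+\ker\phi^{L}_{\mathcal{I}})\big).
\]
Subtracting this at $\mathcal{I}_2$ and at $\mathcal{I}_1$ reduces the identity to $\delta(\mathcal{I}_1)=\delta(\mathcal{I}_2)$.

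\emph{The main step.} It remains to show that the syzygy defect $\delta$ is unchanged when one enlarges $\mathcal{I}_1$ by the block of single-vertex extensions of $T'$; I expect this to be the crux. The key input is the elementary computation $\exshift{\langle B\rangle}=\langle[\,|B|\,]\rangle$ (a full simplex on the first $|B|$ vertices, and nothing more), which renders the syzygies of $\bigwedge\langle B\rangle$ along an initial lex-segment fully explicit: for each $A\in\mathcal{I}$ with $A\not\subseteq[\,|B|\,]$ there is exactly one, supported on $A$ together with lex-earlier $r$-subsets of $[\,|B|\,]$. One then shows, by induction along the lex order and using the direct-sum decomposition $(\bigwedge(K\cup L))_r=(\bigwedge\langle B\rangle)_r\oplus P\oplus Q$ of the standard-monomial basis---where $P=\SPAN\{\bar e_S\colon S\in L\setminus\langle B\rangle\}=\ker\big(\bigwedge(K\cup L)\to\bigwedge K\big)$ and $Q=\SPAN\{\bar e_S\colon S\in K\setminus\langle B\rangle\}=\ker\big(\bigwedge(K\cup L)\to\bigwedge L\big)$---that every such syzygy of $\bigwedge\langle B\rangle$ lifts to a sum of a syzygy of $\bigwedge K$ and a syzygy of $\bigwedge L$ supported on the same initial lex-segment. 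It is precisely at this lifting step that genericity of the $f_i$ (algebraic independence of the transition-matrix entries) is used, guaranteeing that the image of $V^{K\cup L}_{\mathcal{I}}$ under the projection onto $P\oplus Q$ meets $P$ and $Q$ as transversally as possible. Granting the lifting statement gives $\delta(\mathcal{I}_1)=\delta(\mathcal{I}_2)$, hence the theorem.
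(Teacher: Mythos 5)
The paper itself does not prove this theorem: it is quoted with a citation to Nevo~\cite[Theorem~4.6]{nevo05-basic}, so there is no in-paper proof to compare your attempt against; what follows is an assessment of your argument on its own terms.

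Your setup and reduction are correct. Shiftedness of $\exshift{K\cup L}$ does reduce the claim to the identity $D_{K\cup L}(T')=D_K(T')+D_L(T')-D_{\langle B\rangle}(T')$ with $T'=T\setminus t_r$; the sequence $0\to\bigwedge(K\cup L)\to\bigwedge K\oplus\bigwedge L\to\bigwedge\langle B\rangle\to 0$ is exact (check it on the standard monomial bases, using $K\cap L=\langle B\rangle$); $\ker\phi^{K\cup L}_{\mathcal I}=\ker\phi^K_{\mathcal I}\cap\ker\phi^L_{\mathcal I}$ follows from injectivity of $\iota$; and the inclusion--exclusion formula with the defect $\delta(\mathcal I)$ is a valid dimension count, so the theorem is indeed equivalent to $\delta(\mathcal I_1)=\delta(\mathcal I_2)$.

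The ``main step'', however, is a genuine gap, and the sketch you offer to close it is actually false. You argue that every syzygy of $\bigwedge\langle B\rangle$ supported on an initial lex-segment lifts to a sum of a $K$-syzygy and an $L$-syzygy on the same segment; this would force $\delta(\mathcal I)=0$ for \emph{every} initial lex-segment $\mathcal I$. That is not the case. Take $|B|=1$ (two complexes glued at a single vertex), let $K$ and $L$ each contain an edge, and take $\mathcal I=\{\{1,2\}\}$ in degree $r=2$. Then $(\bigwedge\langle B\rangle)_2=0$, so $\ker\phi^{\langle B\rangle}_{\mathcal I}$ is one-dimensional, while $\bar f_{\{1,2\}}$ is generically nonzero in both $\bigwedge K$ and $\bigwedge L$, so $\ker\phi^K_{\mathcal I}=\ker\phi^L_{\mathcal I}=0$ and $\delta(\mathcal I)=1$. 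The true statement is only the specific equality $\delta(\mathcal I_1)=\delta(\mathcal I_2)$ for the two lex-segments that bracket the block $\{T'\cup\{t\}\colon t_{r-1}<t\le n\}$, and proving that the defect is stable under adding exactly this block is precisely the hard content of the theorem. Your reduction cleanly isolates this equality but does not prove it, and the lifting argument you gesture at would prove the strictly stronger (false) vanishing of $\delta$, so it cannot be repaired without a genuinely different idea that exploits the special structure of $\mathcal I_1$ and $\mathcal I_2$.
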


\begin{corollary}
  \label{c:gluehomlex}
  Let $K$ and $K'$ be triangulations of surfaces $S$ and $S'$ resp., intersecting in  
  a single $2$-face $B$, i.e., $K\cap K' = \langle B\rangle$.
  Then, $(K\cup K')\setminus \{B\}$ is a triangulation of the connected-sum surface $(S\cup S) \setminus B$.
  Moreover, if $\exshift K$ and $\exshift{K'}$ are homology lex-segment complexes then $\exshift{(K\cup K') \setminus \{ B \} }$ is a homology lex-segment complex as well.
\end{corollary}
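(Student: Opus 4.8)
We treat separately the topological claim and the claim about exterior shifting.

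\emph{Topology.} Since $K\cap K'=\langle B\rangle$, both $K\cup K'$ and $(K\cup K')\setminus\{B\}$ are simplicial complexes. Geometrically $|B|$ is a tame closed $2$-disk in each of the closed surfaces $|K|$ and $|K'|$, and $|(K\cup K')\setminus\{B\}|$ is obtained from $|K|$ and $|K'|$ by deleting the two open disks $\intr|B|$ and identifying the resulting boundary circles $\partial|B|$; this is, by definition of connected sum and its independence of the chosen disks, homeomorphic to $S\#S'$ (and indeed $\chi((K\cup K')\setminus\{B\})=\chi(S)+\chi(S')-2$).

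\emph{Reducing the hypothesis and computing $\exshift{K\cup K'}$.} Put $a=f_0(K)$, $a'=f_0(K')$, so $n:=f_0(K\cup K')=a+a'-3$ (the hypothesis forces $a$ and $a'$, hence $n$, to be large enough for the characterizations below to apply). By the numerical characterization of homology lex-segments of surfaces recalled in the introduction, the hypothesis says that $\exshift K$ and $\exshift{K'}$ are the explicit shifted complexes $\Delta(S,a)$ and $\Delta(S',a')$ (over the field in question). Consequently the functions $D_K$ and $D_{K'}$ are completely explicit on sets of size at most two: for instance when $S=S_g$ one has $D_K(\{1\})=a-1$, $D_K(\{2\})=a-2$, $D_K(\{3\})=a-3$, $D_K(\{4\})=6g$ and $D_K(\{i\})=0$ for $i\ge 5$, with similar explicit values on pairs, while $D_{\langle B\rangle}$ is the (trivial) $D$-function of a $2$-simplex. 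Feeding these into Theorem~\ref{t:simplex-union} and checking, for each candidate edge and triangle $T\subseteq[n]$, the inequality $t_r-t_{r-1}\le D_K(T\setminus t_r)+D_{K'}(T\setminus t_r)-D_{\langle B\rangle}(T\setminus t_r)$ pins down $\exshift{K\cup K'}$ completely. The outcome (checked case by case over the orientability types of $S,S'$ and both relevant characteristics) is that $\exshift{K\cup K'}$ equals $\Delta(S\#S',n)$ (resp., in the characteristic-$p$ nonorientable cases, $\Delta_p(S\#S',n)$) together with exactly one extra $2$-face, which is the lex-largest $2$-face of $\exshift{K\cup K'}$ — in the orientable case this extra face is $\{2,3,5\}$, and in the characteristic-$0$ nonorientable case it is $\{1,4,6+2(g+g')\}$. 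In particular $f_2(\exshift{K\cup K'})=f_2((K\cup K')\setminus\{B\})+1$, as it must be.

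\emph{Deleting $B$.} It remains to show that deleting the single $2$-face $B$ from $K\cup K'$ deletes exactly the lex-largest $2$-face $\tau$ from $\exshift{K\cup K'}$; then $\exshift{(K\cup K')\setminus\{B\}}=\Delta(S\#S',n)$ (resp. $\Delta_p(S\#S',n)$), which is a homology lex-segment, as claimed. Since the degree-$(i+1)$ part of the exterior face ring depends only on the $i$-skeleton, the two shiftings agree in dimensions $0$ and $1$. In degree $3$, deleting $B$ replaces the exterior face ring by its quotient killing $\overline{e_B}$; expanding $\overline{e_B}$ in the basis $\{\overline{f_C}\colon C \text{ a }2\text{-face of }\exshift{K\cup K'}\}$ of the degree-$3$ part, this quotient removes from $\exshift{K\cup K'}$ precisely the $2$-face that is lex-largest among those appearing with a nonzero coefficient, and a genericity argument shows that the coefficient of $\overline{f_\tau}$ is a nonzero rational function of the generic change of basis, so $\tau$ itself is the face removed. (Alternatively this follows from the general principle that deleting a facet $\sigma$ from a complex deletes the lex-largest $(\dim\sigma)$-face from its exterior shift.) Verifying that this coefficient does not vanish identically is the step I expect to demand the most care.

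\emph{Other cases.} The nonorientable cases and characteristic $2$ are handled by the same scheme, replacing $\Delta(\cdot,\cdot)$ by the relevant $\Delta_p(\cdot,\cdot)$, using the connected-sum relations $N_h\#N_k=N_{h+k}$ and $S_h\#N_k=N_{2h+k}$ and the additivity of the Betti numbers (over the given field) under wedge of closed surfaces; the only bookkeeping difference is that over a field of characteristic $0$, deleting $B$ from $K\cup K'$ raises $\beta_1$ by one instead of lowering $\beta_2$, while the facet-deletion analysis above is unchanged.
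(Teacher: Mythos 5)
Your overall strategy matches the paper's: compute $\exshift{K\cup K'}$ explicitly via Theorem~\ref{t:simplex-union}, observe that the degree-$\le 2$ part of the shifting is unchanged by deleting $B$, and then argue that deleting the single facet $B$ removes exactly the lex-largest $2$-face of $\exshift{K\cup K'}$.

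The gap is in that last step. You try to pin down the removed $2$-face by expanding $\overline{e_B}$ in the generic basis and arguing that the coefficient of $\overline{f_\tau}$, $\tau$ the lex-largest $2$-face, is generically nonzero; you yourself flag that verifying this non-vanishing ``is the step I expect to demand the most care,'' and the parenthetical ``general principle'' that deleting a facet always removes the lex-largest face of the corresponding dimension from the shift is asserted without proof (and, as stated, is not obviously true for an arbitrary complex). The paper avoids this entirely with a soft argument you do not use: algebraic shifting preserves both the $f$-vector and the Betti numbers, and $\exshift{(K\cup K')\setminus\{B\}}$ is a shifted complex (hence a near-cone, so its Betti numbers are read off the faces missing the vertex $1$) contained in $\exshift{K\cup K'}$ with the same $1$-skeleton, one fewer $2$-face, and (in the orientable case) $\beta_2$ smaller by one. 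Removing any $\{1,*,*\}$ triangle leaves $\beta_2$ unchanged, and removing $\{2,3,4\}$ while keeping $\{2,3,5\}$ breaks shiftedness, so the removed face must be $\{2,3,5\}$ — no genericity computation needed. (In the characteristic-zero nonorientable case the same reasoning applies with $\beta_1$ increasing by one in place of $\beta_2$ decreasing, as you already noted.) You should replace the unresolved coefficient-non-vanishing argument with this Betti-number constraint; as written, the proof is incomplete at exactly the point you flagged.
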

\begin{proof}
    We assume that both $S$ and $S'$ are orientable. The argument for non-orientable surfaces is analogous.
    Let us assume that $K$ and $K'$ have $n$ and $n'$ vertices respectively. The vertex set of $\exshift{K \cup K'}$ is then $[n+n'-3]$. In particular, since $n\geq 4+6g$ and $n'\geq 4+6g'$ we have that $n+n'-3\geq 4+6(g+g')$ as required by the homology lex-segment definition. Combining the expressions $\Delta(S_g,n)$, see also Equation~\eqref{eq:hsegsrf}, with Theorem~\ref{t:simplex-union} we have that 
    \begin{align*}
    \exshift{K\cup K'} &= \{ A \in \binom{[n+n'-3]}{2}\colon A \leq \{4,4+6(g+g')\}\} \\ &\cup \{ A \in \binom{[n+n'-3]}{3}\colon A \leq \{1,4,4+4(g+g')\}\}\\ &\cup \{\{2,3,4\},\{2,3,5\}\}.
    \end{align*}
    Removing $\sigma$ from $K\cup K'$ decreases $\beta_2(K\cup K')$ by one and since the resulting complex is shifted we obtain that $\exshift{(K\cup K') \setminus \{B\}} = \exshift{K\cup K'} \setminus \{2,3,5\}$ which is the desired homology lex-segment complex.
\end{proof}

\begin{figure}[!htb]
\minipage{0.30\textwidth}
\includegraphics[width=\linewidth]{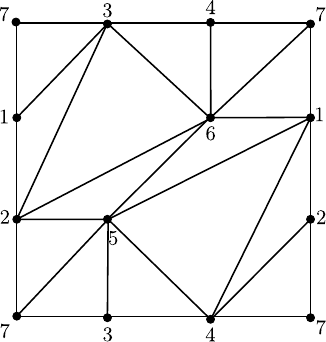}
\centering (1)
\endminipage\hfill
\minipage{0.30\textwidth}
\includegraphics[width=\linewidth]{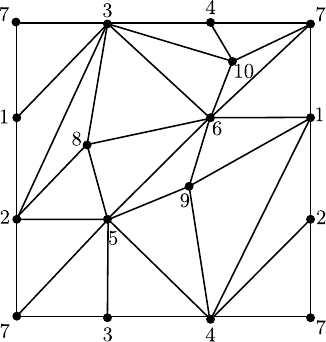}
\centering (2)
\endminipage\hfill
\minipage{0.30\textwidth}%
\includegraphics[width=\linewidth]{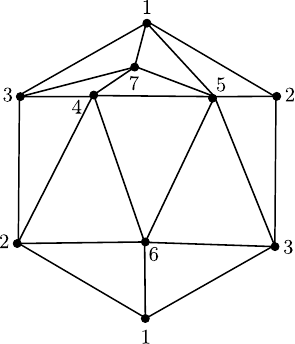}
\centering (3)
\endminipage
\caption{(1) An irreducible triangulation of the torus on $7$ vertices. (2) Triangulation of the torus on $10$ vertices whose exterior algebraic shifting is a homology lex-segment. (3) Triangulation of the projective plane on $7$ whose exterior algebraic shifting is a homology lex-segment over fields with characteristic $0$ and $2$.}
\label{fig:triangulations}
\end{figure}

\begin{corollary}
    \label{c:hlextriang}
    Let $S$ be a closed connected surface with genus $g$, then it admits a triangulation whose exterior algebraic shifting is a homology lex-segment complex.
\end{corollary}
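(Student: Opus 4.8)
The plan is to reduce Corollary~\ref{c:hlextriang} to a handful of small explicit triangulations and then build an arbitrary closed surface from them by iterated connected sum, invoking Corollary~\ref{c:gluehomlex} at each step. \textbf{Base cases.} First I would handle the ``atomic'' surfaces $S_0$, $S_1$ and $N_1$. For the sphere $S_0$ I would use that all $n$-vertex triangulations of $S^2$ have the same exterior algebraic shifting over every field; since the boundary of the $3$-simplex is already shifted and is itself a homology lex-segment, this common complex is a homology lex-segment for every $n$. For the torus I would take the $10$-vertex triangulation $T$ of Figure~\ref{fig:triangulations}(2) and verify (e.g.\ by a direct computation in the exterior face ring) that $\exshift T = \Delta(S_1,10)$; as $S_1$ has torsion-free homology, this equality holds over every field. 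For the projective plane I would take the $7$-vertex triangulation $P$ of Figure~\ref{fig:triangulations}(3) and check, again by a finite computation, that $\Delta^{ex}_0(P)=\Delta_0(N_1,7)$ and $\Delta^{ex}_2(P)=\Delta_2(N_1,7)$; since $2$ is the only characteristic relevant to the homology of $\RP^2$, this settles every field.

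\textbf{Connected-sum induction.} Writing $S_g$ as the connected sum of $g$ tori, I would take $g$ vertex-disjoint copies of $T$, relabel their vertex sets so that consecutive copies meet in exactly one $2$-face, and apply Corollary~\ref{c:gluehomlex} a total of $g-1$ times, deleting the common facet after each gluing. The only thing to check is that the numerical hypotheses built into Corollary~\ref{c:gluehomlex} persist along the iteration: after $k-1$ gluings one has a triangulation of $S_k$ with $7k+3$ vertices, and $7k+3 \ge 4+6k$ for all $k\ge 1$, while the torus summand satisfies $10 \ge 4+6$; hence the corollary applies at every step and outputs a triangulation of $S_g$ with homology lex-segment shifting (in fact equal to $\Delta(S_g,7g+3)$). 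The non-orientable case is identical with $P$ in place of $T$ and $\RP^2$ in place of the torus: $N_g$ is a connected sum of $g$ projective planes, the relevant inequalities become $4g+3 \ge 4+3g$ for the running triangulation of $N_k$ and $7 \ge 4+3$ for the $\RP^2$ summand, and since each summand is a homology lex-segment over each fixed field, the Theorem~\ref{t:simplex-union} computation inside Corollary~\ref{c:gluehomlex} preserves the property one characteristic at a time.

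\textbf{Expected main obstacle.} Everything apart from the base-case verifications is bookkeeping — choosing relabelings so that consecutive summands share a single triangle and tracking vertex counts against the hypotheses of Corollary~\ref{c:gluehomlex} — so I expect the real point of friction to be confirming $\exshift T = \Delta(S_1,10)$ and $\Delta^{ex}_p(P)=\Delta_p(N_1,7)$ for $p\in\{0,2\}$. This is a routine but slightly tedious finite linear-algebra computation (generic exterior products reduced modulo the non-face ideal), and it can also be cross-checked against existing enumerations of small surface triangulations and their algebraic shifts.
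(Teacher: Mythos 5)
Your proposal is correct and takes essentially the same route as the paper: glue explicit $10$-vertex torus and $7$-vertex projective plane triangulations along $2$-faces and iterate Corollary~\ref{c:gluehomlex}. The only differences are cosmetic — you handle $g=0$ explicitly (the paper treats the sphere as known from the introduction) and propose to verify the base cases by direct computation, whereas the paper simply cites \cite[Theorem 1.2]{keehn24-exterior} for those two shiftings.
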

\begin{proof}
  By~\cite[Theorem 1.2]{keehn24-exterior} there is a triangulation of the torus on $10$ vertices and a triangulation of the projective plane on $7$ vertices whose exterior algebraic shifting are homology lex-segments, see Figure \ref{fig:triangulations}(2-3).
  By Corollary~\ref{c:gluehomlex} attaching iteratively $g$ copies of this triangulation along $2$-faces we obtain a triangulation of $S_g$, or $N_g$, that is a homology lex-segment complex.
\end{proof}

\noindent
{\bf Edge contraction and vertex-split.}
Let $K$ be a simplicial complex and $e=\{u,v\}\in K$. We say that $K'$ is \emph{obtained from $K$ by contracting the edge $e$} if $K'$ is obtained from $K$ by replacing every occurrence of the vertex $u$ with the vertex $v$ (and removing duplicated faces).
The inverse operation of an edge contraction is called a \emph{vertex-split}.
In the case that $K$ is a surface triangulation, an edge contractions results in a triangulation of the same surface if an only if the contracted edge is not part of a missing triangle, namely a triangle not in $K$ whose boundary is contained in $K$. The triangulation of the torus in Figure~\ref{fig:triangulations}(1) is obtained from Figure~\ref{fig:triangulations}(2) by means of three edge contractions. Namely, $\{2,8\},\{4,10\}$ and $\{1,9\}$.
Performing a vertex-split on a simplicial complex can only make its exterior algebraic shifting simpler.
To state this precisely we set $\tail(K,A) = \{B \in K \colon A \leq B\}$.

\begin{proposition}[\cite{keehn24-exterior}]
  \label{p:splittail}
  Let $K$ be a surface triangulation and $K'$ obtained from $K$ by a vertex-split.
  Then, for $m\geq 3$ we have that
  $$|\tail(\exshift{K},\{m+1,m+2\})| \geq |\tail(\exshift{K'}, \{m+1,m+2\})|,~\text{and}$$
  $$|\tail(\exshift{K},\{1,m+1,m+2\})| \geq |\tail(\exshift{K'}, \{1,m+1,m+2\})|.$$
\end{proposition}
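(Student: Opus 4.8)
The plan is to reduce the two stated inequalities to comparisons of the ``high'' edges of the shifted graphs $\exshift{K}$ and $\exshift{K'}$ and of their links of the vertex $1$, and then to carry these comparisons out with an explicit model of the vertex-split together with monotonicity of exterior shifting and Theorem~\ref{t:simplex-union}. For the reduction, note that $\exshift{K}$ and $\exshift{K'}$ are exterior shiftings of connected closed surfaces homeomorphic to one and the same $S$, so they are shifted $2$-complexes whose $f$-vectors and Betti numbers depend only on $S$ and on the number of vertices, with $f_0(K')=f_0(K)+1$, $f_1(K')=f_1(K)+3$ and $f_2(K')=f_2(K)+2$. Shiftedness together with $\beta_0=1$ forces every edge $\{1,j\}$ into the shifting, and shiftedness together with the value of $\beta_2\in\{0,1\}$ forces the $2$-faces with least vertex $\ge 2$ to be exactly $\{2,3,4\}$ if $S$ is orientable and none if $S$ is non-orientable. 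Reading $\tail(\Gamma,A)$ as the set of faces of $\Gamma$ of cardinality $|A|$ that are $\ge A$ lexicographically, this yields, for every $m\ge 3$,
\[
|\tail(\exshift{K},\{m+1,m+2\})| = \#\{e\in\exshift{K}:|e|=2,\ \min e\ge m+1\}
\]
and
\[
|\tail(\exshift{K},\{1,m+1,m+2\})| = \beta_2 + \#\{e\in\lk_{\exshift{K}}(1):\ \min e\ge m+1\},
\]
and the analogous identities for $K'$. Since $\beta_2$ agrees for $K$ and $K'$, it suffices to show that these two edge-counts do not increase when $K$ is replaced by $K'$.

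For the comparison I would use an explicit model of the split. Let $v$ be the split vertex and $u$ the new one; then $C:=\lk_{K'}(u)$ is a cycle through $v$, the subcomplex $K_0$ of the faces of $K'$ avoiding $u$ triangulates a surface with boundary circle $C$, and $K'=K_0\cup(u*C)$ while $K=K_0\cup(v*C)$, where $v*C$ is the disk obtained by fanning $v$ over the arc of $C$ not at $v$; moreover $K\cap K'=K_0$ on the vertex set of $K$. Thus $K$ and $K'$ are obtained from the same $K_0$ by two different ways of capping the hole bounded by $C$, one introducing a fresh cone point and the other re-using a boundary vertex. Since $K_0\subseteq K$ and $K_0\subseteq K'$ and exterior algebraic shifting is monotone under inclusion --- passing to a further quotient of the exterior face ring cannot enlarge the span of a generic form by the lex-earlier ones --- both $\exshift{K}$ and $\exshift{K'}$ are shifted complexes containing $\exshift{K_0}$ whose $f_i$ exceeds $f_i(\exshift{K_0})$ by $f_i(K)-f_i(K_0)$, respectively $f_i(K')-f_i(K_0)$, in each dimension $i$; determining which faces get adjoined in each case, and comparing them, is the remaining task. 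Tracking which faces are adjoined controls, simultaneously, the edge-counts of $\exshift{(\cdot)}$ and those of $\lk_{\exshift{(\cdot)}}(1)$, since the adjoined $2$-faces are either $\{2,3,4\}$ (adjoined in both cases) or of the form $\{1,b,c\}$, the latter being exactly the edges of the link of $1$.

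To finish, I would peel each cap one triangle at a time and feed the intermediate complexes into Theorem~\ref{t:simplex-union}: following how the $D$-invariants appearing there evolve at each step should show that capping with the cone $u*C$ adjoins only lex-not-later faces, in every dimension, than capping with the fan $v*C$, which gives the displayed inequalities and hence the proposition. The crux --- and the main obstacle --- is precisely this bookkeeping: a vertex-split is not a sequence of unions along single simplices, so Theorem~\ref{t:simplex-union} has to be pushed through gluings whose intersection is a path or a circle (for instance, filling a $2$-face along its boundary $1$-cycle). An alternative route, which I would develop in parallel and which bypasses the combinatorics, is to write each of the two edge-counts for $\exshift{K}$ as $f_1(K)$ (respectively $f_2(K)$) minus the rank of the appropriate degree-$2$ generic compound matrix attached to $K$, and to show that contracting an edge that lies in no missing triangle raises this rank by exactly $f_1(K')-f_1(K)=3$ (with the parallel statement at the link of $1$); here the no-missing-triangle hypothesis is exactly what makes the contraction surjective on faces and cycle-preserving.
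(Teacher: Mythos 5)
Note first that the paper does not prove this proposition itself; it is cited from~\cite{keehn24-exterior}, so I am evaluating your argument on its own terms. Your opening reduction is correct: for $m\ge 3$, $\tail(\exshift{K},\{m+1,m+2\})$ is exactly the set of edges of $\exshift{K}$ with $\min\ge m+1$, and $\tail(\exshift{K},\{1,m+1,m+2\})$ is exactly $\beta_2$ (the number of triangles with least vertex $\ge 2$, which by shiftedness and the Bj\"orner--Kalai near-cone formula is $\{2,3,4\}$ or nothing) plus the edges of $\lk_{\exshift{K}}(1)$ with $\min\ge m+1$; since $\beta_2$ agrees for $K$ and $K'$, comparing the two edge counts suffices. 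The split model $K'=K_0\cup(u*C)$, $K=K_0\cup(v*\text{arc})$ with $K_0$ the part of $K'$ avoiding $u$, and the monotonicity $\exshift{K_0}\subseteq\exshift{K}$, $\exshift{K_0}\subseteq\exshift{K'}$, are likewise correct.

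There the proof stops being a proof. Containing $\exshift{K_0}$ together with the $f$-vector increments does not determine \emph{which} faces are adjoined to the shifting in each case, and the tool you propose for this, Theorem~\ref{t:simplex-union}, requires the two pieces to meet in a single simplex, whereas $K_0$ meets each cap along the full cycle $C$, and peeling one triangle at a time produces gluings along paths; so the theorem simply does not apply, and you acknowledge this as ``the main obstacle'' without offering a workaround. The alternative you sketch via the rank of a generic degree-$2$ compound matrix is the right \emph{kind} of argument (a shifting-theoretic analogue of Whiteley's vertex-split theorem for generic rigidity), but it is left as a program: the pivotal claim --- that the vertex-split increases the relevant generic rank by exactly $3$ --- is the entire content of the proposition and is asserted, not proved; you also write that \emph{contracting} ``raises'' the rank, which reverses the direction, and the claim is not the na\"ive ``three added rows can raise rank by at most three'' bound, since the two compound matrices do not even share a column set (the vertex sets differ). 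In short, you have correctly normalized the statement and located the crux, but the crux itself is unresolved; this is a genuine gap.
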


\begin{proposition}[\cite{bulavka23-rigidity}]
  \label{p:splitvolume}
  Let $K$ be a surface triangulation on $n$ vertices and $K'$ obtained from $K$ by a vertex-split.
  If $\{1,3,n\}\in \exshift K$, then $\{1,3,n+1\}\in \exshift{K'}$.
\end{proposition}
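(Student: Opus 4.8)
The plan is to reinterpret the condition $\{1,3,n\}\in\exshift{K}$ in terms of generic rigidity and then exploit the stability of generic rigidity under vertex splits. First, since $\exshift{K}$ is shifted, $\{1,3,n\}\in\exshift{K}$ holds if and only if $\exshift{K}$ contains every triple $\tau$ with $\tau\le\{1,3,n\}$ in the lex order, that is, if and only if $|\{\tau\in\exshift{K}:\tau\le\{1,3,n\}\}|=2n-5$ (the triples $\le\{1,3,n\}$ being $\{1,2,j\}$ for $3\le j\le n$ and $\{1,3,j\}$ for $4\le j\le n$); the analogous equivalence, with $2(n+1)-5$, holds for $\{1,3,n+1\}\in\exshift{K'}$. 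By the dictionary between exterior algebraic shifting and higher rigidity — Kalai's identification of the edges of $\exshift{K^{(1)}}$ with bases of the generic $d$-rigidity matroids of the graph $K^{(1)}$, extended to the $2$-faces of $\exshift{K}$ that are relevant here — this count equals the generic rank of an explicit rigidity-type matrix $M(K)$ built from the $1$-skeleton of $K$ together with the distinguished top vertex; thus $\{1,3,n\}\in\exshift{K}$ is the statement that $M(K)$ attains its maximum possible rank, a condition I will call $(\star_K)$, and $\{1,3,n+1\}\in\exshift{K'}$ is exactly the corresponding condition $(\star_{K'})$ for $K'$, with $n$ replaced by $n+1$. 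It therefore suffices to prove $(\star_K)\Rightarrow(\star_{K'})$.

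Second, I would show that a vertex split carries $(\star)$ from $K$ to $K'$. On the level of $1$-skeleta, a vertex split of a surface triangulation — replace a vertex $w$, whose link is a cycle, by an edge $\{w',w''\}$, distributing $w$'s umbrella between $w'$ and $w''$ and adjoining the two hinge triangles — is precisely Whiteley's vertex-splitting operation, which is classically known to preserve generic $d$-rigidity and, more sharply, to raise the rank of the generic $d$-rigidity matrix by exactly the correct amount. Moreover, under a surface vertex split the $f$-vector changes by $(f_0,f_1,f_2)\mapsto(f_0{+}1,f_1{+}3,f_2{+}2)$, so the lex threshold moves from position $2n-5$ (for $n$ vertices) to $2(n+1)-5=2n-3$ (for $n+1$ vertices), i.e.\ exactly two steps, matching the two new triangles. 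Combining Whiteley's rank statement with this numerical matching yields $(\star_{K'})$, and translating back through the dictionary gives $\{1,3,n+1\}\in\exshift{K'}$.

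I expect the main obstacle to be the first step: making the shifting--rigidity dictionary precise for the particular $2$-face $\{1,3,n\}$ of a $2$-dimensional complex, since the clean, standard statement concerns an edge $\{d,n\}$ of a graph's shifting ($\{d,n\}\in\exshift{G}\iff G$ is generically $d$-rigid), and one must identify the right matrix $M(K)$ (presumably built from the generic $3$-dimensional rigidity data of $K^{(1)}$, possibly together with a two-dimensional cofactor/hyperconnectivity layer) and verify that Whiteley's vertex-split analysis applies to it. One must also handle the bookkeeping of the distinguished top vertex, arguing that after a harmless relabeling the split vertex $w$ is distinct from the vertex the threshold ``points at,'' so Whiteley's local rank computation goes through. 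A more self-contained alternative would sidestep the explicit matrix: note that $\{1,3,n\}\in\exshift{K}$ is equivalent to $\lk_{\exshift{K}}(n)$ having at least two edges, i.e.\ to $\exshift{K}$ having at least two $2$-faces through its top vertex, and then track how this quantity changes under a surface vertex split using $f_2(\exshift{K})=f_2(K)$, the deletion behavior of $\exshift{(\cdot)}$ at the top vertex, and Theorem~\ref{t:simplex-union}; but this route trades the rigidity input for delicate bookkeeping of the $f$-vectors of vertex links and seems less transparent. In either form, this proposition is naturally a companion to Proposition~\ref{p:splittail}: the latter says vertex splits can only simplify the ``tail'' of the shifting, whereas this one says the ``initial $\{1,\ast,\ast\}$ part'' keeps pace with the growing vertex set.
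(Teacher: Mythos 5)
This proposition is cited from \cite{bulavka23-rigidity}; the paper offers no proof of its own, so I will assess your proposal against the approach that citation indicates. Both that reference's title and the paper's Conjecture~\ref{conj:area-rig} (``area rigidity'') make clear that $\{1,3,n\}\in\exshift{K}$ is meant to encode area rigidity of the surface triangulation $K$: take a generic realization of the vertices in the plane and ask that the differential of the triangle-area map have maximal rank $2n-5$, the deficiency $5$ coming from the trivial area-preserving affine motions $\mathrm{SL}(2)\ltimes\R^2$. Your overall plan --- recast the shifting condition as a rank statement, then propagate the rank under a vertex split in the spirit of Whiteley --- is almost certainly the intended one, your observation that $\{1,3,n\}$ is the $(2n-5)$-th triple is correct (and shiftedness does yield the stated equivalence), and the count ``two new triangles $\leftrightarrow$ threshold moves by two'' is exactly the right accounting.

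The weak spot you yourself flag is real, and more pointed than you make it out to be. You describe $M(K)$ as a matrix ``built from the $1$-skeleton of $K$ together with the distinguished top vertex,'' but the condition $\{1,3,n\}\in\exshift{K}$ concerns the $2$-faces of $\exshift{K}$, and the matrix whose generic rank one wants is indexed by the $2$-faces of $K$ (rows) and the $2n$ vertex coordinates (columns); it is the Jacobian of the area map, not a bar-joint rigidity matrix of the $1$-skeleton. Because of this, the classical Whiteley vertex-splitting lemma for bar-joint frameworks cannot be cited as-is: you need its area/cofactor-matroid analogue, i.e.\ a separate (if Whiteley-flavored) computation showing that the two new triangle rows introduced by the split can be chosen generically independent of the span of the old rows. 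The ``self-contained alternative'' you sketch --- tracking $\lk_{\exshift{K}}(n)$ under splits using Theorem~\ref{t:simplex-union} and top-vertex deletion --- does not repair this: $\exshift{K'}$ is not computed as a simplex-union of $\exshift{K}$ with anything, and there is no direct formula tying $\lk_{\exshift{K'}}(n+1)$ to $\lk_{\exshift{K}}(n)$, so that route still hinges on an unproven monotonicity of the same rank. In short, the skeleton of your argument matches the citation, but the identification of the rigidity matroid is misstated and the vertex-split rank step is the substantive content that still needs to be supplied.
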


\begin{corollary}
\label{c:splithlex}
  Let $K$ be a surface triangulation and $K'$ obtained from $K$ by a vertex-split.
  If $\exshift K$ is a homology lex-segment complex, then $\exshift{K'}$ is as well.
\end{corollary}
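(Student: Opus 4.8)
The plan is to prove directly that $\exshift{K'}$ equals the homology lex-segment on its vertex set, which by definition of that notion is exactly the assertion. Write $n=|V(K)|$, so $|V(K')|=n+1$; since $\exshift K=\Delta(S,n)$ is a homology lex-segment, $n$ lies above the relevant threshold ($n\ge 4+6g$ for $S=S_g$, analogously in the other cases), hence so does $n+1$. Both $K$ and $K'$ triangulate $S$, and exterior algebraic shifting preserves the $f$-vector and the Betti numbers, which for a surface triangulation are determined by the number of vertices and the Euler characteristic; thus $\exshift{K'}$ is a shifted $2$-complex on $[n+1]$ with $\beta_0=1$, $\beta_2=\beta_2(S)$ over the fixed field, and with $3((n+1)-\chi(S))$ edges and $2((n+1)-\chi(S))$ triangles. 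It remains to locate these edges and triangles.

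\emph{Edges.} Proposition~\ref{p:splittail} gives $|\tail(\exshift{K'},\{m+1,m+2\})|\le|\tail(\Delta(S,n),\{m+1,m+2\})|$ for all $m\ge 3$, and the right side, read off $\Delta(S,n)$, is $0$ for $m\ge 4$ and a constant depending only on the case (e.g.\ $6g$ for $S_g$, $3g$ for $N_g$) for $m=3$, independent of $n$. The bounds for $m\ge 4$ force every edge of $\exshift{K'}$ to meet $\{1,2,3,4\}$; since in a shifted graph both endpoints of every edge are adjacent to vertex $1$, one has $\beta_0(\exshift{K'})=(n+1)-\deg(1)$, so $\beta_0=1$ forces vertex $1$ to be adjacent to all others. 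Let $p_i$ be the number of edges with least element $i$: then $p_1=n$, shiftedness gives $p_2\ge p_3\ge p_4$, the bound at $m=3$ caps $p_4$, and $p_2\le n-1$, $p_3\le n-2$, while $p_1+p_2+p_3+p_4$ is the prescribed edge count. A short count shows all these are forced to equality, so the edge set of $\exshift{K'}$ is exactly that of $\Delta(S,n+1)$.

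\emph{Triangles.} In each case $\{1,3,n\}\in\Delta(S,n)=\exshift K$, since $\{1,3,n\}$ precedes $\{1,4,5\}$ in lex order; so Proposition~\ref{p:splitvolume} yields $\{1,3,n+1\}\in\exshift{K'}$, and shiftedness then forces all triangles $\{1,2,\cdot\}$ and $\{1,3,\cdot\}$ into $\exshift{K'}$. Proposition~\ref{p:splittail} in dimension $2$ bounds $|\tail(\exshift{K'},\{1,m+1,m+2\})|$ by the corresponding tail of $\Delta(S,n)$ — which is a genus constant at $m=3$ and equals $\beta_2(S)$ (so $0$ or $1$) for $m\ge 4$. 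Combining the triangle count, the faces just forced in, and the already-determined edge set (which forbids any triangle $\{1,a,b\}$ with $a\ge 5$, the edge $\{a,b\}$ being absent), one is left with exactly two possibilities: either the remaining triangles are precisely an initial lex-segment of $\{1,4,\cdot\}$'s, or all but one of them are and the last is $\{2,3,4\}$ (forced by shiftedness once any triangle avoiding vertex $1$ is present). The number of $2$-faces of $\exshift{K'}$ avoiding vertex $1$ equals $\beta_2(\exshift{K'})=\beta_2(S)$ by the discrete-Morse/Bj\"{o}rner--Kalai description of the homology of a shifted complex; this is $0$ in the first possibility and $1$ in the second, so $\beta_2(S)$ selects the correct one, and in either case the triangle set of $\exshift{K'}$ coincides with that of $\Delta(S,n+1)$. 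Hence $\exshift{K'}=\Delta(S,n+1)$.

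The step needing the most care is the rigidity in each dimension: that the $n$-independent tail bounds from Proposition~\ref{p:splittail}, together with the exact $f$-vector, leave no freedom in the edges and only the stated binary freedom in the triangles. Once the shifted profile inequalities are set up this is a short counting argument; the remaining chores are tracking the three field-characteristic cases and checking the degenerate small-genus instances — notably the sphere $S_0$, where the relevant constants vanish and the edge lex-segment already terminates at $\{3,n+1\}$ — neither of which presents real difficulty.
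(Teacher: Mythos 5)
Your proof is correct and follows the same strategy as the paper's: apply Proposition~\ref{p:splittail} to kill the tails from $\{m+1,m+2\}$ (so every edge meets $\{1,2,3,4\}$), apply Proposition~\ref{p:splitvolume} to force $\{1,3,n+1\}$, and then determine the remaining tails from the $f$-vector and Betti numbers. One small simplification you could make: once Proposition~\ref{p:splitvolume} gives $\{1,3,n+1\}\in\exshift{K'}$, shiftedness already forces $p_1=n$, $p_2=n-1$, $p_3=n-2$, so the $\beta_0$/degree-of-vertex-$1$ argument in your edge step is redundant, and the only free parameter is $p_4$, fixed by the edge count. On the other hand, your triangle step is actually spelled out more carefully than the paper's: the paper asserts that $\tail(\exshift{K'},\{1,4,5\})$ is ``totally ordered'' and hence pinned down by its size, but that tail potentially contains both $\{1,4,j\}$'s and triangles avoiding vertex $1$, which are incomparable in the dominance order, so on its face the paper's claim is imprecise. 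Your explicit use of the Bj\"{o}rner--Kalai fact that the number of triangles avoiding vertex $1$ in a shifted $2$-complex equals $\tilde\beta_2$ is exactly the missing ingredient that resolves the binary ambiguity (presence or absence of $\{2,3,4\}$), and this is implicitly what the paper must also be relying on.
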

\begin{proof}
  On the one hand, Proposition~\ref{p:splittail} implies that $|\tail(\exshift{K'},\{5,6\})|\leq |\tail(\exshift{K},\{5,6\})|=0.$
  On the other hand, Proposition~\ref{p:splitvolume} says that $\{1,3,n'\}\in \exshift{K'}$, where $n'$ is the number of vertices in $K'$.
  Then, the tails $\tail(\exshift{K'},\{4,5\})$ and $\tail(\exshift{K'},\{1,4,5\})$ are totally ordered sets 
  of same sizes as the corresponding tails w.r.t. $K$, and hence equal to the corresponding tails w.r.t. $K$
  (these sizes are determined solely by the genus of the surface). To conclude, $\exshift{K'}$ is a homology lex-segment complex.
\end{proof}

The following lemma is similar to \cite[Lemma 4.10]{keehn24-exterior}, which in turn is similar to Whiteley's vertex splitting lemma~\cite{whiteley90-split}.  
\begin{lemma}
  \label{l:interior-edge}
  Let $K$ be a triangulation of a disc with at least one edge $e$ adjacent to at least one interior vertex. 
  Then, there exists a contractible edge incident to at least one interior vertex.
  Moreover, if both endpoints of $e$ are interior then there exists a contractible edge incident to two interior vertices.
\end{lemma}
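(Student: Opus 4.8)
The plan is to argue by contradiction, assuming no contractible edge is incident to an interior vertex, and extract a forbidden substructure from the disc triangulation. Recall that in a surface (here, disc) triangulation an edge $\{u,v\}$ is non-contractible precisely when it lies in a \emph{missing triangle}: there is a vertex $w$ with $\{u,w\},\{v,w\}\in K$ but $\{u,v,w\}\notin K$; equivalently the link of $\{u,v\}$ has more vertices than the two it must contain when $\{u,v\}$ is an interior edge, or (for boundary edges) the link contains a vertex beyond the unique triangle on that side. So the assumption is: every edge $e$ incident to an interior vertex is \emph{blocked} by such a missing triangle. First I would set up the local picture around a fixed interior vertex $x$ of smallest degree (which exists since $e$ has at least one interior endpoint, so interior vertices exist). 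The link $\lk(x)$ is a cycle $C=(v_1,\dots,v_k)$, and each edge $\{x,v_i\}$ is blocked by some $w_i\notin\{v_{i-1},v_{i+1}\}$ with $\{x,w_i\}$ an edge, i.e.\ $w_i=v_j$ for some non-adjacent $j$ on the link cycle.

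The key step is a counting/extremality argument on this link cycle. The chord $\{v_i,v_j\}$ witnessing the block of $\{x,v_i\}$ is an edge of $K$ lying "above" $x$ (inside the star), and these chords, together with the boundary cycle $C$, triangulate the disc $\st(x)$ minus $x$ into triangles none of which contains $x$ — but this is impossible if the star is a genuine disc with $x$ interior, because then every triangle of $\lk(x)*x$ contains $x$. More carefully: a blocking chord $\{v_i,v_j\}\in K$ together with $x$ would, if $\{v_i,v_j,x\}\in K$, contradict $\{v_i,v_j\}\in\lk(x)$ being a chord (non-edge of the cycle but in the link would force it to be an edge of the triangulated disc $\lk(x)$); and if $\{v_i,v_j,x\}\notin K$ then we have a missing triangle on $\{x,v_i\}$ — consistent with our assumption — but now the \emph{boundary} of $\st(x)$ gets a chord $\{v_i,v_j\}$ cutting the link disc, and iterating this I would derive that the link cycle, which has length equal to $\deg(x)\ge 3$, must in fact have length $3$ for the minimality to be consistent, and then $x$ has three neighbors all pairwise adjacent, so $\{x,v_1,v_2\}\in K$, making $\{x,v_1\}$ contractible — contradiction. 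For the "moreover" clause, when both endpoints of $e$ are interior I would run the same argument but choosing $x$ to be an interior vertex \emph{adjacent to another interior vertex} of minimal degree among such; the produced contractible edge $\{x,v_1\}$ then has $x$ interior, and I'd need to check $v_1$ can be taken interior — if the only contractible edges at $x$ go to boundary vertices, I would pass to the interior neighbor and repeat, with a descent argument on (number of interior vertices, degree) guaranteeing termination.

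The main obstacle I anticipate is making the "iterating blocking chords cuts the link disc" step precise and terminating: one must show that the collection of blocking chords, viewed inside the triangulated polygon $\lk(x)$ (which is itself a triangulated disc), cannot all exist without forcing $\lk(x)$ to already contain an edge $\{v_{i-1},v_{i+1}\}$ that makes $\{x,v_i\}$ contractible after all — essentially an ear-decomposition argument on the triangulated polygon bounded by $C$. I would phrase it as: a triangulated polygon on $k\ge 4$ vertices has an ear, i.e.\ a vertex $v_i$ with $\{v_{i-1},v_{i+1}\}$ an edge; if that ear vertex $v_i$ is such that $\{x,v_i\}$ is an interior edge of $K$, its block is witnessed by a chord, but the ear says $\lk(\{x,v_i\})=\{v_{i-1},v_{i+1}\}$ with $\{v_{i-1},v_{i+1}\}\in K$, so $\{x,v_i\}$ is contractible, contradiction; and if $k=3$ we already won. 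The one genuine subtlety is boundary vertices of the disc appearing on $\lk(x)$: there $\lk(x)$ is a path, not a cycle, but a triangulated polygon-with-a-boundary-path still has an ear at an interior-of-the-path vertex, so the same ear argument applies, and I would handle this case in parallel. This reduces the whole lemma to the standard fact that every triangulated polygon with at least four vertices has an ear, which I would cite or prove in one line by induction.
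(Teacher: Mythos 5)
Your proposal takes a genuinely different route from the paper's, which uses a short direct descent rather than an extremal/ear argument, and as written your route has concrete gaps.

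\emph{What the paper does.} Start with $e_0=\{u,v\}$ incident to an interior vertex. If $e_0$ is not in a missing triangle, done. Otherwise $e_0$ lies in a missing triangle $T_0=\{u,v,w\}$, and $\partial T_0$ bounds a \emph{proper} subdisc $K_1\subsetneq K_0$ that has at least one interior vertex (because $T_0\notin K$, so $K_1$ cannot be the single triangle $T_0$). Since $e_0\subseteq\partial K_1$, the unique triangle of $K_1$ on $e_0$ is $\{u,v,y\}$ with $y\neq w$, hence $y$ is interior to $K_1$ (and a fortiori to $K$). Take $e_1\in\{\{u,y\},\{v,y\}\}$ and iterate; the subdiscs shrink strictly, so this terminates. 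The ``moreover'' clause follows because both endpoints of $e_{i+1}$ remain interior in $K$ whenever both endpoints of $e_i$ were.

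\emph{Gaps in your approach.} (a) A blocking chord $\{v_i,v_j\}$ is not ``inside the star'' of $x$: since $\{x,v_i,v_j\}\notin K$, the edge $\{v_i,v_j\}$ is not in $\lk(x)$ and hence not a face of $\st(x,K)$; it lies in $K\setminus\st^{\circ}(x,K)$. So these chords do not ``triangulate $\st(x)$ minus $x$.'' (b) The region $K\setminus\st^{\circ}(x,K)$ where the chords live is a triangulated \emph{annulus} (when $x$ is interior to the disc $K$), not a triangulated polygon, and $\lk(x)$ itself is just a $1$-dimensional cycle, not a triangulated disc. The ear lemma for triangulated polygons does not apply to a triangulated annulus (there may be no ear on the inner boundary cycle). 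To manufacture a disc, you would first have to pass to the piece cut off by an innermost blocking chord — at which point you are effectively redoing the paper's descent into subdiscs bounded by missing triangles, just less directly. (c) The contractibility deduction at the ear invokes the wrong criterion: an interior edge $\{x,v_i\}$ is contractible iff $v_{i-1},v_{i+1}$ are its \emph{only} common neighbors with $x$ (no missing triangle through $\{x,v_i\}$). The presence of the edge $\{v_{i-1},v_{i+1}\}\in K$ is neither what the link condition asks for nor implied by it; what the ear actually gives you, if set up correctly, is $\deg_K(v_i)=3$, and one must then argue from that. (d) The choice of $x$ as a minimum-degree interior vertex gives no leverage after you descend to a subdisc, since the degree of a vertex in the subdisc is unrelated to its degree in $K$; the paper's argument needs no extremal hypothesis at all, only finiteness. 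I would drop the extremal/ear framing and use the direct descent.
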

\begin{proof}
  Let $K_0 = K$ and $e_0=\{u,v\}$ an edge incident to an interior vertex.
  If $e_i$ is not part of a missing triangle then it is the desired edge.
  Otherwise, $e_i$ is part of the boundary of a missing triangle $T_i$.
  Let $K_{i+1}$ denote the disc subcomplex of $K_i$ with boundary $\partial T_i$.
  Since each endpoint of $e_i$ is adjacent with an interior vertex of $K_{i+1}$, 
  select one of these edge and denote it $e_{i+1}$. Because the triangulation is finite
  we eventually encounter an edge that is not part of any missing triangle, which is a contractible edge as desired.
\end{proof}

\subsection{The Delaunay complex}
\noindent
{\bf Riemannian surfaces.}
An \emph{embedding} of a simplicial complex $K$ into a surface $S$ is a continuous injective map from its geometric realization into $S$. We will denote by $|K|$ the image of the embedding of $K$ in $S$. If this map is a homeomorphism then $K$ is said to be a \emph{triangulation} of $S$. If in addition $S$ has a smooth structure then an embedding is called (piecewise) smooth if restricting it to each edge of $K$ gives rise to a (piecewise) smooth map.
The following is well-known.
\begin{theorem}{\cite{hatcher13-triangsurf}}
\label{t:smooth-emb}
Let $K$ be a simplicial complex and $S$ be a surface with a smooth structure. If $K$ is a triangulation of $S$ then it can be smoothly embedded into $S$.
\end{theorem}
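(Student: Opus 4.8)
\emph{Proof proposal.} The plan is to upgrade the given homeomorphism $f\colon |K|\to S$ to a homeomorphism that is smooth on each simplex, proceeding by skeleta and fixing an auxiliary smooth atlas on $S$. First I would treat the local picture at vertices: for each vertex $v$ of $K$ choose a chart $\phi_v$ on a disc neighborhood $D_v$ of $f(v)$, with the $D_v$ pairwise disjoint and so small that each $D_v$ meets only the (finitely many) edges of $K$ incident to $v$. Inside $D_v$, replace the germ at $f(v)$ of each incident edge $f(e)$ by a short smooth arc issuing from $f(v)$ with pairwise distinct tangent directions, chosen in the cyclic order in which the edges occur around $v$ (so the combinatorial rotation system is unchanged), and kept pairwise disjoint away from $f(v)$ and $C^0$-close to the original germ.

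Next I would globalize over edges. For an edge $e=\{u,v\}$ we now have a smooth stub at each endpoint and a middle portion of $f(e)$ which, by injectivity of $f$ and the smallness of the $D_w$, is disjoint from every other modified edge-piece. Replace the middle by a smooth arc joining the two stubs with matching tangents at the two junctions and $C^0$-close to $f(e)$; compactness and closeness keep it disjoint from all the other (finitely many) edges. Doing this for all edges produces an embedding $g$ of the $1$-skeleton $K^{(1)}$ that is smooth on each edge. Since $g$ is $C^0$-close to $f|_{K^{(1)}}$ and combinatorially identical (same cyclic orders at vertices), $g$ and $f|_{K^{(1)}}$ are ambient isotopic in $S$; consequently the components of $S\setminus g(|K^{(1)}|)$ are open discs, one per $2$-face of $K$, each bounded by a piecewise-smooth triangle $g(\partial\tau)$ with three corners. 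Finally, for each $2$-face $\tau$ the closed region $\overline{R_\tau}$ is a disc whose boundary is already parametrized by $\partial\Delta^2$ in a piecewise-smooth way; extend this to a homeomorphism $\Delta^2\to\overline{R_\tau}$ that is smooth on $\operatorname{int}\Delta^2$ — e.g.\ by Riemann-mapping the interior and invoking boundary regularity, or by coning off a smooth collar of $\partial R_\tau$ in a chart. Gluing these extensions along the fixed $1$-skeleton yields a homeomorphism $|K|\to S$ smooth on every simplex, i.e.\ a smooth embedding realizing the triangulation.

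The step I expect to be the main obstacle is the isotopy bookkeeping in the edge stage: one must be certain that the edge-by-edge smoothing preserves \emph{global} injectivity and the cellular structure on $S$, which reduces to the fact that a sufficiently $C^0$-small perturbation of an embedded graph in a surface is ambient isotopic to the original. This is exactly where dimension two is essential (the analogue fails for higher-dimensional complexes), and it should be quoted carefully rather than waved at. An alternative, essentially non-constructive route sidesteps this entirely: $S$ carries a smooth triangulation $K'$ by the Cairns--Whitehead theorem, and since the Hauptvermutung holds in dimension two (Rad\'o--Moise) the triangulations $K$ and $K'$ admit a common subdivision $\widetilde K$; the smooth embedding of $K'$ restricts, on each simplex of its subdivision $\widetilde K$, to a smooth map, and hence induces an embedding of $K$ that is smooth on every simplex. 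Either argument gives the claim.
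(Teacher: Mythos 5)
The paper does not prove this statement: it is quoted directly from Hatcher's notes on triangulating surfaces (the cited reference), so there is no in-paper argument to compare yours against. Evaluating your proposal on its own terms: your first, constructive route --- smooth stubs at the vertices, smooth interpolation along each edge, then the fact that a $C^0$-small perturbation of an embedded graph in a surface which fixes the vertices and the rotation system is ambient isotopic to the original, then face-filling --- is the standard argument and is essentially sound. You are right that the ambient isotopy step is where the real mathematical content sits; it is a dimension-two phenomenon (uniqueness of regular neighborhoods of a graph in a surface, or isotopy of $C^0$-close arcs) and should be cited rather than re-derived. One small simplification: the paper's definition of ``smooth embedding'' only constrains the restriction to \emph{edges}, so the $2$-face extensions need only be homeomorphisms (Schoenflies suffices); the Riemann-mapping/boundary-regularity machinery is optional.

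Your alternative route via Cairns--Whitehead and the Hauptvermutung, however, has a genuine gap as written. Composing a PL homeomorphism $|K|\to|K'|$ (simplicial on a common subdivision $\widetilde K$) with a smooth embedding of $K'$ gives a homeomorphism $|K|\to S$ that is smooth on every simplex of $\widetilde K$, but only \emph{piecewise} smooth on the edges of $K$: at a vertex of $\widetilde K$ interior to an edge of $K$ the composite will in general have a corner, and your claim that this ``induces an embedding of $K$ that is smooth on every simplex'' does not follow. The distinction is not cosmetic here --- in Lemma~\ref{l:good-emb} the theorem is invoked to produce arcs $\widetilde{\gamma_e}$ along which tubular neighborhoods (in O'Neill's smooth sense) are then taken, which requires the arcs to be genuinely smooth. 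Repairing the second route therefore needs a final corner-smoothing together with the same isotopy control you flagged, so it does not in fact sidestep the difficulty; the two routes collapse to the same argument.
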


Now, let $S$ be a connected closed Riemannian surface, i.e., compact and without boundary. By the Hopf-Rinow theorem~\cite{docarmo92-riemannian} every two points on $S$ are connected by a minimal length geodesic. For $p,q\in S$ let us denote by $d(p,q)$ the distance between $p$ and $q$ given by a minimal length geodesic joining them.
For $X\subseteq S$ and $\epsilon >0$ we will denote $B(X,\epsilon) = \{q\in S \colon d(X,q)<\epsilon \}$ the open neighborhood of a set $X$ of radius $\epsilon$.
The \emph{injectivity radius at $p\in S$}, denoted by $\rinj(p)$,
is the largest real number $r$ such that whenever $d(p,q)<r$, then there exists
a unique minimal geodesic from $p$ to $q$. The \emph{injectivity radius of $S$} is defined as $\rinj(S) = \inf_{p\in S} \rinj(p)$.
A subset $A\subseteq S$ is said to be \emph{strongly convex} if for every two elements $p,q\in A$ there exists a unique minimal geodesic joining them, this geodesic is contained in $A$ and there is no other geodesic in $A$ joining $p$ and $q$. 
The \emph{strong convexity radius at $p$} is defined as $$\rscv(p) = \sup \{r \colon B(p,r')~\text{is strongly convex for every}~r' < r \}.$$
The strong convexity radius of $S$ is $\rscv(S) = \inf_{p\in S}\rscv(x)$ and is known to be positive~\cite[1.9.9]{klingenberg95-riemannian}.

\vspace{3mm}
\noindent
{\bf Tubular neighborhood.} For each point $p\in S$ the exponential map at $p$ is the smooth map $\exp_p\colon T_pS \rightarrow S$
that assigns for every vector $X\in T_pS$ the point $\gamma(\norm{X})$ where $\gamma$ is the geodesic starting
at $p$ with tangent vector $\gamma'(0)=X/\norm{X}$. Here the norm is given by the Riemannian metric on $T_pS$.
Now, let $\gamma$ be a smooth curve in $S$ and $\epsilon>0$.
At each point $p=\gamma(t)$ let $X\in T_pS$ be an orthonormal vector with respect to $\gamma'(t)$ (there are two choices, one the minus of the other, but both result in the same tubular  neighborhood, defined next) and set
$$I(p,\epsilon) = \{ \exp_p(tX) \colon t\in (-\epsilon,\epsilon)\} \text{ and } I(\gamma,\epsilon) = \bigcup_{p\in \gamma} I(p,\epsilon).$$
There exists $\epsilon>0$ such that the sets $I(p,\epsilon)$ are all disjoint for $p\in \gamma$~\cite[Proposition 7.26]{oneill83-tubular}.
In this case $I(\gamma,\epsilon)$ is called the \emph{tubular neighborhood of $\gamma$ with radius $\epsilon$}.
For a simple curve $\gamma$ with distinct endpoints $a$ and $b$ we extend its tubular neighborhood by adding a cap on
each end point. Concretely, we set $\tilde{I}(\gamma,\epsilon') = B(a,\epsilon')\cup B(b,\epsilon')\cup I(\gamma, \epsilon')$
where $0<\epsilon'\leq \epsilon$ is chosen to be small enough in order for the caps to be disjoint embedded discs.

\vspace{3mm}
\noindent
{\bf Delaunay complex.} Let $P\subseteq S$ be a finite subset,
\emph{the Delaunay complex of $P$ in $S$}, denoted by $\Del(P)$,
is the simplicial complex whose faces are given by subsets $A \subseteq P$
for which there exists an open ball $B(x,\epsilon)\subseteq S$ disjoint from $P$
such that $A \subseteq cl (B(x,\epsilon))$~\cite{delaunay34-sphere}.
Although $\Del(P)$ always defines a simplicial complex, its geometric realization is in general not homeomorphic to $S$.
In the case of surfaces a density condition suffices to ensure that for a subset $P\subseteq S$ locally in general position, $\Del(P)$ is a smoothly embedded triangulation of $S$. 
We make this requirement precise now. For it set $\rdel(S) = \min\{\rinj(S)/6, \rscv(S)\}$.
A subset $P\subseteq S$ is \emph{locally in general position} if no point is on the minimal geodesic between other two points at distance less than $\rdel(S)$, and no four points are simultaneously on the boundary of a ball of radius less than $\rdel(S)$.
A set of points $P\subseteq S$ is \emph{$\rho$-dense} if there is at least one point of $P$
in any ball in $S$ of radius greater or equal to $\rho$.
The existence of a Delaunay triangulation is given by the following theorem,
see also~\cite[Theorem 2]{dyer08-surface}.

\begin{theorem}[\cite{leibon99-delaunay}]
  \label{t:deltriang}
  Let $S$ be a closed connected Riemannian surface and $\rho \leq \rdel(S)$.
  If $P\subseteq S$ is a $\rho$-dense finite subset locally in general position,
  then $\Del(P)$ is a geodesically embedded triangulation of $S$. 
\end{theorem}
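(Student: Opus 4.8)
The plan is to work entirely at the small scale forced by $\rho$-density and then to identify $\Del(P)$ with the nerve of the geodesic Voronoi decomposition of $P$. Since every open ball of radius $\rho$ meets $P$, any ball witnessing a face of $\Del(P)$ has radius $<\rho\le\rdel(S)=\min\{\rinj(S)/6,\rscv(S)\}$; hence each face of $\Del(P)$, and every geometric object appearing below, is contained in a metric ball of radius $<3\rho\le\rinj(S)/2$, which lies in a strongly convex ball. On such a ball geodesic geometry mimics planar geometry: any two points are joined by a unique minimal geodesic depending smoothly on the endpoints, a geodesic triangle with non-collinear vertices bounds an embedded disc, three non-collinear points at pairwise distance $<\rdel(S)$ have a unique circumscribed circle (of radius $<\rdel(S)$, by local general position), and $\exp_p$ is a diffeomorphism. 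In particular, for every $A\in\Del(P)$ of dimension at most $2$ one may form the embedded geodesic simplex $\conv(A)$ inside such a ball.

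Next I would set up Voronoi duality. Let $\mathrm{Vor}(P)$ be the geodesic Voronoi decomposition of $S$, $\mathrm{Vor}(a)=\{x\in S\colon d(x,a)\le d(x,b)\ \forall\,b\in P\}$. Using $\rho$-density together with $\rho\le\rscv(S)$ and local general position, one checks that $\mathrm{Vor}(P)$ is a genuine CW decomposition of $S$ whose $2$-cells are strongly convex geodesic polygons of diameter $<2\rho$, whose $1$-cells are geodesic arcs, and all of whose vertices have degree exactly $3$ (a vertex where four or more cells meet would be equidistant from four sites lying on a small circle, contradicting local general position). Moreover $A\in\Del(P)$ if and only if $\bigcap_{a\in A}\mathrm{Vor}(a)\ne\emptyset$, since both are equivalent to the existence of a radius-$<\rho$ ball, empty of $P$, whose closure meets $P$ exactly in $A$. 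Thus $\Del(P)$ is precisely the nerve of $\mathrm{Vor}(P)$; being the nerve of a regular CW decomposition of the closed surface $S$ with exactly three $2$-cells at each vertex, it is an abstract triangulated closed surface, homotopy equivalent to $S$ by the nerve lemma and hence homeomorphic to $S$.

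It then remains to promote this to a \emph{geodesic} embedding, i.e., to show that the map $\Phi\colon|\Del(P)|\to S$ realizing each simplex $A$ as $\conv(A)$ is a homeomorphism. For a vertex $a$, the triangles of $\Del(P)$ at $a$ are in cyclic correspondence with the vertices of the convex polygon $\mathrm{Vor}(a)$; since $\st(a)$ lies in one strongly convex ball on which $\exp_a$ is a diffeomorphism, the geodesic edges leave $a$ in this cyclic order and the geodesic triangles are embedded, so $\Phi|_{\st(a)}$ is an embedding onto a neighborhood of $a$. Hence $\Phi$ is a proper local homeomorphism, so it is a covering of the connected surface $S$, and since it induces an isomorphism on fundamental groups (it realizes, up to homotopy, the nerve-lemma equivalence) it is one-sheeted, hence a homeomorphism. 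This exhibits $\Del(P)$ as a geodesically embedded triangulation of $S$.

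The main obstacle is the middle step: deducing from $\rho$-density and local general position that $\mathrm{Vor}(P)$ really is a CW decomposition with strongly convex, embedded $2$-cells and only degree-$3$ vertices. The bound $\rho\le\rscv(S)$ is what guarantees that each Voronoi cell --- an intersection of ``geodesic half-planes'' inside a convex ball --- is itself strongly convex and embedded, while local general position is exactly what rules out four cocircular sites and the degenerate higher-degree vertices that would break the ``nerve is a triangulation'' conclusion. An equivalent route, technically lighter on the point-set topology, is Lawson's flip algorithm: start from any geodesic triangulation with vertex set $P$ (which exists by density), repeatedly flip an edge failing the local empty-circumcircle test, prove termination via a potential that is monotone under flips (inside convex balls this is the classical in-circle potential, patched into a global function), and then show by a propagation argument --- again carried out entirely within radius-$<\rdel(S)$ convex balls --- that the terminal ``everywhere locally Delaunay'' triangulation satisfies the global empty-ball condition and so equals the geodesic realization of $\Del(P)$. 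There the work lies in the termination of the flip process and in the local-to-global propagation, both routine in the plane but requiring care to transfer to $S$.
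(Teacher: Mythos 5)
The paper does not prove Theorem~\ref{t:deltriang}; it is imported verbatim from the cited reference \cite{leibon99-delaunay} (see also \cite{dyer08-surface}), so there is no in-paper argument to compare against. Your Voronoi-duality sketch is the standard approach taken in that reference, and its overall structure is sound: work inside strongly convex balls where geodesic geometry behaves Euclidean, establish that the geodesic Voronoi decomposition is a regular CW complex with trivalent vertices (using local general position), identify $\Del(P)$ with its dual, and then upgrade the abstract identification to a geodesic embedding.

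Two places where your sketch is thinner than you acknowledge. First, the passage from ``nerve of the Voronoi decomposition'' to ``abstract triangulated closed surface homeomorphic to $S$'' hides the step of verifying that the link of each vertex of $\Del(P)$ is a single cycle; this is exactly where one uses that each $\mathrm{Vor}(a)$ is an embedded convex polygon, not merely that vertices are trivalent, and it is needed before the nerve lemma or surface classification can be invoked. Second, the final covering-space argument is underspecified: properness is fine ($|\Del(P)|$ is compact), but to conclude the cover is one-sheeted you appeal to ``the nerve-lemma equivalence'' inducing a $\pi_1$-isomorphism, which requires showing $\Phi$ is homotopic to that equivalence (after thickening the closed Voronoi cells to a good open cover). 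A cleaner route is to note that the Voronoi decomposition and $\Del(P)$ have the same Euler characteristic, so a $k$-sheeted cover forces $k=1$ whenever $\chi(S)\neq 0$; for $\chi(S)=0$ you do need the $\pi_1$ argument, or instead show directly that $\Phi$ is injective using that the open geodesic stars $\Phi(\st^\circ(a))$ agree with neighborhoods of the $\mathrm{Vor}(a)$ and tile $S$. These are precisely the technical points \cite{leibon99-delaunay} devotes its effort to, and you correctly flag the Voronoi-cell convexity as the crux.
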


In order to control the behaviour of Delaunay edges we will repeatedly use the following bound on their length.

\begin{lemma}
  Let $P\subseteq S$ be a $\rho$-sense finite subset such that $\Del(P)$ is a geodesically embedded triangulation of $S$.
  Then, for each edge $e\in \Del(P)$ its embedding $\gamma_e$ has length strictly less than $2\rho$.
\end{lemma}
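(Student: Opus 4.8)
The plan is to read off from the definition of the Delaunay complex a small empty ball whose closure contains both endpoints of the edge, and then to play the density hypothesis against the triangle inequality.

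Concretely, let $e=\{p,q\}\in\Del(P)$. By the definition of $\Del(P)$ there is an open ball $B(x,\epsilon)\subseteq S$ with $B(x,\epsilon)\cap P=\emptyset$ and $p,q\in\cl(B(x,\epsilon))$; in particular $d(p,x)\le\epsilon$ and $d(q,x)\le\epsilon$. The first step is to observe that $\epsilon<\rho$: otherwise $B(x,\epsilon)$ would be a ball of radius at least $\rho$, and $\rho$-density of $P$ would force $B(x,\epsilon)\cap P\neq\emptyset$, contradicting emptiness. (Here one should keep the open-versus-closed ball conventions straight so that the hypothesis, phrased for balls of radius $\ge\rho$, delivers the \emph{strict} inequality $\epsilon<\rho$.) Then the triangle inequality gives $d(p,q)\le d(p,x)+d(x,q)\le 2\epsilon<2\rho$. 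Finally, since $\Del(P)$ is a geodesically embedded triangulation of $S$ by Theorem~\ref{t:deltriang}, the edge $e$ is embedded as the minimizing geodesic between $p$ and $q$, so $\length(\gamma_e)=d(p,q)<2\rho$, which is the assertion. If one wants to avoid relying on the precise meaning of ``geodesically embedded'' here, one may instead enlarge $B(x,\epsilon)$ to a strongly convex ball $B(x,\epsilon')$ with $\epsilon<\epsilon'<\rho\le\rscv(S)$ still containing $p$ and $q$, so that the edge $\gamma_e$, lying in this ball, must be the unique minimal geodesic joining its endpoints, and again $\length(\gamma_e)=d(p,q)<2\rho$.

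I do not expect a genuine obstacle: the argument is a two-line combination of the empty-ball characterization of Delaunay faces with $\rho$-density. The only items requiring a moment of care are the ball-convention bookkeeping in the density step and fixing the meaning of ``geodesically embedded'' (namely that each edge realizes the minimal geodesic between its endpoints, so that its length is exactly the distance between them), as addressed above.
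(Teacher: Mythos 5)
Your proposal is correct and takes essentially the same approach as the paper: extract the empty witness ball, argue its radius is below $\rho$ by $\rho$-density, and bound the edge length via the triangle inequality. Your additional care about why $\length(\gamma_e)=d(p,q)$ (geodesic embedding / strong convexity) is a reasonable elaboration of a point the paper leaves implicit.
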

\begin{proof}
  Let $B(x,r)$ be a ball witnessing that $e=\{u,v\}$ is a edge.
  Then, $r< \rho$ since otherwise it contains $B(x,\rho)$ whose intersection with $P$ is empty contradicting the $\rho$-dense assumption.
  Then, $\length(\gamma_e)=d(u,v)\leq d(u,x)+d(x,v) = 2r < 2\rho$ as desired.
\end{proof}

\section{The refinement of a triangulation}
~\label{sec:Universality}
In this section we prove our universality Theorem~\ref{t:delaunayref}.
The proof of this theorem proceeds in two steps.
The first step, Proposition~\ref{p:refinement}, provides a density $\rho>0$ as well as a \emph{partition} of the corresponding Delaunay triangulation,  
showing that combinatorially a $\rho$-dense Delaunay triangulation refines a regular cell complex that refines the original triangulation $K$. 
The second step, Proposition~\ref{p:minor}, verifies that the edges of a subdivision of $K$ can be contracted in order to reach $K$.
The following is the definition of a partition we require, see~\cite[Sect. 15]{munkres84-elements}.

\begin{definition}
    Let $S$ be a surface and $K,K'$ be two simplicial complexes embedded in $S$. The embedding \emph{$|K'|$ is a subdivision of $|K|$} if the embedding of every face of $K'$ is contained in the embedding of some face of $K$, and the embedding of every face of $K$ is given by a union of the embeddings of finitely many faces from $K'$.
    For a face $A\in K$ we denote by $K'_A$ the subcomplex of $K'$ whose union is $|K'_A| = |A|\subseteq S$.
\end{definition}

Recall that for an embedded simplicial complex $K$, the \emph{open star of a face $A\in K$} is the open subset $\st^\circ(A,K)\subseteq |K|$ given by the union of the relative interiors of the faces containing $A$, including $A$ itself. The \emph{closed star $\st(A,K)$} is the closed set $\cl \st^\circ(A,K)$ or equivalently it is the union of the relative closures of all the faces containing $A$~\cite{munkres84-elements}.

\begin{definition}
    Let $K$ be a simplicial complex embedded in $S$. A family of closed discs with smooth boundaries $\mathcal{B}_0 = \{B_v \subseteq B'_v \subseteq B''_v \colon v\in K_0 \}$ is a \emph{layered vertex cover} if 
    \begin{enumerate}
        \item $B_v\subseteq \interior B'_v$ and $B'_v\subseteq \interior B''_v$.
        \item $B''_v\cap B''_u = \emptyset$ for every pair of distinct vertices $v,u\in K_0$.
        \item $v \in B_v\subseteq B'_v\subseteq B_v'' \subseteq \st^\circ(v,K)$.
    \end{enumerate}
\end{definition}
Later we will make the middle layer wide enough to guarantee the conclusion in Lemma~\ref{l:vertexedge-containment}, which is required in the proof of ``the first step" Proposition~\ref{p:refinement}.

The following intuitive and simple lemma guarantees that a layered vertex cover exists. 
\begin{lemma}
\label{l:vertexdisc}
Let $S$ be a Riemannian surface, $K$ an embedded triangulation of $S$. Then, there exists $\epsilon>0$ such that $\{B(v,\epsilon)\subseteq B(v,2\epsilon) \subseteq B(v,3\epsilon) \colon v\in K_0 \}$ is a layered vertex cover.
\end{lemma}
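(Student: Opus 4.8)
The goal is to produce, for any embedded triangulation $K$ of a Riemannian surface $S$, a uniform radius $\epsilon>0$ so that the three concentric geodesic balls $B(v,\epsilon)\subseteq B(v,2\epsilon)\subseteq B(v,3\epsilon)$ centered at the vertices $v\in K_0$ form a layered vertex cover. Conditions (1) and (2) of the definition are the easy part, and I would handle them first by a compactness argument; the real content is condition (3), namely the containment $B(v,3\epsilon)\subseteq \st^\circ(v,K)$, which must hold simultaneously for all vertices.

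\textbf{Step 1: containment in open stars.} For each vertex $v\in K_0$, the open star $\st^\circ(v,K)$ is an open subset of $S$ containing $v$. Since $S$ is a metric space, there is a radius $r_v>0$ with $B(v,r_v)\subseteq \st^\circ(v,K)$; one may take, for instance, $r_v$ slightly less than the distance from $v$ to the (closed) complement $S\setminus\st^\circ(v,K)$, which is positive because $v$ lies in the open set $\st^\circ(v,K)$. As $K$ is finite, set $r_0=\min_{v\in K_0} r_v>0$. Any $\epsilon$ with $3\epsilon<r_0$ then gives $B(v,3\epsilon)\subseteq \st^\circ(v,K)$ for every vertex $v$, establishing (3), and also $v\in B(v,\epsilon)$ trivially.

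\textbf{Step 2: disjointness of the outer layers.} The vertices of $K$ are finitely many distinct points in $S$, so $\delta := \min_{u\neq v} d(u,v)>0$. If we additionally require $6\epsilon<\delta$, then $B(v,3\epsilon)$ and $B(u,3\epsilon)$ are disjoint for distinct $u,v$, which is condition (2); the choice also forces $6\epsilon$-separation which is more than enough. Finally, for geodesic balls in a surface, $B(v,\epsilon)\subseteq \interior B(v,2\epsilon)$ and $B(v,2\epsilon)\subseteq \interior B(v,3\epsilon)$ because open balls are open and $B(v,\epsilon)\subseteq B(v,2\epsilon)$ with the closure of $B(v,\epsilon)$ contained in the larger open ball (indeed $\cl B(v,\epsilon)\subseteq B(v,2\epsilon)$, using that $S$ is a length space and the injectivity radius is positive, so closed balls of radius $\epsilon$ are compact and contained in the open ball of radius $2\epsilon$ provided $\epsilon$ is below the injectivity radius); this is condition (1). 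One subtlety to record is that these balls should be \emph{discs with smooth boundary}: taking $3\epsilon<\rinj(S)$ (or $<\rscv(S)$) guarantees each $B(v,3\epsilon)$ is a strongly convex embedded disc, hence so are the inner balls, and the metric sphere of radius $\epsilon$ is a smooth circle.

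\textbf{Combining.} Choose $\epsilon>0$ with $3\epsilon<\min\{r_0,\ \delta/2,\ \rinj(S),\ \rscv(S)\}$; then all three conditions hold simultaneously and each ball is a smoothly bounded disc, so $\{B(v,\epsilon)\subseteq B(v,2\epsilon)\subseteq B(v,3\epsilon)\colon v\in K_0\}$ is a layered vertex cover. \textbf{Expected main obstacle.} There is no deep obstacle here; the only point requiring a little care is justifying that the balls are embedded smooth discs rather than merely topological balls, which is why the injectivity/strong-convexity radii enter the bound on $\epsilon$. Everything else is finiteness of $K_0$ plus elementary point-set topology of the metric on $S$.
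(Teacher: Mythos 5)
Your proof is correct and follows essentially the same route as the paper's: bound $\epsilon$ by the pairwise distances between vertices (finiteness of $K_0$), by the radii fitting each ball inside its open star, and by the injectivity radius, then take the minimum. You are slightly more explicit than the paper about verifying condition~(1) and about ensuring the balls are embedded smooth discs, but the underlying argument is identical.
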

\begin{proof}    
    First, for any pair of distinct vertices $u,v\in K_0$ we have that $d(u,v)>0$. Since $K$ has finitely many vertices by setting $\epsilon_2 = \min_{u\neq v\in K_0}d(u,v)/2$ the second condition is satisfies.

    Since the open star $\st^\circ(v,K)$ is an open subset in $|K|$ and the latter is homeomorphic to $S$ we can conclude that $\st^\circ(v,K)$ is an open subset in $S$. Moreover, since it contains $v$ there exists $\epsilon_{3,v}>0$ such that $B(v,\epsilon_{3,v})\subseteq \st^\circ(v,K)$.
    Let $\epsilon_3=\min_{v\in K_0} \epsilon_{3,v}$.

    Finally, setting $\epsilon=\min\{\rinj/2,\epsilon_2,\epsilon_3\}/3$ we guarantee that all the desired conditions are satisfied.
\end{proof}

\begin{definition}
    Let $K$ be a simplicial complex piecewise smoothly embedded in $S$ and $\mathcal{B}_0$ a layered vertex cover. The embedding is \emph{transversal with respect to $\mathcal{B}_0$} if
    \begin{enumerate}
        \item for each edge $e=\{u,v\}$ its embedding $\gamma_e$ intersects $\partial B_v$ exactly once, at a point denoted $a_{e,v}$, and, 
        \item for each edge $e=\{u,v\}$ its embedding $\gamma_e$ is given by a concatenation $\gamma_{e,u}\circ \widetilde{\gamma_e}\circ \gamma_{e,v}$ where $\gamma_{e,u}$ is a smooth curve in $B_u$ connecting $a_{e,u}$ and $u$, $\gamma_{e,v}$ is smooth curve in $B_v$ connecting $v$ to $a_{e,v}$ and $\widetilde{\gamma_e}$ is a smooth curve connecting $a_{e,v}$ and $a_{e,u}$.
    \end{enumerate}
\end{definition}

In the following two lemmas we show the existence of a transversal embedding with respect to the layered vertex cover. 
First, we show that the cyclic order of intersections in a neighborhood of a vertex coincides with the one in its link. For it we recall that the \emph{link of a vertex $v$ in $K$} is the simplicial complex $\lk(v,K)=\{A\in K\colon v\notin A, A\cup \{v\}\in K\}$.

\begin{lemma}
\label{l:incidenceorder}
Let $S$ be a closed surface, $K$ an embedded triangulation of $S$,
$B \subseteq \st^\circ(v,K)$ a disc with smooth boundary containing $v$ in its interior.
For each edge $e=\{u,v\}$ let $a_u$ denote the first intersection of $\partial B$ along $\gamma_e$ when starting from $u$.
Then, the cyclic order of $\{a_u\colon \{v,u\}\in K\}$ coincides with that of the vertices in the link $\lk(v,K)$.
\end{lemma}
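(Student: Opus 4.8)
The plan is to compare two cyclic orders: the ``intrinsic'' cyclic order of the vertices of $\lk(v,K)$ (which is a cycle since $K$ triangulates a closed surface), and the cyclic order in which the curves $\gamma_e$, $e=\{u,v\}\in K$, exit the disc $B$. I would first pass to the open star $\st^\circ(v,K)$, which by the argument in Lemma~\ref{l:vertexdisc} is an open subset of $S$ homeomorphic to an open disc (here one uses that $K$ triangulates a surface, so the link of $v$ is a cycle and the closed star is a disc). Fix an orientation-preserving chart $\phi\colon \st^\circ(v,K)\to \R^2$ with $\phi(v)=0$; the combinatorial cyclic order of the link vertices is realized geometrically as the cyclic order of the corresponding triangles around $0$ in the planar picture, hence as the cyclic order of the edge-images $\phi(|e|)$ emanating from $0$.

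Next I would argue that passing from ``the cyclic order of the edges at $v$'' to ``the cyclic order of the exit points $a_u$ on $\partial B$'' does not change anything. For this, consider the subcomplex $\partial\st(v,K)$, a simple closed curve $C$ in $\st^\circ(v,K)$ enclosing $v$; its image $\phi(C)$ is a Jordan curve around $0$, and the edges $|e|$ meet $C$ in the link vertices in exactly the cyclic order dictated by the combinatorics. The disc $B$ is a smaller neighborhood of $v$ with $v\in\interior B\subseteq B\subseteq\st^\circ(v,K)$; I want to say that $\partial B$, being the boundary of a disc containing $v$ and contained in the region bounded by $C$, is ``parallel'' to $C$ in the annulus between $\partial B$ and $C$, and that each $\gamma_e$ restricted to this annulus is an arc from a point of $\partial B$ to the link vertex on $C$ lying in the closed triangles incident to $e$. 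Since the annulus $A$ bounded by $\partial B$ and $C$ is homeomorphic to $S^1\times[0,1]$, and the arcs $\widetilde\gamma_e\cap A$ are disjoint (they lie in distinct open triangles of $K$, which are disjoint open sets meeting $A$ in disjoint strips) simple arcs each joining one boundary circle to the other, a standard fact about disjoint spanning arcs in an annulus gives that they meet $\partial B$ and $C$ in the same cyclic order. Composing the two identifications yields the claim.

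The technical care needed is in the topology of that annular region. The main obstacle I expect is justifying rigorously that $\partial B$ is ``unknotted'' relative to $C$ — i.e.\ that the region between them really is an annulus and the $\gamma_e$ cross it monotonically (exactly once each, which is condition (1) in the transversal definition but here it is an input hypothesis via ``first intersection along $\gamma_e$''). One clean way to bypass the monotonicity subtlety: define the cyclic order of the $a_u$ purely via the first intersection point, push the disc $B$ isotopically out to $C$ inside $\st^\circ(v,K)$ keeping $v$ inside (possible since $\st^\circ(v,K)\setminus\{v\}$ deformation retracts onto $C$), and observe the cyclic order of first-intersection points is invariant under such an isotopy because at every stage the bounding curve separates $v$ from $C$ and each edge-image is a connected arc from $v$ out to its link vertex. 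At the end of the isotopy the first intersection point of $\gamma_e$ with the moved disc's boundary is precisely the link vertex of $e$ on $C$, so the two cyclic orders coincide. I would write the proof along these lines, invoking the Jordan curve theorem and the annulus's classification of disjoint essential spanning arcs as the only nontrivial topological inputs, and I would keep the exponential-chart / smoothness language minimal since the statement is purely topological.
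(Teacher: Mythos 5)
Your route is genuinely different from the paper's. The paper argues locally: it takes three consecutive edges $e,f,g$ at $v$, observes that the Jordan arc $\gamma_e\cup\gamma_g$ (from $u$ through $v$ to $u''$) separates the disc $\st(v,K)$ into two components, notes that $\gamma_f\setminus\{v\}$ lies entirely in the component containing $u'$ (edges of an embedded complex are disjoint away from shared vertices), and concludes that $a_{u'}$ lies in that component, hence between $a_u$ and $a_{u''}$ on $\partial B$. You instead make a global argument transporting the cyclic order along the annulus between $\partial B$ and $C=\partial\st(v,K)$, either by classifying disjoint spanning arcs or by an isotopy pushing $\partial B$ out to $C$. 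Both are reasonable in outline, but the paper's three-edge argument sidesteps the global bookkeeping you need.

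There are two concrete gaps in your version. First, you treat ``$\gamma_e$ meets $\partial B$ exactly once'' as an input hypothesis, writing that it is ``condition (1) in the transversal definition but here it is an input hypothesis via `first intersection along $\gamma_e$'.'' It is not: the lemma only posits a \emph{first} intersection (which exists because $u\notin B$, $v\in\interior B$), and the smooth embedding supplied by Theorem~\ref{t:smooth-emb} may well cross $\partial B$ several times. Indeed, Lemma~\ref{l:incidenceorder} is used \emph{to construct} the transversal embedding in Lemma~\ref{l:good-emb}, so transversality cannot be assumed here. Once you drop that assumption, $\gamma_e\cap A$ need not be a single spanning arc of the annulus, and the ``disjoint essential spanning arcs in an annulus'' classification does not directly apply. (A smaller slip: the $\gamma_e$ do not ``lie in distinct open triangles of $K$'' --- they lie in the $1$-skeleton, on the common boundary of two closed triangles; disjointness away from $v$ comes from the complex being embedded, not from living in disjoint open faces.) Second, the isotopy workaround still needs a real lemma: the first-intersection point $a_u(t)$ along the moving circle $\partial B_t$ is not a continuous function of $t$ in general --- as the circle sweeps past a tangency with $\gamma_e$, a pair of intersection points can be created or annihilated and the ``first'' one can jump. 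You would need to argue that the cyclic order of these (possibly jumping) first-intersection points is nonetheless preserved, e.g.\ by showing that any jump replaces $a_u(t)$ by a point in the same complementary arc of $\partial B_t\setminus(\gamma_{e'}\cup\gamma_{e''})$ for the neighbouring edges $e',e''$. The paper's local argument avoids exactly this issue.
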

\begin{proof}
Consider a counterclockwise orientation on the vertex  $v$ and the boundary $\partial B$.
Let $e=\{u,v\},f=\{u',v\},g=\{u'',v\}$ be consecutive edges incident on $v$ in counterclockwise order with $a_u,a_{u'},a_{u''}$
being their corresponding first intersection with $\partial B$ as in the statement.
We need to verify that $a_{u'}$ is between $a_u$ and $a_{u''}$ in the counterclockwise order along
$\partial B$. Since $K$ is a triangulation of a surface, the closed star $\st(v,K)$ is
a disc that is split into two components by the concatenation of $e$ and $g$.
Consequently, $f$ intersects $\partial B$ within the same connected component as its endpoint is, which is between $a_u$ and $a_{u''}$ in the desired order.
\end{proof}

\begin{figure}[!htb]
\minipage{0.30\textwidth}
\includegraphics[width=\linewidth]{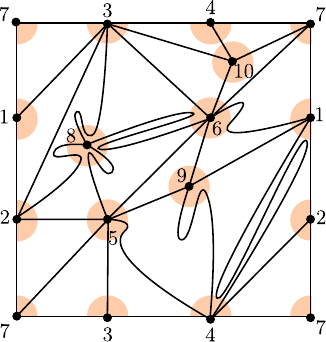}
\centering (1)
\endminipage\hfill
\minipage{0.30\textwidth}
\includegraphics[width=\linewidth]{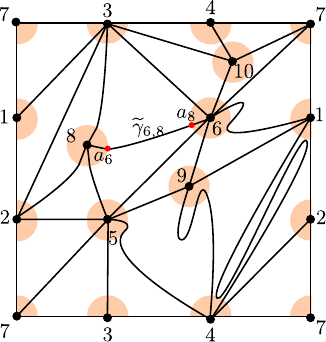}
\centering (2)
\endminipage\hfill
\minipage{0.30\textwidth}%
\includegraphics[width=\linewidth]{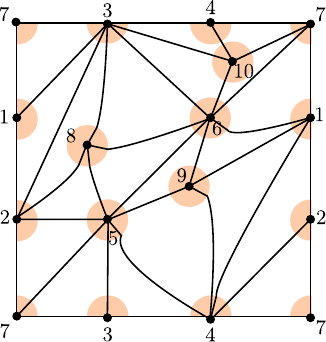}
\centering (3)
\endminipage
\caption{(1) Embedding of the flat torus with intersection in a neighborhood of a vertex. (2) Embedding in the flat torus with straightening of edges adjacent to vertex $8$. (3) A transversal embedding in the flat torus with respect to a layered vertex cover.}
\label{fig:goodemb}
\end{figure}

\begin{lemma}
    \label{l:good-emb}
    Let $K$ be a triangulation of $S$, then $K$ admits a piecewise smooth embedding in $S$ that is transversal with respect to some layered vertex cover.
\end{lemma}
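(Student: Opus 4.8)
\textbf{Proof plan for Lemma~\ref{l:good-emb}.}

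The plan is to start from \emph{any} smooth embedding of $K$ into $S$ (which exists by Theorem~\ref{t:smooth-emb}), fix a layered vertex cover $\mathcal{B}_0=\{B_v\subseteq B'_v\subseteq B''_v\}$ for it via Lemma~\ref{l:vertexdisc}, and then modify the embedding near each vertex and along each edge so that it becomes transversal with respect to (a possibly shrunk version of) $\mathcal{B}_0$. The modification is purely local, carried out one vertex at a time, so I would phrase the argument as a finite induction over the vertices $v\in K_0$, maintaining the invariant that after processing the first $k$ vertices the embedding is already transversal at each of them and unchanged outside $\bigcup B''_v$.

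First I would handle a single vertex $v$. Inside the disc $B''_v\subseteq \st^\circ(v,K)$ the picture is that of finitely many smooth arcs emanating from $v$, one for each edge $e=\{u,v\}\in K$, together with the $2$-faces between consecutive arcs; these arcs may wiggle and re-cross $\partial B_v$ several times before leaving. The key input is Lemma~\ref{l:incidenceorder}: the cyclic order along $\partial B_v$ of the \emph{first} crossing points $a_{e,v}$ agrees with the cyclic order of the link vertices $u$ in $\lk(v,K)$. Hence, after choosing on $\partial B_v$ points in this correct cyclic order (e.g.\ the $a_{e,v}$ themselves), I can replace, for each edge $e$, the portion of $\gamma_e$ lying inside $B_v$ — that is, the sub-arc from $v$ to its first crossing $a_{e,v}$ of $\partial B_v$ — by a \emph{straight radial segment} (a geodesic, or the image under $\exp_v$ of a line segment in $T_vS$, shrinking $B_v$ if necessary to stay within the injectivity/convexity radius) from $v$ to $a_{e,v}$. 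Because the cyclic order is preserved, these radial segments are pairwise disjoint except at $v$, and a standard isotopy of the disc $B_v$ rel boundary (supported in $B'_v\setminus B_v$ to reconnect smoothly to the untouched part of each $\gamma_e$ outside $B'_v$) carries the old arcs to the new ones; the $2$-faces are carried along, keeping an embedding. This produces arcs that meet $\partial B_v$ exactly once, giving condition (1) of transversality, and simultaneously realizes the concatenation $\gamma_e=\gamma_{e,u}\circ\widetilde{\gamma_e}\circ\gamma_{e,v}$ of condition (2), with $\gamma_{e,v}$ the radial segment in $B_v$. See Figure~\ref{fig:goodemb}(2–3) for the flat torus model case.

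Doing this at every vertex and concatenating the (disjointly supported) isotopies yields the desired transversal piecewise smooth embedding; the middle and outer layers $B'_v,B''_v$ only serve to provide room for the isotopy and to keep modifications at different vertices independent, and one may shrink all three layers uniformly at the start so that Lemma~\ref{l:vertexdisc} still applies. I expect the main obstacle to be the topological bookkeeping in the middle step: verifying that replacing the in-disc sub-arcs by radial segments and extending by an ambient isotopy genuinely keeps the map a \emph{global} embedding — i.e.\ that no new self-intersections of $|K|$ are created inside $B'_v$ and that the $2$-cells glue up correctly. This is where Lemma~\ref{l:incidenceorder} is essential (it is exactly what rules out the arcs from two different edges being forced to cross after straightening), and where one must be a little careful that the arc of $\gamma_e$ between $a_{e,v}$ and $a_{e,u}$ stays outside all the vertex discs after the isotopy — achieved by first choosing $B'_v$ small enough that the ``first crossing'' structure of Lemma~\ref{l:incidenceorder} is already visible on $\partial B_v$, and only then running the isotopy supported in $B'_v$. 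The smoothness at the junction points $a_{e,v}$ is a routine matter of choosing the isotopy to match derivatives there, and is why the embedding is only claimed to be \emph{piecewise} smooth.
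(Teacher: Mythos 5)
Your approach is essentially the same as the paper's: start from a smooth embedding (Theorem~\ref{t:smooth-emb}), take a layered vertex cover (Lemma~\ref{l:vertexdisc}), use Lemma~\ref{l:incidenceorder} to align the cyclic order of first crossings $a_{e,v}$ with the link, and replace the portion of each $\gamma_e$ from $a_{e,v}$ to $v$ by the radial geodesic inside a sufficiently small $B_v$. One small imprecision: you write ``because the cyclic order is preserved, these radial segments are pairwise disjoint except at $v$'' --- the pairwise disjointness is actually a consequence of choosing $B_v$ within the injectivity/strong-convexity radius (which you do mention parenthetically), while the cyclic-order agreement from Lemma~\ref{l:incidenceorder} is what ensures the straightened complex is combinatorially the same $K$ (i.e.\ the $2$-faces reassemble correctly); the paper keeps these two roles cleanly separate.
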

\begin{proof}
    By Theorem~\ref{t:smooth-emb} $K$ admits a smooth embedding and 
    let $\mathcal{B}_0$ be the layered vertex cover given by Lemma~\ref{l:vertexdisc}. 
    For each edge $e=\{u,v\} \in K$ let $\gamma_e$ denote its embedding in $S$.
Consider $a_u$ the first intersection point of $\partial B_v$ with $\gamma_e$ when traversed starting from the vertex $u$.
By Lemma~\ref{l:incidenceorder} the cyclic order of collection $\{a_u\colon \{u,v\} \in K\}$ coincides with that of the vertices in $\lk(v,K)$.
As $S$ is Riemannian, by taking $\epsilon>0$ sufficiently small, in the disc $B_v=B(v,\epsilon)$ the unique geodesics from any two of its boundary points to $v$ intersect only in $v$; now we replace the embedding $\gamma_e$ after $a_u$ with the smooth geodesic curve joining $a_u$ with $v$, see Figures~\ref{fig:goodemb}(1) and \ref{fig:goodemb}(2).
Since the cyclic order around $v$ is preserved the resulting triangulation, with the tips of the edges straightened, coincides with $K$.
We repeat this procedure at every vertex of $K$, see Figure~\ref{fig:goodemb}(3).
Finally, let $\widetilde{\gamma}_e$ denote the segment of $\gamma_e$ joining its unique intersection points in $B_v$ and $B_u$.
\end{proof}

\begin{definition}
    Let $K$ be a piecewise smooth embedding into $S$ that is transversal with respect to the layered vertex cover $\mathcal{B}_0$. A family of closed discs $\mathcal{B}_1 = \{B_e \subseteq S\colon e\in K_1\}$ is an \emph{edge cover} if it the satisfies the following four conditions: 
    \begin{enumerate}
        \item $\widetilde{\gamma_e}\subseteq \interior B_e$.
        \item $B_e \subseteq \st^\circ(e,K)$.
        \item $B_e\cap B_{e'} = \emptyset$ for every pair of distinct edges $e,e'\in K_1$.
        \item $B_e\cap B''_v = \emptyset$ if $v\notin e$.
    \end{enumerate}
\end{definition}

\begin{lemma}
\label{l:edge-neigh}
    Let $K$ be a simplicial complex piecewise smoothly embedded in $S$ that is transversal with respect to the layered vertex cover $\mathcal{B}_0$. Then, there exists $\delta>0$ such that $\{B(\widetilde{\gamma_e},\delta) \colon e\in K_1\}$ is an edge cover.
\end{lemma}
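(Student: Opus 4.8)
The plan is to produce the desired $\delta$ by taking a minimum of finitely many positive quantities, one guaranteeing each of the four defining properties of an edge cover, exactly as in the proof of Lemma~\ref{l:vertexdisc}. The only genuine subtlety is that the neighborhoods $B(\widetilde{\gamma_e},\delta)$ are tubular-type neighborhoods around arcs, so I must first recall the existence of tubular neighborhoods of smooth simple arcs (this is the $I(\gamma,\epsilon)$ and $\tilde I(\gamma,\epsilon')$ construction quoted from \cite{oneill83-tubular} in the preliminaries) and observe that for small enough radius $B(\widetilde{\gamma_e},\delta)$ is an embedded closed disc with smooth boundary. Note $\widetilde{\gamma_e}$ is a compact smooth simple arc (its endpoints $a_{e,u},a_{e,v}$ are distinct and it is disjoint, in its interior, from the vertex discs), so this applies; call the resulting threshold $\delta_0$.

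Next I would secure each containment in turn. For item (1), $\widetilde{\gamma_e}\subseteq\interior B(\widetilde{\gamma_e},\delta)$ holds for every $\delta>0$, so there is nothing to do. For item (2): $\widetilde{\gamma_e}$ is a compact subset of the open set $\st^\circ(e,K)\subseteq S$ (open in $S$ since $|K|\cong S$, as argued in Lemma~\ref{l:vertexdisc}), hence has positive distance $\delta_{2,e}:=d(\widetilde{\gamma_e},\, S\setminus\st^\circ(e,K))>0$ from its complement; take $\delta_2=\min_{e\in K_1}\delta_{2,e}$. For item (3): for distinct edges $e,e'$ the arcs $\widetilde{\gamma_e},\widetilde{\gamma_{e'}}$ are disjoint compact sets (the embedding is injective and the middle pieces avoid the vertex discs), so $d(\widetilde{\gamma_e},\widetilde{\gamma_{e'}})>0$; let $\delta_3$ be half the minimum of these finitely many distances, so that the $2\delta_3$-neighborhoods are pairwise disjoint. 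For item (4): if $v\notin e$, then $\widetilde{\gamma_e}$ and the closed disc $B''_v$ are disjoint compact sets — indeed $\widetilde{\gamma_e}\subseteq\st^\circ(e,K)$ which does not meet $B''_v\subseteq\st^\circ(v,K)$ when $v\notin e$, because a face containing $e$ and a face containing $v$ that share a point would force $v$ to lie in a common face with $e$ — so their distance is positive; take $\delta_4$ to be the minimum over the finitely many such pairs.

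Finally set $\delta=\min\{\delta_0,\delta_2,\delta_3,\delta_4\}>0$. Then each $B_e:=B(\widetilde{\gamma_e},\delta)$ is an embedded closed disc with smooth boundary, and properties (1)--(4) hold by construction, so $\{B_e\colon e\in K_1\}$ is an edge cover. I expect the main obstacle to be purely expository: being careful that $\widetilde{\gamma_e}$ really is a smooth simple arc disjoint from the vertex discs and from the other middle-arcs, so that a small metric neighborhood of it is genuinely an embedded disc with smooth boundary; once that is pinned down, everything else is the standard "finite minimum of positive distances" argument. I would also remark, if needed downstream, that $\delta$ can be shrunk further without affecting any of the four properties, which is what lets Lemma~\ref{l:vertexedge-containment} later impose an additional width requirement.
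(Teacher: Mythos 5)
Your overall plan—take a finite minimum of positive distances and invoke the tubular neighborhood theorem to ensure $B(\widetilde{\gamma_e},\delta)$ is an embedded disc—is exactly the paper's approach, and your handling of properties (1)–(3) matches the paper's. However, your justification of property (4) contains an error. You argue that $\st^\circ(e,K)$ and $\st^\circ(v,K)$ are disjoint when $v\notin e$, but this is false in general: if a triangle $F\in K$ contains both $e$ and $v$ (equivalently, $v\in\lk(e,K)$), then $\mathrm{relint}(|F|)$ lies in both open stars, so they intersect. Your parenthetical ``a face containing $e$ and a face containing $v$ that share a point would force $v$ to lie in a common face with $e$'' is correct as far as it goes, but it is not a contradiction—such a common face typically exists—so the inclusion $\widetilde{\gamma_e}\subseteq\st^\circ(e,K)$ alone does not yield disjointness from $B''_v$.

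The argument the paper actually uses is simpler and avoids the open star of $e$ entirely: $\widetilde{\gamma_e}$ sits on the image $\gamma_e$ of the edge $e$ under the (piecewise smooth) embedding, and $\gamma_e$ is disjoint from $\st^\circ(v,K)$ whenever $v\notin e$. Indeed, any point of the geometric edge $|e|$ lies in the relative interior of a face contained in $e$, and no face contained in $e$ contains $v$; since the embedding is a homeomorphism, the images $\gamma_e$ and $\st^\circ(v,K)$ in $S$ remain disjoint. As $B''_v\subseteq\st^\circ(v,K)$, this gives $d(\widetilde{\gamma_e},B''_v)>0$, and your proof then goes through verbatim. So the gap is a local misstep in reasoning, not a structural one: replace the claim about open stars being disjoint with the observation that the edge image itself avoids $\st^\circ(v,K)$, and your argument coincides with the paper's.
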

\begin{proof}
    The curve $\widetilde{\gamma_e}$ is contained in the interior of the tubular neighborhood with caps $\interior\widetilde{I}(\widetilde{\gamma_e})$ and consequently by setting $\delta \leq d(\widetilde{\gamma_e},\partial \widetilde{I}(\gamma_e)) > 0$ we guarantee that the resulting set is an embedded disc. Property (1) holds by construction, and small enough $\delta$ clearly guarantees (2) as well. 
    
    Since $\widetilde{\gamma_e}$ and $\widetilde{\gamma_{e'}}$ are disjoint for every pair of distinct edges $e,e'\in K$, then by setting $\delta 
    \leq d(\widetilde{\gamma_e},\widetilde{\gamma_{e'}})/2$ property (3) holds. 
    
    Finally, as $\gamma_e \cap B''_v = \emptyset$ for $v\notin e$ since $B''_v\subseteq \st^\circ(v,K)$ then by setting $\delta \leq d(\widetilde{\gamma_e},B''_v)>0$ also property (4) is satisfied.
\end{proof}

\begin{lemma}
    \label{l:del-neigh}
    Let $S$ be a closed connected Riemannian surface, 
    let $X \subseteq D\subseteq S$ be a pair of discs in $S$ and $P\subseteq S$ a $\rho$-dense finite set such that $\Del(P)$ is a triangulation of $S$. 
    If $2\rho < \min \{d(X,\partial D), \rscv(S) \}$, then the restriction of $\Del(P)$ to the triangles all whose vertices are in $D$, denoted by $\Del_D$, 
    contains a unique maximal disc subcomplex $\Del_{D,X}$ containing $X$. Further, all the boundary vertices of $\Del_{D,X}$ lie outside of $X$.
\end{lemma}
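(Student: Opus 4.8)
The plan is to build $\Del_{D,X}$ directly as the union of a suitable family of triangles of $\Del_D$, and then verify it is a disc containing $X$ with boundary avoiding $X$. First I would argue that $|\Del_D|$, the union of the closed triangles of $\Del(P)$ all of whose vertices lie in $D$, actually contains the neighborhood $B(X,\rho)$ (or at least contains $X$ together with a collar around it): indeed, any point $y\in X$ lies in some triangle $\tau$ of $\Del(P)$ since $\Del(P)$ triangulates $S$, and by the length bound (the Lemma just before Section~3, edges have length $<2\rho$) every vertex of $\tau$ is within $2\rho$ of $y$, hence within $d(X,\partial D)$ of $X$, hence inside $D$ by the hypothesis $2\rho<d(X,\partial D)$; so $\tau$ is a triangle of $\Del_D$ and $y\in|\Del_D|$. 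This shows $X\subseteq|\Del_D|$, and the same computation applied to points of $B(X,\varepsilon)$ for small $\varepsilon$ shows a neighborhood of $X$ is covered.

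Next I would define $\Del_{D,X}$ to be the connected component of the subcomplex $\Del_D$ that contains $X$, and claim it is a disc. The key point is that $|\Del_D|$ is a compact surface with boundary embedded in $S$: it is a union of closed $2$-simplices of a triangulation of $S$, so it is a $2$-manifold with boundary away from the vertices, and at each vertex one checks, using that $\st(v,\Del(P))$ is a disc (Theorem~\ref{t:deltriang}), that the link of $v$ inside $\Del_D$ is a union of arcs, so $|\Del_D|$ is still a manifold with boundary there. To see the component containing $X$ is in fact a \emph{disc} and not a higher-genus surface or an annulus wrapping around a handle of $S$, I would invoke strong convexity: since $2\rho<\rscv(S)$ and every triangle of $\Del(P)$ has edges of length $<2\rho$, each triangle is contained in a strongly convex ball; a standard argument (or the cited results of Leibon, Dyer et al.) then shows the union of triangles meeting a strongly convex set of radius $<\rscv(S)$ is itself simply connected, because it lifts to the geodesically convex region. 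Concretely, $D$ being a disc with $\diam$ controlled and $2\rho$ small forces $|\Del_{D,X}|$ to lie inside a strongly convex (hence contractible) ball, so it is a planar surface with one boundary component, i.e.\ a disc. Uniqueness of the maximal such disc containing $X$ then follows because any disc subcomplex of $\Del_D$ containing $X$ is connected and contained in the component of $\Del_D$ through $X$, and that component we have just shown to be a disc; so it \emph{is} the maximal one.

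Finally, for the statement that all boundary vertices of $\Del_{D,X}$ lie outside $X$: suppose $v\in X$ is a vertex of $\Del_{D,X}$ lying on its boundary. Every triangle of $\Del(P)$ incident to $v$ has all vertices within $2\rho<d(X,\partial D)$ of $v\in X$, hence inside $D$, so all of $\st(v,\Del(P))$ is a subcomplex of $\Del_D$; since $\st(v,\Del(P))$ is a disc containing $v$ in its interior and is connected to $X$ through $v$, it lies in the component $\Del_{D,X}$, which forces $v$ to be an interior vertex of $\Del_{D,X}$, a contradiction. Thus no vertex of $X$ is a boundary vertex.

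\medskip

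The step I expect to be the main obstacle is pinning down rigorously that the component of $\Del_D$ containing $X$ is a \emph{disc} rather than some other surface-with-boundary: this is exactly where the strong convexity hypothesis $2\rho<\rscv(S)$ must be used, and making the lifting/convexity argument clean — rather than hand-waving "everything is small so it's planar" — is the delicate part. Everything else (the length bound forcing stars to be inside $D$, the manifold-with-boundary check at vertices, the boundary-vertex claim) is a routine consequence of Theorem~\ref{t:deltriang} and the edge-length lemma.
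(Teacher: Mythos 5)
There is a genuine gap at the heart of your argument, exactly at the step you yourself flag as delicate. You claim that $|\Del_D|$ is a $2$-manifold with boundary because "the link of $v$ inside $\Del_D$ is a union of arcs, so $|\Del_D|$ is still a manifold with boundary there." That inference is false: the link of $v$ in $\Del_D$ is a \emph{subgraph} of the cycle $\lk(v,\Del(P))$, hence either the whole cycle or a disjoint union of one or more arcs. If it breaks into two or more disjoint arcs, then $v$ is a \emph{pinch vertex} where several half-discs meet at a point, and $|\Del_D|$ is not a manifold with boundary there. Consequently the connected component of $\Del_D$ through $X$ need not be a disc at all -- it can be a wedge of several discs glued along pinch vertices -- so your subsequent identification of that component with the sought $\Del_{D,X}$, and the uniqueness and boundary-vertex arguments built on it, do not yet apply to a well-defined object.

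The paper resolves this by a different decomposition. It first shows $\Del_D$ is simply connected (any edge-cycle lies in $D$, bounds a disc in $D$, and the triangles of $\Del(P)$ filling that disc are again in $\Del_D$), hence is a \emph{cactus}: a forest-like union of maximal simplicial discs, any two of which are disjoint or meet in a single vertex. It then \emph{defines} $\Del_{D,X}$ to be the unique maximal disc of this cactus decomposition containing $X$, not the connected component. The fact that $X$ really sits inside one such maximal disc (rather than straddling a pinch vertex) follows from the distance computation that you also carry out correctly: every triangle of $\Del(P)$ meeting a point of $X$ has all vertices within $2\rho<d(X,\partial D)$ of that point, hence lies in $\Del_D$, so $X$ lies in the interior of $|\Del_D|$ and avoids the pinch locus. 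Your boundary-vertex argument is sound in spirit and matches the paper's, but it must be rerun against the cactus picture: for $v\in X$ a vertex, $\st(v,\Del(P))$ is a disc contained in $\Del_D$, so $v$ is interior to the unique maximal disc containing $\st(v)$, and that disc must be $\Del_{D,X}$ since they share the non-pinch vertex $v$. The "strong convexity forces the component to lift to a convex region and hence be a disc" sketch you offer is not what is used, and as stated it does not rule out pinch vertices either.
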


\begin{proof}
As $2\rho < \rscv(S)$,
every cycle on the edges of $\Del_D$ is fully contained in the disc $D$,
hence so is its inner connected component disc $C$. Since $\Del(P)$ is a triangulation of $S$, it has a subcomplex that triangulates $C$, and its triangles are present in $\Del_D$ because they are contained in $D$.
Thus, $\Del_D$ is a simply connected planar pure $2$-dimensional complex, hence a \emph{cacti}, see e.g.~\cite{cacti-cohen2002homotopy}, namely the union of maximal simplicial discs, every two of them are either disjoint or intersect in a single vertex, and the bipartite graph whose vertices correspond to the set $A$ of discs and the set $B$ of their intersection points and whose edges $ab$ correspond to incidences $b\in a$ for $(a,b)\in A\times B$, is a forest.

Denote by $\Del_{D,X}$ the maximal disc in the  decomposition that contains $X$. 
It is left to show that $\Del_{D,X}$ is well defined. Indeed, for every point 
$x\in X$, each vertex in a triangle $A$ in $\Del(P)$ containing $x$ is of distance at most $2\rho$ from $x$. Since $2\rho <\rscv$ the embedding of $A$ is contained in $B(x,2\rho)$ since this last one is strongly convex. Moreover, given that $2\rho < d(x,\partial D)$ the ball $B(x,2\rho)$, and consequently $A$ as well, is contained in $D$. Thus, the triangle $A$ is in $\Del_D$, and consequently there exists a unique disc component $\Del_{D,X}$ in the  that contains $X$. 

Further, as $2\rho < d(X,\partial D)$, for a vertex $v\in\Del(P)$ lying in $X$, all its neighbors are in $D$, hence such $v$ can not be on the boundary of $\Del_{D,X}$.
\end{proof}

\begin{lemma}
    \label{l:vertexedge-containment}
    Let $B\subseteq B'\subseteq B''\subseteq S$ closed discs with smooth boundary such that $B\subseteq \interior B'$, $B'\subseteq \interior B''$, $\rho>0$ and $P\subseteq S$ a $\rho$-dense finite point set such that $\Del(P)$ is a triangulation of $S$.
    If $2\rho < \min \{d(B,\partial B'), d(B',\partial B''), \rscv\}$, then 
    the vertex set of $\partial \Del_{B'',B}$ is contained in $B''\setminus B'$ and every edge between boundary vertices of $\Del_{B'',B}$ is 
    disjoint from $B$.
\end{lemma}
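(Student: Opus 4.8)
The plan is to bootstrap from Lemma~\ref{l:del-neigh}. Since $B\subseteq B'$ forces $d(B,\partial B'')\ge d(B',\partial B'')>2\rho$, that lemma applies with $X=B$ and $D=B''$, providing the cactus decomposition of $\Del_{B''}$ into maximal simplicial discs, pairwise meeting in at most one vertex, together with the distinguished disc $\Del_{B'',B}$ whose underlying space contains $B$. Observe that every vertex of $\Del_{B''}$ lies in $B''$ by the very definition of $\Del_{B''}$, so the inclusion in $B''$ is automatic; hence it remains to prove that (i) no boundary vertex of $\Del_{B'',B}$ lies in $B'$, and (ii) every edge of $\Del_{B'',B}$ joining two boundary vertices has embedding disjoint from $B$.

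For (i) I would argue by contradiction: suppose $v\in B'$ is a boundary vertex of $\Del_{B'',B}$. Since $\Del(P)$ is geodesically embedded (Theorem~\ref{t:deltriang}), every edge at $v$ has embedded length strictly less than $2\rho$, so the whole closed star $\st(v,\Del(P))$ is contained in $B(v,2\rho)$; this ball is strongly convex (as $2\rho<\rscv$) and contained in $B''$ (as $v\in B'$ and $2\rho<d(B',\partial B'')$), so every triangle through $v$ lies in $\Del_{B''}$, that is, $\st(v,\Del(P))\subseteq \Del_{B''}$. Now I invoke the cactus structure: the triangles around $v$ form a fan in which consecutive triangles share an edge, and two distinct maximal discs share at most one vertex, so all triangles of $\st(v,\Del(P))$ lie in a single maximal disc $D_v$. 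Since $v$ is a boundary vertex of the simplicial disc $\Del_{B'',B}$, it lies in some triangle of $\Del_{B'',B}$, which then also lies in $D_v$; as a triangle lies in a unique maximal disc, $D_v=\Del_{B'',B}$. But the full link of $v$ is a cycle contained in $D_v=\Del_{B'',B}$, so $v$ is an \emph{interior} vertex of $\Del_{B'',B}$ --- a contradiction. Hence the vertices of $\partial\Del_{B'',B}$ lie in $B''\setminus B'$.

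Claim (ii) is then quick metric bookkeeping. Let $e=\{u,w\}$ be such an edge; by (i), $u,w\notin B'$. If a point $z$ of its geodesic embedding $\gamma_e$ lay in $B\subseteq\interior B'$, then a minimal geodesic from $u$ to $z$ (which exists by Hopf--Rinow) would connect the open set $S\setminus B'$ to the open set $\interior B'$ and hence, being connected, meet $\partial B'$ at some point $y$, giving $d(u,z)\ge d(y,z)\ge d(\partial B',B)>2\rho$; but $\gamma_e$ is a minimal geodesic of length less than $2\rho$, so $d(u,z)<2\rho$, a contradiction. Therefore $\gamma_e\cap B=\emptyset$.

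I expect the only genuine subtlety to be the combinatorial-topology step inside (i): passing from ``$\st(v,\Del(P))\subseteq\Del_{B''}$'' to ``$v$ is interior to $\Del_{B'',B}$''. This requires using the cactus decomposition correctly --- that a full vertex star, being an edge-connected fan of triangles around $v$, cannot be split across two maximal discs, and that $v$ being a boundary vertex of $\Del_{B'',B}$ still forces a triangle of that disc into the star, pinning down which maximal disc $D_v$ is. Everything else --- the length bound $<2\rho$ for Delaunay edges, strong convexity of small balls, and the boundary-crossing argument --- is routine once Lemma~\ref{l:del-neigh} and Theorem~\ref{t:deltriang} are in hand.
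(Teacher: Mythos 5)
Your proof is correct, but you take a more laborious route for the boundary-vertex claim than the paper does. The paper's argument for part (i) is essentially one line: apply Lemma~\ref{l:del-neigh} \emph{twice}, with $X=B$ and with $X=B'$ (both with $D=B''$; the hypotheses $2\rho<d(B',\partial B'')$ and $2\rho<\rscv$ make the second application legitimate). Because $\Del_{B'',B'}$ is a maximal disc of $\Del_{B''}$ containing $B'\supseteq B$, and because the maximal disc containing $B$ is unique, one has $\Del_{B'',B}=\Del_{B'',B'}$. The ``Further'' clause of Lemma~\ref{l:del-neigh}, applied with $X=B'$, then says directly that the boundary vertices of $\Del_{B'',B'}=\Del_{B'',B}$ lie outside $B'$. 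You instead apply the lemma only with $X=B$ and then reprove the boundary-vertex exclusion at the larger scale $B'$ by a cactus/star/link contradiction --- this is sound, and the combinatorial step you flag as the ``genuine subtlety'' (a vertex star forms a fan whose triangles share edges, hence live in a single maximal disc, and a triangle determines its maximal disc uniquely) is carried out correctly, but it effectively duplicates the work already packaged into the ``Further'' clause of the lemma. Your treatment of part (ii), the edge-disjointness, is the same metric argument as the paper's (any edge of length $<2\rho$ emanating from a vertex outside $B'$ cannot reach $B$ when $d(B,\partial B')>2\rho$), just spelled out with the intermediate crossing point on $\partial B'$.
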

\begin{proof}

As $B\subseteq B'$, 
the maximal discs $\Del_{B'',B}$ and $\Del_{B'',B'}$ from Lemma~ \ref{l:del-neigh} satisfy, by definition, that they are equal, and their boundary vertices lie in $B''\setminus B'$. 

    Now, let $e=\{u,v\}$ be a Delaunay edge with endpoints in $B''\setminus B'$. Then,  
    $\gamma_e \cap B = \emptyset$  
    since $d(B,\partial B')>2\rho$ and all edges have length at most $2\rho$.
\end{proof}

\begin{figure}[!htb]
\minipage{0.30\textwidth}
\includegraphics[width=\linewidth]{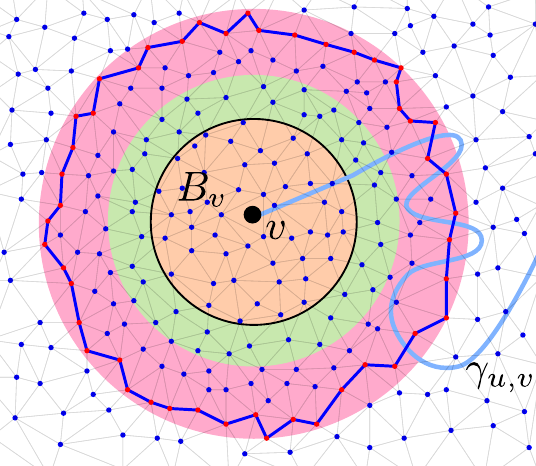}
\centering (1)
\endminipage\hfill
\minipage{0.30\textwidth}
\includegraphics[width=\linewidth]{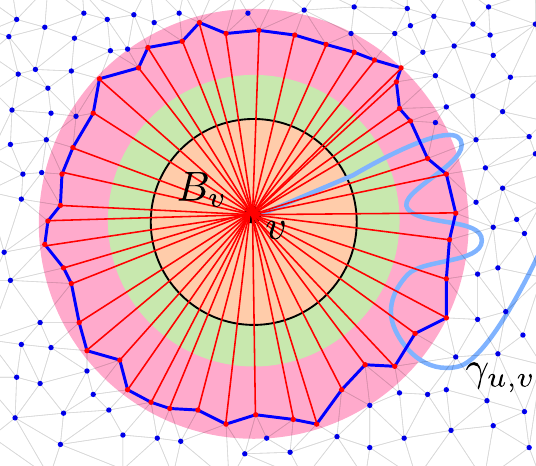}
\centering (2)
\endminipage\hfill
\minipage{0.30\textwidth}%
\includegraphics[width=\linewidth]{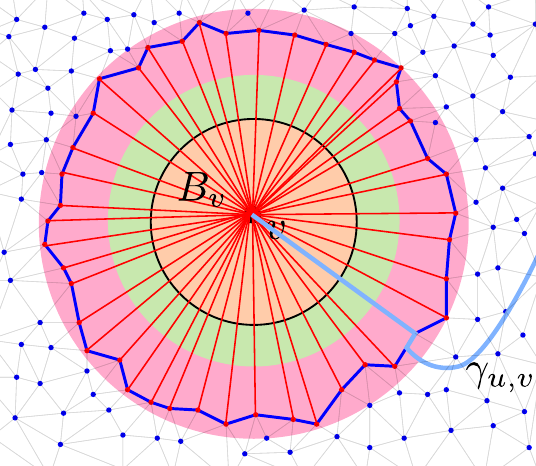}
\centering (3)
\endminipage
\caption{
(1) The Delaunay triangulation in a neighborhood of the vertex $v$, where layered vertex cover of $v$ is depicted. The vertices of $\partial \Del_{B''_v,B_v}$ (marked in red) and its edges (marked in blue) are disjoint from $B_v$. The embedding $\gamma_{u,v}$ is shown in light blue.
(2) The triangulation obtained after contracting edges inside $D_v$. The resulting simplicial complex is a cone. Note that the vertices on the boundary of $D_v$ form a strict subset of those on the boundary of $\Del_{B''_v,B_v}$.
(3) The embedding of the edge $\gamma_{u,v}$ is modified: after intersecting $\partial D_v$, it continues along the shortest path to the vertex $v$.}
\label{fig:delneigh}
\end{figure}

\begin{figure}
    \centering
    \includegraphics[width=1\linewidth]{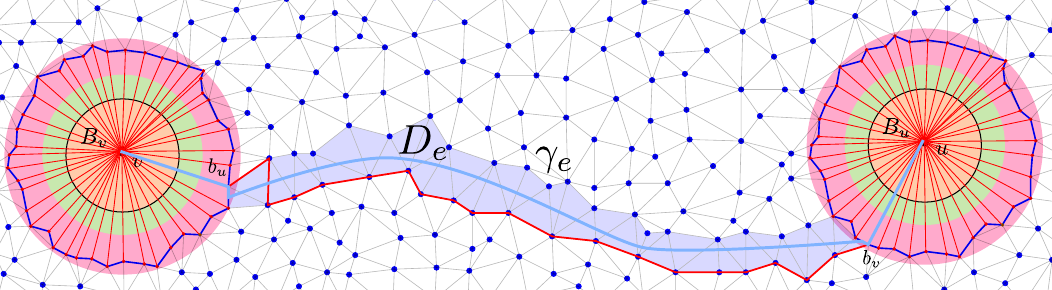}
    \caption{Edge approximation in a Delaunay triangulation inside the edge neighborhood $\Del_e$.}
    \label{fig:deledge}
\end{figure}

\begin{proposition}
\label{p:refinement}
    Let $S$ be a closed Riemannian manifold and $K$ a triangulation of $S$. There exist a piecewise smooth embedding of $K$ in $S$ and a density $\rho>0$ such that the following holds. If $P\subseteq S$ is a $\rho$-dense finite set locally in general position, then $\Del(P)$ admits a sequence of edge contractions reaching a subdivision of $K$.
\end{proposition}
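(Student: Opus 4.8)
The plan is to realize $K$ by an embedding adapted to a layered covering and then to contract $\Del(P)$ onto a subdivision of it region by region, the regions being a small disc around each vertex, a band around the middle of each edge, and a disc in the middle of each triangle. First, fix the piecewise smooth transversal embedding of $K$ from Lemma~\ref{l:good-emb}, together with a layered vertex cover $\mathcal{B}_0=\{B_v\subseteq B'_v\subseteq B''_v:v\in K_0\}$ and, via Lemma~\ref{l:edge-neigh}, an edge cover $\mathcal{B}_1=\{B_e:e\in K_1\}$; after a harmless shrinking we may also fix a closed disc $T_\sigma$ in the relative interior of each triangle $|\sigma|$, $\sigma\in K_2$, so that the three families $\{B''_v\}$, $\{B_e\}$ and $\{T_\sigma\}$ cover $S$ with all pairwise overlaps empty or arcs. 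Next pick $\rho>0$ with $2\rho$ strictly less than $\rdel(S)$, $\rscv(S)$ and each of the finitely many positive distances that make Lemmas~\ref{l:del-neigh}, \ref{l:vertexedge-containment} and \ref{l:edge-neigh} applicable to this configuration. For $\rho$-dense $P$ locally in general position, Theorem~\ref{t:deltriang} makes $\Del(P)$ a geodesically embedded triangulation of $S$, and Lemmas~\ref{l:del-neigh} and \ref{l:vertexedge-containment} equip each region with a maximal simplicial disc subcomplex of $\Del(P)$: over each vertex $v$ the disc $D_v:=\Del_{B''_v,B_v}$ with $B_v\subseteq|D_v|$, all boundary vertices of $D_v$ in $B''_v\setminus B'_v$ and all boundary edges of $D_v$ disjoint from $B_v$; over each edge band a disc $\Del_e$ with $\widetilde{\gamma_e}\subseteq|\Del_e|$; and over each $T_\sigma$ a disc $\Del_\sigma$. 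By the cover axioms these subcomplexes cover $\Del(P)$ and meet pairwise only in boundary arcs.

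The contractions are then performed region by region, always confined to the interior of a region so that the shared boundary arcs, hence the way the pieces fit together, are preserved. Inside a vertex region, the disc $B_v$ contains a Delaunay vertex $\hat v$, which by Lemma~\ref{l:del-neigh} is an interior vertex of $D_v$, and indeed lies many Delaunay edges away from $\partial D_v$ because $B''_v$ is wide compared to $\rho$. Iterating Lemma~\ref{l:interior-edge} inside $D_v$ --- it supplies a contractible edge incident to an interior vertex whenever one exists --- we reduce $D_v$ to a cone with apex $\hat v$ whose base cycle consists of (a subset of) the boundary vertices of the original $\Del_{B''_v,B_v}$, as in Figure~\ref{fig:delneigh}(2). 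We then re-embed this cone as in Figure~\ref{fig:delneigh}(3): $\hat v$ is placed at $v$, the cone edge meeting each incident edge $e=\{u,v\}$ of $K$ is straightened onto the shortest geodesic to $v$ (which simultaneously re-selects the embedding of $\gamma_e$ near $v$), and the arc of the base cycle lying between two consecutive such cone edges is pushed into the triangle of $K$ that they bound; the cyclic orders match by Lemma~\ref{l:incidenceorder}. Inside an edge region, the same iteration contracts $\Del_e$ onto a triangulated strip following $\widetilde{\gamma_e}$, whose spine we realize along $\widetilde{\gamma_e}$, joined to the geodesic stubs already fixed inside the two adjacent vertex regions, and whose triangles are split between the two faces of $K$ at $e$, as in Figure~\ref{fig:deledge}. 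Inside a triangle region, $\Del_\sigma$ is contracted onto a single triangle, placed inside $|\sigma|$. Since every contraction is interior, gluing the re-embedded pieces along their untouched shared arcs produces a piecewise smooth embedding of the fully contracted complex $L$ with $|L|=|K|$ and with every face of $L$ contained in a single closed face of $K$; thus $L$ is a subdivision of $K$ and $\Del(P)$ edge-contracts onto it, as required.

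The main obstacle is the combinatorial reduction in each region, together with its compatibility with the geometry. Lemma~\ref{l:interior-edge} asserts only the existence of \emph{some} contractible interior edge, so one must show that, by choosing such edges appropriately and in the right order, each region can genuinely be driven to its prescribed model --- in particular that a vertex region whose central vertex lies deep inside collapses to a cone on that vertex (a general triangulated disc need not edge-contract to a cone over a prescribed boundary cycle, which is precisely why the vertex regions must be taken wide and why some boundary vertices of $\Del_{B''_v,B_v}$ are absorbed rather than retained), and one must see that the models of neighbouring regions agree along their common arcs, which forces all the contractions to stay interior. The secondary, geometric, difficulty is that reaching the right combinatorial model does not by itself give a subdivision: the straightening of the $\gamma_e$ near the vertices and the routing of the edge strips along the $\widetilde{\gamma_e}$, as in Figures~\ref{fig:delneigh}(3) and \ref{fig:deledge}, must be arranged so that the glued embedding genuinely subdivides the fixed copy of $|K|$, and transversality with respect to $\mathcal{B}_0$ together with Lemma~\ref{l:incidenceorder} is what keeps this bookkeeping consistent.
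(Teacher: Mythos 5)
Your decomposition into vertex, edge and triangle regions, and the use of the layered vertex cover, the edge cover, and Lemma~\ref{l:interior-edge}, follows the paper's toolbox, but there are two genuine gaps. First, the claim that $\Del_{B''_v,B_v}$ can be driven to a cone with apex $\hat v$ by iterating Lemma~\ref{l:interior-edge} is not correct as stated. A diagonal edge of this Delaunay sub-disc (an edge joining two boundary vertices that is not itself a boundary edge) survives every contraction of an edge incident to an interior vertex, because such a contraction preserves the boundary cycle and never removes a boundary-to-boundary edge; once the interior vertices are exhausted you are left with a polygon triangulated by its diagonals, not a cone. You notice the obstruction and say that some boundary vertices of $\Del_{B''_v,B_v}$ are ``absorbed rather than retained,'' but absorbing boundary vertices is exactly what interior-incident contractions cannot do, and it also contradicts your own requirement that shared boundary arcs remain untouched. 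The paper resolves this by a \emph{selection} step, not a contraction step: it iteratively cuts $\Del_{B''_v,B_v}$ along diagonal edges, retaining the component whose interior still contains $B_v$ (which is safe because Lemma~\ref{l:vertexedge-containment} guarantees every diagonal misses $B_v$), and only once a diagonal-free sub-disc $D$ is obtained does it apply Lemma~\ref{l:interior-edge} to contract $D$ to a star.

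Second, you also try to contract the edge regions to strips and the triangle regions $\Del_\sigma$ to single triangles. The triangle step fails for the same reason: the boundary cycle of $\Del_\sigma$ will generically have far more than three vertices, and interior-only contractions cannot shrink it to a triangle. More importantly, neither of these extra contractions is needed, since the statement only asks for a \emph{subdivision} of $K$. The paper contracts only the vertex regions to stars, and then re-routes the embedding of each edge $e=\{u,v\}\in K$ to follow a shortest path (in the graph metric) in the $1$-skeleton of the contracted complex inside $B_e$, joining $\partial D_u$ to $\partial D_v$. The rest of the contracted Delaunay complex, lying over the interiors of the edges and $2$-faces of $K$, is simply left intact and reinterpreted as a subdivision of those faces; the auxiliary discs $T_\sigma$ play no role.
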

\begin{proof}
    First, by Lemma~\ref{l:good-emb} the triangulation $K$ admits a piecewise smooth embedding in $S$ that is transversal with respect to some layered vertex cover $\mathcal{B}_0$. In addition, let $\mathcal{B}_1$ be the edge cover guaranteed by Lemma~\ref{l:edge-neigh}.
    Finally, set $\rho >0$ small enough, to be specified below, only depending on $S$, the embedding of $K$, the layered vertex cover $\mathcal{B}_0$ and the edge cover $\mathcal{B}_1$.

    Let $\rho < \rdel(S)$ and $P\subseteq S$ a $\rho$-dense finite subset locally in general position. Then, Theorem~\ref{t:deltriang} implies that the simplicial complex $\Del(P)$ is a triangulation of $S$.

    Call an edge in a triangulated disc \emph{diagonal} if both its vertices belong to the boundary of the disc.
    Now, let $\Del_{B''_v,B_v}$ be the disc guaranteed by Lemma~\ref{l:del-neigh}, see Figure~\ref{fig:delneigh}(1). We proceed to select a subcomplex of $\Del_{B''_v,B_v}$ without diagonal edges while still containing $B_v$ in its interior. For it, let $e$ be a diagonal edge in $\Del_{B''_v,B_v}$. Then $e$ splits this complex into two connected components both of which are 
discs. 
    By taking $2\rho < \min \{d(B_v,\partial B_v'), d(B_v',\partial B_v''), \rscv(S)\}$, Lemma~\ref{l:vertexedge-containment} guarantees that the embedding $\gamma_e$ does not intersect $B_v$. We iterate this procedure with the component having $B_v$ in its interior. The resulting triangulated disc $D$ has no diagonal edges and contains $B_v$ in its interior.
    In $D$ we proceed to iteratively contract edges whose both endpoints are in the interior of $D$, see Lemma~\ref{l:interior-edge}. The resulting complex $D_v$ is a star with boundary $\partial D$ and we identify the apex vertex with $v$, see Figure~\ref{fig:delneigh}(2) Let $K'$ denote the result of applying this sequence of edge contractions to $\Del(P)$ for every vertex $v\in K_0$.

    Next we modify the embedding of $K$ 
    so that its edges are realized on the $1$-skeleton of $K'$.
    For it, let $e=\{u,v\}\in K_1$ and $\Del_{B_e,\widetilde{\gamma_e}} \subseteq \Del(P)$ given by Lemma~\ref{l:del-neigh}\footnote{The definition of $\Del_{B,X}$ applies to every connected subspace $X$ contained in the interior of a disc $B$. Indeed, as $X$ is far enough from the boundary of $B$ then an open neighborhood of $X$ is contained in the  $\Del(P)\cap B$, hence $X$ is contained in a maximal disc component of the  -- this is $\Del_{B,X}$.}, where we have applied the lemma to a small enough neighborhood of $\widetilde{\gamma_e}$. 
    Since $\widetilde{\gamma_e}$ intersects $\partial D_u$ and $\partial D_v$ so does $\Del_{B_e,\widetilde{\gamma_e}}$. Let $\gamma'_e$  be a shortest path in the graph metric contained in the $1$-skeleton of $\Del_{B_e,\widetilde{\gamma_e}}$ between $\partial D_u$ and $\partial D_v$ with endpoints $b_{e,u}$ and $b_{e,v}$ respectively, see Figure~\ref{fig:deledge}. 
We need to make sure that for $e=vu$ and $e''=vu''$ the vertices $b_{e,v}$ and $b_{e'',v}$ in $D_v$ are distinct. Indeed, by taking $4\rho< d(\gamma_e\cap(B''_v\setminus B_v), \gamma_{e''}\cap(B''_v\setminus B_v))$ for every two edges $e,e''$ both containing $v$ as a vertex, we guarantee that $b_{e,v}$ and $b_{e'',v}$ are distinct.
    Finally, replace the embedding $\gamma_e$ by the concatenation $\gamma'_{e,u}\circ \gamma'_e\circ \gamma'_{e,v}$ where $\gamma'_{e,u}$ is the unique 
    edge in $D_u$ from $b_{e,u}$ to $u$ and similarly $\gamma'_{e,v}$ is the unique 
    edge in $D_v$ from $b_{e,v}$ to $v$, see Figures~\ref{fig:delneigh}(3) and~\ref{fig:deledge} . Since the cyclic order at every vertex of $K$ is preserved, the embedded simplicial complex coincides with $K$ (combinatorially) and it is subdivided by $|K'|$.
\end{proof}

Recall that if $K'$ is a subdivision of $K$, given a face $A\in K$ we denote by $K'_A$ the subcomplex of $K'$ subdividing $A$.

\begin{proposition}
  \label{p:minor}
  Let $K,K'$ be two embedded triangulations of a surface $S$ such that $|K'|$ is a subdivision of $|K|$.
  Then, there exists a sequence of edge contractions from $K'$ to $K$.
\end{proposition}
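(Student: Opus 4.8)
The plan is to induct on the number of faces of $K'$ that are not faces of $K$, equivalently on the total number of ``extra'' vertices, edges and triangles of $K'$ that lie in the relative interiors of positive-dimensional faces of $K$. If $K'=K$ there is nothing to do, so assume $K'\neq K$. Since $|K'|$ subdivides $|K|$, there is at least one face $A\in K$ of positive dimension whose subdivision $K'_A$ is nontrivial. The strategy is to find a contractible edge of $K'$ lying in the relative interior of such a face, contract it, and check that the result is still a subdivision of $K$ with strictly fewer extra cells; then apply the induction hypothesis.

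First I would dispose of the one-dimensional faces. If some edge $A=\{x,y\}\in K$ has $K'_A$ a proper subdivision, then $K'_A$ is a path from $x$ to $y$ with at least one interior vertex $w$; contracting any edge of this path incident to $w$ toward $x$ or $y$ is a legal edge contraction of the surface triangulation $K'$ — the contracted edge lies on a $1$-cell of $|K|$, so it cannot be an edge of a missing triangle (a missing triangle would have to bound a disc in $|K'|$ whose interior meets the $1$-dimensional set $|A|$ in more than a segment, impossible since $\Del$-type reasoning is not needed here: the edge lies on $\partial(\text{two distinct }2\text{-faces of }K)$ and the two triangles on its two sides are genuine faces). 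After this contraction $|K'|$ still subdivides $|K|$ and has one fewer vertex in the interior of $|A|$.

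So I may assume every edge of $K$ is unsubdivided in $K'$, and pick a $2$-face $A=\{x,y,z\}\in K$ with $K'_A$ a nontrivial triangulated disc whose boundary is the unsubdivided triangle $\partial A$. Then $K'_A$ has an interior vertex (if it had none, a triangulated disc with boundary a triangle and no interior vertices is the single triangle $A$ itself). Now apply Lemma~\ref{l:interior-edge} to the disc $K'_A$: it has an edge incident to an interior vertex, hence a contractible edge $e$ incident to at least one interior vertex of $K'_A$. Contracting $e$ in $K'$ is legal as a surface move — $e$ is contractible in $K'_A$, and since $e$ lies in the interior of $|A|$, any missing triangle of $K'$ containing $e$ would have its boundary in $K'$ but would have to be one of the triangles of $K'_A$ on the two sides of $e$ inside the disc, contradicting contractibility of $e$ in $K'_A$ (here one uses that $e$ not being on a missing triangle of $K'_A$ is equivalent to it not being on a missing triangle of $K'$, because the star of $e$ in $K'$ is contained in $K'_A$ when $e$ has an interior endpoint). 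The contracted complex $K''$ still carries an embedding making $|K''|$ a subdivision of $|K|$: within $|A|$ we have simply performed a subdivision-preserving collapse, and all other faces of $K$ are untouched. Finally, $K''$ has strictly fewer extra cells than $K'$, so by induction there is a sequence of edge contractions from $K''$ to $K$, and prepending the contraction of $e$ finishes the argument.

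\textbf{Main obstacle.} The delicate point is the bookkeeping that each contraction is \emph{legal for the surface} — i.e.\ that the contracted edge is never on a missing triangle of the \emph{whole} complex $K'$, not merely of the local disc $K'_A$ — and that the contracted complex is still genuinely a subdivision of $K$ in the embedded sense (one must produce the embedding of $K''$, typically by straightening the star of the collapsed vertex inside $|A|$, as was done in the proof of Proposition~\ref{p:refinement}). Getting the base/interior-vertex case of Lemma~\ref{l:interior-edge} to apply cleanly — in particular handling $2$-faces whose subdivision has interior vertices only along subdivided edges once we have already removed edge-subdivisions — is where I expect to spend most of the care; the rest is a straightforward induction.
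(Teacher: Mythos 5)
Your overall strategy — induct on the number of ``extra'' cells, handle edge-subdivisions and triangle-subdivisions separately, and invoke Lemma~\ref{l:interior-edge} for the triangle case — is the same as the paper's. But there is a genuine gap in your treatment of subdivided edges, and the order in which you process the two cases matters.

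You claim that an edge $f=\{a,b\}$ of $K'_e$ (lying on the $1$-cell $|e|$ of $K$) ``cannot be an edge of a missing triangle'' of $K'$ because it is bounded by genuine $2$-faces on its two sides. This is false. The two genuine $2$-faces account for two common neighbors of $a$ and $b$, but a \emph{third} common neighbor $c$ — also on $|e|$, or interior to one of the two adjacent $2$-cells of $K$ — can give a missing triangle $\{a,b,c\}$ with $\{a,c\}$ and $\{b,c\}$ realized as chords inside those cells. Concretely: let $e=\{x,y\}$ be shared by triangles $T_1=\{x,y,z\}$, $T_2=\{x,y,u\}$ of $K$, subdivide $e$ into the path $x\text{--}a\text{--}b\text{--}d\text{--}c\text{--}y$, triangulate $K'_{T_1}$ with diagonals $\{b,c\},\{a,z\},\{b,z\},\{c,z\}$ (faces $\{b,d,c\},\{x,a,z\},\{a,b,z\},\{b,c,z\},\{c,y,z\}$) and $K'_{T_2}$ with diagonals $\{a,c\},\{a,d\},\{a,u\},\{c,u\}$ (faces $\{a,b,d\},\{a,d,c\},\{x,a,u\},\{a,c,u\},\{c,y,u\}$). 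Then $\{a,b\},\{b,c\},\{a,c\}$ are all edges of $K'$ but $\{a,b,c\}\notin K'$, so $\{a,b\}\in K'_e$ lies on a missing triangle. Contracting $\{a,b\}$ is therefore \emph{not} a legal surface move, and your proof breaks at exactly this step.

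The paper avoids this by running the two cases in the opposite order: it first eliminates interior vertices of $2$-cells (your second case), and only then attacks edge-subdivisions. With interior vertices gone, it proves that the third vertex $c$ of any missing triangle containing $f\in K'_e$ must itself lie on $|e|$; the boundary of that missing triangle then bounds a proper sub-disc all of whose vertices are in $K'_e$, and one recurses on that smaller disc (in the spirit of Lemma~\ref{l:interior-edge}) until a contractible edge is found. So the edge case requires both the $2$-face case to have been cleared first and an extra descent argument; neither is present in your write-up, and the missing-triangle obstruction you wave away is real.
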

\begin{proof}
   We proceed by induction on the number of edges of $K'$.
   We split the analysis into two cases, in each case we perform an edge contraction to reduce the number of edges and proceed by induction.
   If $K_0=K'_0$ then the simplicial complexes coincide. Let us assume then that $K'$ has more vertices and consequently there exists a face $A\in K$ such that $K'_A$ has at least one interior vertex. We split the analysis into two cases, either there exists such a triangle $A$, or else $A$ must be an edge.
   
   \noindent
   {\bf Case 1:} If $A$ is a triangle, we let $K'_A$ be the subdivision of a $2$-face with an interior vertex. Then, by Lemma~\ref{l:interior-edge} there exists a contractible edge incident to an interior vertex.
   
   \noindent
   {\bf Case 2:}  Suppose
    that there are no more $2$-faces of $K$ whose subdivision has an interior vertex. Let $K'_e$ be the subdivision of an edge $e\in K$ with a vertex in its relative interior and let $f=\{a,b\}\in K'_e$ an edge. If $f$ is contractible, then we contract it and proceed by induction. Otherwise, $f$ is part of a missing triangle $T=\{a,b,c\}$ in $K'$, hence $c$ must be in $K'_e$ as we excluded Case 1.

   The boundary of $T$ is contained in the disc $D$ formed by the embedded two triangles in $K$ that contain $e$. Thus, the boundary of $T$ bounds a disc $D_T$ realized inside $|D|$ which is a subcomplex of $K'$ with an interior vertex; all vertices of $D_T$ belong to $K'_e$.
   As $K'_e$ has finitely many vertices, proceeding in this manner for an edge $f'$ in $K'_e\cap D_T$ and so on, we finally find a contractible edge in $K'_e$,
similar to the proof of Lemma~\ref{l:interior-edge}.
\end{proof}

\section{Concentration of exterior algebraic shifting for Random Delaunay}~\label{sec:assDelaunay}
Let $\nu$ be a volume measure on $S$ and $U_S$ be the random variable on $S$ uniformly distributed with respect to $\nu$, i.e., for a $\nu$-measurable subset $V\subseteq S$ we have that $\P(U_S\in V)=\nu(V)/\nu(S)$. For $n\in \N$ let $P_n$ denote the random variable of picking $n$ (unlabelled) points from $S$ independently uniformly at random according to $\nu$. Equivalently, we can consider the stochastic process $(P_n)_{n\in \N}$ where at each step we pick a new point in $S$ uniformly at random and independent from the previous choices. We are interested in sampling points that are locally in general position (g.p. for short), i.e., (1) no point is on the minimal geodesic between other two points at distance less
than $\rdel(S)$, and (2) no four points are simultaneously on the boundary of a ball of radius less than $\rdel(S)$.
\begin{lemma}
  \label{l:genericdense}
  Let $S$ be a closed connected Riemannian surface, $\rho>0$ fixed, and $P_n$ as above. Then,  
   $P_n$ is almost-surely locally in g.p for every $n\in \mathbb N$, and a.a.s. $\rho$--dense.
\end{lemma}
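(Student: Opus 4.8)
The plan is to prove the two assertions independently; fix $\rho>0$ throughout.

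\emph{Local general position (almost surely, simultaneously for all $n$).} I would show that each of the two forbidden patterns cuts out a $\nu^{\otimes k}$-null subset of $S^k$ --- a $\nu$-null set being the same as a Lebesgue-null set in charts, since $\nu$ has a smooth positive density --- and then union-bound over the finitely many $k$-tuples of points of $P_n$ and intersect over the countable index set $\N$. To get each null-set claim I would realize the bad locus as the image of a smooth map out of a manifold of dimension strictly below that of the target, which forces measure zero. For the first pattern the relevant set is $E_3=\{(p,q,r)\in S^3\colon 0<d(p,q)<\rdel(S),\ r\ \text{on}\ \gamma_{p,q}\}$, where $\gamma_{p,q}$ is the minimal geodesic joining $p$ and $q$, unique because $\rdel(S)\le\rinj(S)$; since minimal geodesics depend smoothly on their endpoints in this range, $E_3$ is the image of the smooth map $(p,q,t)\mapsto(p,q,\gamma_{p,q}(t))$ defined on $\{(p,q,t)\colon p,q\in S,\ d(p,q)<\rdel(S),\ 0\le t\le d(p,q)\}$, a set of dimension $2+2+1=5<6$, hence $\nu^{\otimes 3}$-null. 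For the second pattern, if $p,q,r,s$ lie on the boundary of a ball $B(x,\epsilon)$ with $\epsilon<\rdel(S)\le\rinj(S)$, then each of the four points equals $\exp_x(\epsilon v)$ for a unique unit vector $v\in T_xS$; hence the bad set $E_4\subseteq S^4$ is contained in the image of the smooth map $(x,\epsilon,v_1,\dots,v_4)\mapsto(\exp_x(\epsilon v_1),\dots,\exp_x(\epsilon v_4))$, where $x\in S$, $0<\epsilon<\rdel(S)$, and $v_1,\dots,v_4$ are unit vectors in $T_xS$; its domain has dimension $2+1+4=7<8$, so $E_4$ is $\nu^{\otimes 4}$-null. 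Consequently, for each fixed $n$ the event ``$P_n$ is not locally in general position'' is contained in the finite union, over triples resp.\ quadruples of points of $P_n$, of the preimages of $E_3$ resp.\ $E_4$, and so has probability $0$; since $\N$ is countable, almost surely $P_n$ is locally in general position for every $n$.

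\emph{$\rho$-density (asymptotically almost surely).} By compactness of $S$ I would fix $x_1,\dots,x_N\in S$ with $S=\bigcup_{i=1}^N B(x_i,\rho/2)$. The observation is that if every $B(x_i,\rho/2)$ contains a point of $P_n$, then $P_n$ is $\rho$-dense: any $x\in S$ lies in some $B(x_i,\rho/2)$, whence $B(x_i,\rho/2)\subseteq B(x,\rho)$ and the latter meets $P_n$, and every ball of radius $\ge\rho$ contains $B(x,\rho)$. Setting $c:=\min_{1\le i\le N}\nu(B(x_i,\rho/2))/\nu(S)>0$ --- positive as a finite minimum of measures of nonempty open sets --- and using that the $n$ points of $P_n$ are i.i.d.\ with law $\nu/\nu(S)$, we get $\P(B(x_i,\rho/2)\cap P_n=\emptyset)=(1-\nu(B(x_i,\rho/2))/\nu(S))^n\le(1-c)^n$; a union bound then gives $\P(P_n\ \text{not}\ \rho\text{-dense})\le N(1-c)^n\to 0$ as $n\to\infty$.

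\emph{Expected obstacle.} Neither part is genuinely difficult: the density statement is a routine covering-plus-first-moment estimate, and the only point requiring care in the general-position part is to invoke $\rdel(S)\le\rinj(S)$ so that the relevant minimal geodesics and geodesic spheres are unique and smoothly parametrized, which is what makes the degenerate configurations sweep out a set of dimension strictly smaller than $\dim S^k$.
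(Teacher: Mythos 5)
Your proof is correct. The $\rho$-density part is essentially identical to the paper's: cover $S$ by finitely many balls of radius $\rho/2$, note that having a point of $P_n$ in each such ball forces $\rho$-density, and conclude by a union bound with the first-moment estimate $(1-c)^n$. For local general position you take a genuinely different route. The paper argues inductively: conditioned on $P_{n-1}$ being in general position, the fresh i.i.d.\ point $p_n$ must avoid finitely many ``bad'' subsets of $S$ (geodesics between pairs and circumscribed circles of triples of the old points), each a curve of $\nu$-measure zero, hence $\P(E_n \mid E_{n-1})=1$. You instead bound the bad locus directly in the product space $S^3$ (resp.\ $S^4$), exhibiting $E_3$ and $E_4$ as images of smooth maps from manifolds of dimension $5<6$ (resp.\ $7<8$), which forces them to be null by the standard fact that a $C^1$ map out of a lower-dimensional domain has null image; you then union-bound over the $O(n^4)$ ordered tuples and intersect over the countable index set $\N$. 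Both arguments hinge on the same geometric input — $\rdel(S)\le\rinj(S)$ so that minimal geodesics and geodesic spheres are unique and smoothly parametrized in the relevant range — but the paper's conditioning argument is slightly more elementary (one only needs ``curves in a surface are $\nu$-null'' rather than the dimension-counting/Sard-type lemma in product spaces), while yours avoids conditioning and makes the ``thin bad locus'' intuition quantitative. Either is a complete proof.
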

\begin{proof}
  For the first part we proceed by induction on $n$. Denote by $E_n$ the event that $P_n$ is locally in g.p. Observe that $\P(E_n)=\P(E_n|E_{n-1})\P(E_{n-1})$ and that $\P(E_{n-1})=1$ by induction. Conditioned on $E_{n-1}$, the probability of $E_n$ is given by the probability of picking a point that is not in the union of the sets determined by the complements of the conditions $(1)$ and $(2)$ in the definition of g.p.above. Since these complements have $\nu$-measure $0$ in $S$ and there are finitely many of them, the conclusion follows.
  
  For the second part let $\{B(x,\rho/2)\colon x\in I\}$ be a finite open cover of $S$, which exists since $S$ is compact. Then, it is enough to guarantee that $P_n\cap B(x,\rho/2)\neq \emptyset$ for every $x\in I$. Indeed, if this is true, then for every $y\in S$ there exists $x\in I$ such that $y\in B(x,\rho/2)$ and $p\in P\cap B(x,\rho/2)$. Then, $d(y,p)\leq d(y,x)+d(x,p)<\rho$ as claimed. It follows that
  $$\P(P_n~\text{is $\rho$--dense}) \geq \P(\forall x\in I, P_n\cap B(x,\rho/2)\neq \emptyset) \geq 1 - \sum_{x\in I}\P(P_n\cap B(x,\rho/2)=\emptyset) 
                         \geq 1-|I|(1-\mu)^n,
  $$
  where 
  $ \mu = \min_{x\in I} \nu(B(x,\rho/2))/\nu(S)>0$. 
  The conclusion now follows by taking $n\to \infty$.
\end{proof}

\begin{proof}[Proof of Theorem~\ref{t:aasDelauney}.]
  Let $K$ be a triangulation of $S$ given by Corollary~\ref{c:hlextriang} and set $\rho > 0$ as required by Theorem~\ref{t:delaunayref} when applied to $K$ and $S$.
  By Lemma~\ref{l:genericdense} $P_n$ is a.a.s. $\rho$-dense and is locally in general position. Then, Theorem~\ref{t:delaunayref} guarantees that $\Del(P_n)$ admits a sequence of edge contractions reaching $K$. Since $\exshift{K}$ is a homology lex-segment then Corollary~\ref{c:splithlex} implies that $\exshift{\Del(P_n)}$ is a homology lex-segment as well.
\end{proof}

\section{Concentration of exterior algebraic shifting for the uniform model}~\label{sec:ass1dim}
\noindent
{\bf One dimensional complexes.}
Let $X$ be a compact one dimensional topological space.
It admits natural triangulations each given by some graph $G$. Let $\sd(G)$ denote the barycentric subdivision of $G$, namely the one obtained by introducing a new vertex at the interior of each edge of $G$, subdividing that edge into two edges.
The following proposition holds for exterior algebraic shifting over any field:
\begin{proposition}
    \label{p:graphsdlex}
    Let $G$ be a graph, then $\exshift{\sd G}$ is a homology lex-segment.
\end{proposition}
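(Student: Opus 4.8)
\textbf{Proof proposal for Proposition~\ref{p:graphsdlex}.}

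The plan is to compute $\exshift{\sd G}$ explicitly and check that it matches the formula for $\Delta(|G|, n+c-1)$ given in the introduction; since that formula is by definition the unique homology lex-segment with the right homology, this suffices. First I would reduce to the connected case: if $G$ has connected components $G_1,\dots,G_c$ with first Betti numbers $b_1,\dots,b_c$, then $\sd G$ is the disjoint union of the $\sd G_i$, and $|G|$ has Betti numbers $\beta_0 = c$, $\beta_1 = b := b_1 + \cdots + b_c$. Using Theorem~\ref{t:simplex-union} (or rather the special case of a disjoint union, which corresponds to gluing along the empty simplex $B = \emptyset$) one can assemble $\exshift{\sd G}$ from the $\exshift{\sd G_i}$ just as in the remark after Example~\ref{ex:graph}; so it is enough to prove the statement when $G$ is connected, in which case $c = 1$, $|V(\sd G)| = n$ is the total vertex count, $|E(\sd G)| = n + b - 1$, and the target is $\Delta(|G|, n) = [n] \cup \{A \in \binom{[n]}{2} : A \le \{2, b+2\}\}$.

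For connected $G$, here is what I would do. Algebraic shifting preserves $f$-vector and Betti numbers, so $\exshift{\sd G}$ is a shifted graph on $n$ vertices with $n-1+b$ edges and first Betti number $b$. A shifted graph on $[n]$ is determined by its edge set being an initial segment in some partial order; the key structural fact is that a shifted graph is a \emph{forest} (equivalently $\beta_1 = 0$) iff it contains no edge $\{i,j\}$ with $i \ge 2$, i.e. iff all edges are of the form $\{1, j\}$; more generally a shifted graph with $b$ independent cycles must contain all edges $\{1,j\}$ for $2 \le j \le n$ (that is a spanning star, using $n-1$ edges) plus exactly $b$ further edges, and being shifted forces these to be the lex-first $b$ edges not of the form $\{1, \cdot\}$, namely $\{2,3\}, \{2,4\}, \dots, \{2, b+2\}$. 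The last of these is $\{2, b+2\}$, which is exactly the $(n-1+b)$-th edge in lex order on $\binom{[n]}{2}$, matching the displayed formula. So the whole content is: \emph{$\exshift{\sd G}$ contains every edge $\{1,j\}$}, equivalently the shifted graph $\exshift{\sd G}$ has no isolated vertex other than possibly none — more precisely every vertex is connected to vertex $1$. Since $\exshift{\sd G}$ is connected (shifting preserves $\beta_0 = 1$) and shifted, and a connected shifted graph on $[n]$ automatically contains all edges $\{1,j\}$, this is immediate: in a shifted graph, if $\{i,j\} \in \exshift{\sd G}$ with $i<j$ then $\{1,j\}\in \exshift{\sd G}$, and connectivity guarantees each $j$ lies in some edge.

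The one genuinely non-formal input is that the barycentric subdivision, as opposed to an arbitrary subdivision, forces the Betti numbers to be realized in the "cleanest" possible shifted way — but in fact once we know shifting preserves $\beta_0$ and $\beta_1$ and the number of vertices and edges, the shifted graph is \emph{uniquely} pinned down by the elementary combinatorics of shifted graphs described above, with no further hypothesis on $\sd G$ needed. This is precisely why Example~\ref{ex:graph} is not a counterexample here: the point of using $\sd G$ (rather than a general refinement) only matters for later arguments; for this proposition any connected graph refinement of $G$ on $n$ vertices would do. I expect the main obstacle to be purely expository: carefully stating and citing the elementary classification of shifted graphs by $(n, \#\text{edges}, \beta_1)$, and handling the disjoint-union reduction via Theorem~\ref{t:simplex-union} with $B = \emptyset$ cleanly — but neither step involves any real difficulty, and the proof should be short.
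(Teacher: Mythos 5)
Your reduction to connected $G$ via Theorem~\ref{t:simplex-union} with $B=\emptyset$, and your observation that a connected shifted graph must contain every edge $\{1,j\}$, are both fine. The fatal step is the next one: you claim that shiftedness forces the remaining $b$ edges to be the lex-first ones $\{2,3\},\dots,\{2,b+2\}$, i.e.\ that a shifted graph is ``uniquely pinned down by $(n,\#\text{edges},\beta_1)$.'' That is false, and the paper says so explicitly in the sentence ``it is not determined by them in general, not even in dimension~1, see Example~\ref{ex:graph}.'' A shifted graph's edge set is a down-set in the dominance order, not a lex initial segment. For a concrete counterexample to your step, take $n=5$, $b=3$: the graph with edges $\{1,j\}$ for $j=2,\dots,5$ together with $\{2,3\},\{2,4\},\{3,4\}$ is shifted, connected, has $7$ edges and $\beta_1=3$, yet contains $\{3,4\}$, so it is \emph{not} the homology lex-segment. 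Example~\ref{ex:graph} realizes exactly this: the refinement $R$ of the graph containing $K_4$ satisfies $\{3,4\}\in\exshift{R}$ even though $R$ has the same face and Betti numbers as any other $n$-vertex refinement. You cite this example and assert it ``is not a counterexample here,'' but it is precisely a counterexample to your uniqueness claim, which is why your parenthetical ``any connected graph refinement of $G$ on $n$ vertices would do'' is also wrong.

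What your argument is missing is a genuine input about $\sd G$ that rules out $\{3,4\}\in\exshift{\sd G}$. The paper supplies exactly this: if $\{3,4\}$ were in the shifted complex, then by a theorem of Kalai on hyperconnectivity there would have to be a $3$-edge-connected $2$-hypercycle subgraph $H$ of $\sd G$; but every edge of $\sd G$ is incident to a barycentric vertex, and such a vertex has degree at most $2$ in $H$, a contradiction. Once $\{3,4\}\notin\exshift{\sd G}$ is established, shiftedness does rule out every edge $\{i,j\}$ with $i\ge 3$, and only then does your counting argument (spanning star plus $b$ edges of the form $\{2,j\}$, forced to be $\{2,3\},\dots,\{2,b+2\}$) close the proof. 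So the combinatorial bookkeeping in your proposal is correct once the key exclusion is in place, but the exclusion itself is the entire content of the proposition and you have omitted it.
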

\begin{proof}
    First, we claim that $\tail(\exshift{\sd G},\{3,4\}) = \emptyset$. Otherwise there exists a connected subgraph $H$ of $\sd G$ that is a $2$-hypercycle and consequently is $3$-edge connected, see \cite[Theorem 5.4]{kalai85-hyperconnectivity}. However, this is not possible for if $e=\{u,v\}\in H$ and we assume without loss of generality that $u \in V(\sd G)\setminus V(G)$ then the degree of $u$ in $H$ is at most $2$.
    Since algebraic shifting outputs a shifted simplicial complex with identical Betti numbers we conclude that $\exshift{\sd G} = [n]\bigcup \{A\in \binom{[n-\beta_0]}{2}\colon A \leq \{2,\beta_1+2\}\}$ as wanted.
    \end{proof}

\begin{proof}[Proof of Theorem~\ref{t:aasUniform1-dim}]
    First, by Propositions \ref{p:graphsdlex} and \ref{p:splittail} with $m=2$, the event that $\exshift{U_n(|G|)}=\Delta(|G|,n)$ contains the event $E$  that every edge of $G$ is subdivided. 
    In addition, since the order in which we subdivide an edge is not important the model $U_n(|G|)$ can be viewed as a balls and bins model where the bins represent the edges of $G$ and the balls represent the new vertices. Therefore, the probability that $E$ does not occur is at most $|E|(1-1/|E|)^n$ which tends to $0$ as $n\to\infty.$
\end{proof}

\noindent
{\bf Two dimensional complexes.}
We show here that unlike the one-dimensional case, the uniform triangulation of the $2$-disc and of the disjoint union of two $2$-discs do not have a concentrated exterior algebraic shifting.
\begin{proposition}
    \label{p:discshift}
    Let $K$ be a triangulation of a disc with $n$ interior vertices and $m$ boundary vertices, then $$\exshift{K} = [n+m] \bigcup \left \{A\in \binom{[n+m]}{2} \colon A \leq \{3,3+n\} \right \} \cup \left \{A \in \binom{[n+m]}{3} \colon A \leq \{1,3,3+n\} \right \} .$$
\end{proposition}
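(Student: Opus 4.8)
The plan is to compute $\exshift{K}$ directly from the combinatorial data $f$-vector plus Betti numbers, plus one structural fact: that $K$, being a triangulated disc, is collapsible (equivalently, that $\beta_0=1$ and $\beta_1=\beta_2=0$), and that algebraic shifting preserves all $f_i$'s and all $\beta_i$'s. Recall that for a shifted $2$-complex $\Delta$ on $[N]$ one can read off the face numbers and Betti numbers from the initial segments appearing in each dimension: the vertices are $[N]$; the edges form an initial segment of $\binom{[N]}{2}$ in the lex order whose length is $f_1(K)$; and the triangles form an initial segment of $\binom{[N]}{3}$ of length $f_2(K)$. The homology is then encoded by which edges/triangles are ``critical'' in the sense of the Morse-type matching on a shifted complex: since $\beta_1(K)=\beta_2(K)=0$, the shifted complex $\exshift{K}$ has no edge that fails to be matched up with a triangle, and no triangle that fails to be matched down. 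This forces the edge segment and the triangle segment to be ``aligned'' in a precise way, which is exactly what the claimed formula says.

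\textbf{Key steps, in order.}
\emph{Step 1.} Record the face numbers of $K$. A triangulated disc with $n$ interior and $m$ boundary vertices has, by Euler's formula together with the boundary count ($\partial K$ is a cycle on $m$ vertices), $f_0 = n+m$, $f_1 = 3n + 2m - 3$, and $f_2 = 2n + m - 2$. \emph{Step 2.} Since $\exshift{K}$ is shifted with the same $f$-vector, its edge set is the lex-initial segment of $\binom{[n+m]}{2}$ of size $3n+2m-3$; a direct count of the lex order shows this segment is exactly $\{A\in\binom{[n+m]}{2} : A\le\{3,n+3\}\}$ (the pairs containing $1$: there are $n+m-1$ of them; the pairs $\{2,j\}$: another $n+m-2$; then $\{3,4\},\dots,\{3,n+3\}$: another $n$; total $(n+m-1)+(n+m-2)+n = 3n+2m-3$). \emph{Step 3.} Likewise the triangle set is the lex-initial segment of $\binom{[n+m]}{3}$ of size $2n+m-2$; counting, the triples containing $1$ whose remaining pair lies in the lex-initial edge segment up to $\{3,n+3\}$ number exactly $2n+m-2$, i.e. the segment $\{A\in\binom{[n+m]}{3}: A\le\{1,3,n+3\}\}$ (the triples $\{1,2,j\}$: $n+m-2$; the triples $\{1,3,j\}$ with $4\le j\le n+3$: $n$; total $2n+m-2$). \emph{Step 4.} Verify consistency with the Betti numbers: $\exshift K$ must have $\beta_0=1,\beta_1=\beta_2=0$ (same as $K$), and one checks that the shifted complex with the above edge and triangle segments indeed has these Betti numbers — every edge other than $\{1,2\}$ is the top edge of exactly one triangle in the segment (matched down), $\{1,2\}$ carries $\beta_0$, every triangle has its ``support'' edge matched, so $\beta_1=\beta_2=0$. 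This is what pins down that the triangle segment ends precisely at $\{1,3,n+3\}$ and not earlier or later.

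\textbf{Alternative for Steps 2--4.} Rather than arguing numerically that the segments must align, one can invoke the structure results already in the paper: $K$ is a triangulated disc, hence by Lemma~\ref{l:interior-edge} (iterating interior edge contractions) it edge-contracts, through triangulated discs, down to a single triangle when $n\geq 1$, or to a triangulated polygon with no interior vertices when $n=0$; then Proposition~\ref{p:splittail} and Proposition~\ref{p:splitvolume} (applied in the disc setting, or rather their obvious disc analogues — if one prefers to stay strictly within the stated results, one computes the base cases directly and uses that shifting is monotone under vertex splits) control the tails $\tail(\exshift K,\{3,4\})$, $\tail(\exshift K,\{1,3,\cdot\})$, etc. The cleanest route, though, is purely numerical: $\exshift K$ is determined by its being shifted, having the $f$-vector from Step~1, and having $\beta_1=\beta_2=0$ — three constraints that a short calculation shows are satisfied by exactly the claimed complex.

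\textbf{Main obstacle.} The only real content is Step~1 (getting the $f$-vector of an arbitrary triangulated disc right, including the boundary-edge bookkeeping) together with confirming in Step~4 that ``shifted $+$ correct $f$-vector $+$ correct Betti numbers'' has a \emph{unique} solution and that it is the displayed one — i.e. that the triangle initial segment is forced to terminate exactly at $\{1,3,n+3\}$. This uniqueness is where one must be slightly careful: it uses that in a shifted $2$-complex with $\beta_2=0$ every triangle is matched with an edge and with $\beta_1=0$ every ``unmatched-from-above'' edge is matched with a triangle, so the triangle segment length and the edge segment length are rigidly linked; the numerics of Steps 2--3 then do the rest. Everything else is routine counting in the lex order.
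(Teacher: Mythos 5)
Your numerical route has a genuine gap. In Step 2 you write ``Since $\exshift{K}$ is shifted with the same $f$-vector, its edge set is the lex-initial segment,'' and in Step 4 you assert that ``shifted $+$ correct $f$-vector $+$ correct Betti numbers'' has a unique solution. Neither claim is true: shiftedness is strictly weaker than being a lex-initial segment, and the triple (shifted, $f$-vector, reduced Betti numbers) does not in general pin down the complex. Concretely, take $n=3$, $m=3$, so $f=(6,12,7)$ and all reduced Betti numbers vanish. As you correctly note, trivial reduced homology of a shifted $2$-complex forces it to be a cone with apex $1$, so the complex is determined by its link $L=\lk(1,\cdot)$, which is a shifted graph on $\{2,\dots,6\}$ with $7$ edges. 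But there are two such graphs: $L_1=\{2,3\},\{2,4\},\{2,5\},\{2,6\},\{3,4\},\{3,5\},\{3,6\}$ (giving the claimed answer) and $L_2=\{2,3\},\{2,4\},\{2,5\},\{2,6\},\{3,4\},\{3,5\},\{4,5\}$. Both are shifted, both have $7$ edges, and $1*L_1$, $1*L_2$ are distinct shifted $2$-complexes with the same $f$-vector $(6,12,7)$ and the same (trivial) Betti numbers. So your constraints cannot select the correct one; in particular nothing you wrote rules out $\{4,5\}\in\exshift K$ or, equivalently, forces $\{3,n+3\}\in\exshift K$.

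The paper closes exactly this gap with three structural inputs that go beyond the $f$-vector and homology: (i) $\{1,4,5\}\notin\exshift K$ because $K$ sits inside a triangulated $2$-sphere and the shifting of any sphere triangulation omits $\{1,4,5\}$; (ii) $\{2,n+m\}\in\exshift K$ because the $1$-skeleton of a disc triangulation is $2$-hyperconnected (Kalai's rigidity/hyperconnectivity); (iii) $K$ is shellable, hence Cohen--Macaulay, hence its algebraic shifting is \emph{pure}, which combined with (i) gives $\{4,5\}\notin\exshift K$ and with (ii) gives $\{1,2,n+m\}\in\exshift K$. Once you know the edge set is supported on rows $1,2,3$ (that is, $\{4,5\}\notin\exshift K$) and that all $2$-faces contain $1$, shiftedness plus the $f$-vector does force the displayed initial segments. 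Your Step 1 ($f$-vector bookkeeping) is correct and matches the paper's implicit use of it, and your observation that trivial Betti numbers force a cone over $1$ is a useful reformulation; but the selection of which shifted cone occurs requires (i)--(iii), not just counting. Your ``alternative'' via interior-edge contractions and a disc analogue of Propositions~\ref{p:splittail} and~\ref{p:splitvolume} is closer in spirit to Section~\ref{sec:Universality} of the paper, but those propositions are stated for closed surface triangulations and you would need to prove the disc versions before invoking them, so as written it also does not constitute a proof.
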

\begin{proof}
    Since $K$ is part of a triangulation of a sphere, then $\{1,4,5\} \notin \exshift{K}$. Moreover, since the homology of $K$ is trivial, then there is no $2$-face whose smallest vertex is strictly larger than $1$. Since the $1$-skeleton of $K$ is $2$-hyperconnected, see~\cite{kalai85-hyperconnectivity}, then $\{2,n+m\} \in \exshift{K}$. Combining this with the fact that $K$ is Cohen-Macaulay, even shellable, its algebraic shifting is pure implying that $\{1,2,m+n\}\in \exshift{K}$ and $\{4,5\}\notin \exshift{K}$ . Thus, the remaining edges must be of the form $\{3,k\}$, and the remaining triangles of the form $\{1,3,k\}$,    
    forming initial lex-segments of edges and of triangles by the shiftedness of $\exshift{K}$. The claim now follows from the fact that algebraic shifting preserves the $f$-vector.
\end{proof}

\begin{corollary}
    \label{c:discsconcentration}
    Let $X$ be $2$-dimensional disc, then $\exshift{U_n(X)}$ is not concentrated. 
\end{corollary}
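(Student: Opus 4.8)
Corollary~\ref{c:discsconcentration}: for $X$ a $2$-disc, $\exshift{U_n(X)}$ is not concentrated.

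The plan is to exhibit two $n$-vertex triangulations of the disc, each occurring with probability bounded away from $0$ in the uniform model, whose exterior algebraic shiftings differ — by Proposition~\ref{p:discshift} the shifting is completely determined by the pair $(n_{\mathrm{int}}, n_{\mathrm{bd}})$ where $n_{\mathrm{int}}$ is the number of interior vertices and $n_{\mathrm{bd}} = n - n_{\mathrm{int}}$ the number of boundary vertices, and this invariant ranges over many values. So it suffices to show that, under the uniform distribution on $n$-vertex disc triangulations, the random variable $n_{\mathrm{int}}$ is not concentrated on a single value; in fact it takes at least two distinct values each with probability $\Omega(1)$.

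First I would fix the smallest disc triangulation $D_0$ (a single triangle: $3$ boundary vertices, $0$ interior) and run the analogue of the one-dimensional argument: any $n$-vertex triangulation of $X$ can be built up from $D_0$ by a sequence of elementary moves (inserting a vertex into a triangle, i.e.\ a $1$-to-$3$ subdivision, or inserting a vertex on a boundary edge), so $U_n(X)$ can be realized as the outcome of a Markov-type stochastic growth process. Within this process it is easy to arrange a positive-probability event $E_{\mathrm{int}}$ in which \emph{every} vertex after the first three is inserted into a triangle's interior — this yields $n_{\mathrm{int}} = n-3$ — and, disjointly, a positive-probability event $E_{\mathrm{bd}}$ in which every new vertex is placed on the boundary, giving $n_{\mathrm{int}} = 0$. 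Concretely, at each step the number of available ``slots'' of each type is polynomially bounded, so the probability of choosing a slot of the desired type is at least $1/\mathrm{poly}(n)$ at each of the $n-3$ steps; to get a genuine $\Omega(1)$ lower bound one instead argues as in the proof of Theorem~\ref{t:aasUniform1-dim}, reducing to a balls-and-bins / P\'olya-urn computation, or simply notes that it suffices to have two values each of probability at least, say, $n^{-C}$, since non-concentration only requires that no single value carries probability $1-o(1)$ — and the total probability must sum to $1$ over a growing number of values of $n_{\mathrm{int}}$, so \emph{some} pair of values each gets at least $\tfrac{1}{2n}$.

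Then I would finish: by Proposition~\ref{p:discshift}, on $E_{\mathrm{int}}$ we have $\exshift{U_n(X)} \supseteq \{3, n\}$ whereas on $E_{\mathrm{bd}}$ we have $\{3,4\}\notin \exshift{U_n(X)}$ (indeed the $2$-skeleton part is empty and the edge set is just $\{1,2\}$ together with $\binom{[?]}{1}$-type initial segment); in particular the two shiftings are distinct complexes, each attained with probability bounded away from $0$, so $\exshift{U_n(X)}$ is not concentrated. The main obstacle is the second step — correctly formalizing the uniform-random-triangulation model as a growth process and lower-bounding the probabilities of $E_{\mathrm{int}}$ and $E_{\mathrm{bd}}$; but since ``not concentrated'' only needs two values of $n_{\mathrm{int}}$ with non-vanishing probability (not necessarily $\Omega(1)$), this reduces to the elementary pigeonhole observation that the distribution of $n_{\mathrm{int}}\in\{0,1,\dots,n-3\}$ cannot be a point mass for all large $n$ once one checks that at least two values are attainable with positive probability at each $n$, which the growth process makes transparent.
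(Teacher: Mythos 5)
Your reduction to the non-concentration of the number of boundary vertices (equivalently interior vertices) is the right first step and is exactly what the paper does, via Proposition~\ref{p:discshift}. But the rest of your argument has genuine gaps.

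First, the ``growth process'' you sketch does not realize the uniform distribution on $n$-vertex disc triangulations: different triangulations have wildly different numbers of build histories, so inserting vertices one at a time (whether by a fixed rule or uniformly over available slots) induces a biased distribution, not the uniform one. Unlike the $1$-dimensional case in Theorem~\ref{t:aasUniform1-dim}, where subdividing a graph really is a balls-and-bins process, there is no easy Markov reformulation here. Second, even taking the growth process at face value, your events $E_{\mathrm{int}}$ and $E_{\mathrm{bd}}$ require a specific choice at each of $n-3$ steps, each with probability $1/\mathrm{poly}(k)$ at step $k$; the product is super-exponentially small, nowhere near $\Omega(1)$ or even $1/\mathrm{poly}(n)$. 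Third, and most importantly, the pigeonhole step at the end is simply false. From ``the probabilities over $n-2$ possible values of $n_{\mathrm{int}}$ sum to $1$'' it does \emph{not} follow that two distinct values each receive probability at least $\frac{1}{2n}$: the distribution could put mass $1-\frac{1}{n}$ on one value and split the remaining $\frac{1}{n}$ among the others. In fact, exactly this kind of near-point-mass distribution is the failure mode you must rule out, since ``not concentrated'' means there is no deterministic sequence of shifted complexes that $\exshift{U_n(X)}$ equals a.a.s., so one must genuinely show two distinct values of $m$ each occur with probability bounded away from $0$ (or at minimum show no single value has probability $1-o(1)$). Observing that the distribution is not a literal point mass is far weaker and does not suffice.

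The paper sidesteps all of this by invoking classical asymptotic enumeration of disc (i.e.~planar) triangulations: Tutte and later Goulden--Jackson give, for each fixed boundary length $m\geq 3$, an explicit count of $n$-vertex triangulations of an $m$-gon, and the ratios converge, so $\P(U_n(X)\text{ has }m\text{ boundary vertices})\to\varepsilon_m>0$. That is the analytic input your sketch is missing; without it or a comparable counting estimate, the argument does not close.
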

\begin{proof}
    By well-known estimates ~\cite{Tutte1962,goulden2004combinatorial} for the number of $n$-vertex triangulations of the disc with $m$ boundary vertices, it can be derived that for every fixed $m\ge 3$ there exists $\varepsilon_m>0$ such that
    \[
    \P(U_n(X) \text{ has $m$ boundary vertices}) =\varepsilon_m +o(1)\,\]
    as $n\to \infty$. 
    In consequence, the probability that the $2$-faces satisfy $\exshift{U_n(X)} = \left \{A \in \binom{[n+m]}{3} \colon A \leq \{1,3,3+n\} \right \}$, for every fixed $m$, is bounded away from $0$ as $n$ grows.
\end{proof}

\begin{lemma}
    \label{c:twodiscshift}
    Let $K_1,K_2$ be the triangulations of two disjoint discs with $n_1,n_2$ interior vertices and $m_1,m_2$ boundary vertices respectively. Set $n=n_1+n_2$ and $m=m_1+m_2$, then
    $$\exshift{K_1\sqcup K_2} = [n+m] \cup \{\{i,j\}\colon 1\leq i\leq 3,~i \leq t\leq n+m-i\}\cup \left \{A\in \binom{[n+m-2]}{3}\colon A\leq \{1,3,n-3\} \right \}.$$
\end{lemma}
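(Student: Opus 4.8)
The plan is to invoke Theorem~\ref{t:simplex-union} in the degenerate case $B=\emptyset$. Indeed $K_1\sqcup K_2=K_1\cup K_2$ with $K_1,K_2$ vertex-disjoint, hence $K_1\cap K_2=\langle\emptyset\rangle=\{\emptyset\}$; moreover $\exshift{\langle\emptyset\rangle}=\{\emptyset\}$ contains no singleton, so $D_{\langle\emptyset\rangle}(S)=0$ for all $S$. Thus Theorem~\ref{t:simplex-union} reduces to the statement that $\emptyset$ and all $n+m$ singletons are faces, while for $T=\{t_1<\cdots<t_r\}$ with $r\ge2$,
\[
T\in\exshift{K_1\sqcup K_2}\iff t_r-t_{r-1}\le D_{K_1}(T\setminus\{t_r\})+D_{K_2}(T\setminus\{t_r\}).
\]
The only conceptual point is the meaning of $\exshift{K_i}$ when $K_i$ uses only $n_i+m_i$ of the ground-set labels: algebraic shifting always carries a complex onto the smallest available labels, so $\exshift{K_i}$ is precisely the complex described in Proposition~\ref{p:discshift} on the label set $[n_i+m_i]$, regardless of how $K_i$ is labeled inside $[n+m]$. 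I would record this remark first, and then everything else is bookkeeping.

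Write $N_i=n_i+m_i$ and $N=n+m$. Reading off Proposition~\ref{p:discshift}, the only sets $S$ with $D_{K_i}(S)\ne0$ are
\[
S=\emptyset,\ \{1\},\ \{2\},\ \{3\},\ \{1,2\},\ \{1,3\},\qquad\text{with}\qquad D_{K_i}(S)=N_i,\ N_i-1,\ N_i-2,\ n_i,\ N_i-2,\ n_i,
\]
and $D_{K_i}(S)=0$ whenever $|S|=3$ (as $K_i$ is $2$-dimensional). Hence $D_{K_1}(S)+D_{K_2}(S)$ equals $N$, $N-2$, $N-4$, $n$, $N-4$, $n$ on those six sets and $0$ elsewhere. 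Plugging this into the displayed inequality, the nonempty faces of $\exshift{K_1\sqcup K_2}$ are exactly the $n+m$ vertices; the edges $\{1,t\}$ for $2\le t\le N-1$, $\{2,t\}$ for $3\le t\le N-2$, and $\{3,t\}$ for $4\le t\le n+3$; and the triangles $\{1,2,t\}$ for $3\le t\le N-2$ and $\{1,3,t\}$ for $4\le t\le n+3$ (there are no faces of dimension $\ge3$ since both $D_{K_i}$ vanish on triples). This is the complex in the statement.

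Finally I would run the (automatic) consistency check that the resulting $f$-vector is $(n+m,\,3n+2m-6,\,2n+m-4)=f(K_1)+f(K_2)$ and that $\beta_0=2$, $\beta_1=\beta_2=0$, as for $K_1\sqcup K_2$. I do not expect any real difficulty beyond the conceptual step already highlighted, namely being at ease with Theorem~\ref{t:simplex-union} at the empty simplex and with the fact that the labels $K_2$ carries inside $[n+m]$ are immaterial because $\exshift{K_2}$ re-indexes it onto $[n_2+m_2]$. After that, the proof is the six-entry table of $D$-values followed by the one-line reading-off above.
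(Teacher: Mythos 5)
Your approach is exactly the paper's one-line proof: apply Proposition~\ref{p:discshift} to each piece and then Theorem~\ref{t:simplex-union} with $B=\emptyset$ (so $D_{\langle\emptyset\rangle}\equiv 0$), and your remark that the ground-set labels carried by $K_i$ inside $[n+m]$ are immaterial to $\exshift{K_i}$ is correct. Your table of $D$-values is right, and the face list you read off is right: vertices $[n+m]$, edges $\{1,t\}$ for $2\le t\le n+m-1$, $\{2,t\}$ for $3\le t\le n+m-2$, $\{3,t\}$ for $4\le t\le n+3$, and triangles $\{1,2,t\}$ for $3\le t\le n+m-2$, $\{1,3,t\}$ for $4\le t\le n+3$.

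However, you should not have ended with ``This is the complex in the statement'': it is not, because the printed display is garbled (it writes $\{i,j\}$ but ranges over $t$) and has typos. The statement caps the $\{3,t\}$ edges at $n+m-3$ and the $\{1,3,\cdot\}$ triangles at $\{1,3,n-3\}$, whereas the correct bounds, which you computed, are $n+3$ in both places. The very $f$-vector check you propose to run flags this: the printed formula yields $3(n+m)-12$ edges and $2n+m-10$ triangles, not the required $3n+2m-6$ and $2n+m-4$; the two agree only when $m=6$. So your mathematics is sound, the proof route is the paper's, but the lemma's displayed formula itself needs to be corrected, and the proposal should have said so rather than tacitly matching it.
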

\begin{proof}
    The proof follows by applying Proposition~\ref{p:discshift} and Theorem~\ref{t:simplex-union}.
\end{proof}

\begin{corollary}
    Let $X=X_1\sqcup X_2$ be two disjoint $2$-dimensional discs, then $\exshift{U_n(X)}$ is neither concentrated nor a homology lex-segment (see Appendix~\ref{apx:lex} for definition).
\end{corollary}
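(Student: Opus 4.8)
The goal is to show that for $X = X_1 \sqcup X_2$ the disjoint union of two $2$-discs, $\exshift{U_n(X)}$ is neither concentrated nor a homology lex-segment. The main ingredients are already in place: Lemma~\ref{c:twodiscshift} gives an explicit formula for $\exshift{K_1 \sqcup K_2}$ in terms of the interior/boundary vertex counts $(n_1,m_1)$ and $(n_2,m_2)$, and the argument in Corollary~\ref{c:discsconcentration} shows that, when sampling a uniform $n$-vertex triangulation of a single disc, the number of boundary vertices is a non-degenerate random variable (each value $m\ge 3$ is attained with probability bounded away from $0$).

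\textbf{Step 1: non-concentration.} First I would observe that the uniform model on $X = X_1 \sqcup X_2$ can be coupled with two independent uniform disc triangulations of prescribed sizes: conditioning on the partition $n = n_1 + n_2$ of vertices between the two components (which again has a non-degenerate limiting distribution by the same Tutte-type enumeration estimates, or even more simply one can just condition on the event that component $1$ has exactly, say, $5$ boundary vertices and component $2$ has exactly $5$ boundary vertices while both have a growing number of interior vertices, which has probability bounded away from $0$ by Corollary~\ref{c:discsconcentration} applied componentwise together with independence). Plugging the relevant values of $(n_1,m_1,n_2,m_2)$ into the formula of Lemma~\ref{c:twodiscshift}, the top-dimensional part $\{A\in\binom{[n+m-2]}{3}\colon A\le\{1,3,n-3\}\}$ depends on $n$ (total interior vertex count) but the edge part $\{\{i,j\}\colon 1\le i\le 3,\ i\le t\le n+m-i\}$ and more importantly the precise shifted complex depends on $m = m_1+m_2$, which is not a deterministic function of $n$. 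Concretely, fixing two distinct admissible values $m \ne m'$ for the total number of boundary vertices (say $m=6$ and $m=7$), each occurs with probability bounded away from $0$, and they yield different shifted complexes; hence $\exshift{U_n(X)}$ cannot be concentrated on a single complex.

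\textbf{Step 2: not a homology lex-segment.} Here I would compare the formula from Lemma~\ref{c:twodiscshift} with the combinatorial characterization of homology lex-segment complexes in Appendix~\ref{apx:lex}. The point is that $\exshift{K_1\sqcup K_2}$ has a $1$-skeleton that is \emph{not} lex-initial among graphs with the same face numbers and Betti numbers: the disjoint union of two discs has $\beta_0 = 2$, $\beta_1 = 0$, and the shifted $1$-skeleton $\{\{i,j\}\colon 1\le i\le 3,\ i\le t\le n+m-i\}$ contains edges starting with vertex $3$ but is missing certain earlier lex edges forced by having two components rather than one — more precisely, a connected $2$-hyperconnected complex would have $1$-skeleton with no edge through $3$ as the smallest vertex being redundant, whereas the two-component complex necessarily has $\{3,k\}$ edges, and simultaneously the third Betti-type datum (two disc components) forces the $\{4,5\}\notin$ and $\{3, n+m-3\}\in$ pattern inherited from each component. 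I would verify directly from the explicit description that the resulting shifted complex fails at least one of the numerical conditions characterizing homology lex-segments à la Björner--Kalai (for instance, the number of edges with smallest vertex $1$, resp. $2$, resp. $3$, does not match what a lex-segment with these invariants would give, because the two $2$-hyperconnected components each contribute a ``full'' spread of edges). Since this holds for \emph{every} realizable value of $(n_1,m_1,n_2,m_2)$ (not just a.a.s.), $\exshift{U_n(X)}$ is deterministically never a homology lex-segment, a fortiori a.a.s.\ not.

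\textbf{Main obstacle.} The non-concentration part is essentially immediate from Corollary~\ref{c:discsconcentration} plus independence. The real work is Step 2: extracting from the explicit formula in Lemma~\ref{c:twodiscshift} a clean numerical invariant and checking it against the Björner--Kalai-style characterization of homology lex-segments given in Appendix~\ref{apx:lex}. The subtlety is that one must confirm the failure is robust — holding for all admissible boundary/interior splittings, including the ``extreme'' ones — so that the statement is genuinely deterministic rather than merely a.a.s.; I expect this amounts to a short but careful bookkeeping comparison of edge- and triangle-counts by smallest vertex.
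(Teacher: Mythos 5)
Your overall route matches the paper's: extract the explicit shifted complex from Lemma~\ref{c:twodiscshift}, and derive non-concentration from Corollary~\ref{c:discsconcentration}. Step~1 is fine (you supply more detail on the coupling than the paper does, which is harmless). Step~2, however, is left at the level of an intention rather than a proof, and the specific facts you gesture at do not actually deliver a lex-order violation: you write that the structure ``forces the $\{4,5\}\notin$ and $\{3,n+m-3\}\in$ pattern,'' but $\{4,5\}$ is lexicographically \emph{larger} than $\{3,n+m-3\}$, so this pair is perfectly compatible with being a lex-segment. What you need is a missing edge that is lexicographically \emph{smaller} than some present edge, within the non-coned part.

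The concrete witness, which the paper states, comes directly from the explicit $1$-skeleton $\{\{i,j\}:1\le i\le 3,\ i<j\le n+m-i\}$ in Lemma~\ref{c:twodiscshift}: one has $\{1,n+m-1\},\{3,n-3\}\in\exshift{K_1\sqcup K_2}$ while $\{2,n+m-1\}\notin\exshift{K_1\sqcup K_2}$. In a homology lex-segment complex $K_{\fvec,\bbeta}=(1*C)\cup E$, the edges avoiding vertex $1$ are precisely $E_1$, an initial segment of $\binom{[2,n+m]}{2}$ in lex order; since $\{2,n+m-1\}<_{\lex}\{3,n-3\}$ and both avoid vertex $1$, presence of $\{3,n-3\}$ would force presence of $\{2,n+m-1\}$, a contradiction. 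This holds for every admissible $(n_1,m_1,n_2,m_2)$, making the statement deterministic, as you anticipated. Replace ``I would verify\dots by a short but careful bookkeeping comparison'' with this one-line witness and Step~2 is complete.
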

\begin{proof}
    Let $K_1,K_2$ and $n$ and $m$ as in Lemma~\ref{c:twodiscshift}, then we have that $\{2,n+m-1\}\notin \exshift{K_1\sqcup K_2}$ while $\{1,n+m-1\},\{3,n-3\}\in \exshift{K_1\sqcup K_2}$ showcasing that the complex is not an homology lex-segment. The lack of concentration follows from  Corollary~\ref{c:discsconcentration}.
\end{proof}

\noindent

\section{Concluding remarks}~\label{sec:Conclude}
Conjecture~\ref{conj:uniform_surface} states that the exterior shifting of the uniform triangulation $U_n(S_g)$ of a fixed orientable surface is a homology lex-segment. In particular, this implies that  $U_n(S_g)$ is a.a.s. $K_6$-free (as otherwise the shifting contains the edge $56$). 
To the best of our knowledge, this statement is not known, although~\cite{budzinski2022multi} shows that a fixed neighborhood around \textit{almost} every vertex is a.a.s. planar. We now suggest an even more far-reaching problem, which arises naturally as a possible approach to extending our proof of Theorem \ref{t:aasDelauney} to Conjecture~\ref{conj:uniform_surface}:

\begin{problem}
    Fix a genus $g$. Show that there exists a Riemannian metric for $S_g$ such that for every fixed $\rho>0$, $U_n(S_g)$ can a.a.s. be embedded in $S_g$  such that every edge has length less than $\rho$. 
\end{problem}

In addition, note that in order to prove the analogous Theorem~\ref{t:aasUniform1-dim} for graphs, we first showed the deterministic statement Proposition~\ref{p:graphsdlex} on their barycentric subdivision. We conjecture that its analog holds for barycentric subdivision of surfaces as well:   
\begin{conjecture}\label{conj:br(surface)}
    If $K$ is a surface triangulation, then $\exshift{\sd(K)}$ is a homology lex-segment.
\end{conjecture}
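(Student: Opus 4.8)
\textbf{Proof proposal for Conjecture~\ref{conj:br(surface)}.}
The plan is to mimic the one-dimensional argument in Proposition~\ref{p:graphsdlex}: identify, purely from the combinatorics of the barycentric subdivision $\sd(K)$, which faces can possibly appear in $\exshift{\sd(K)}$, and then pin down the complex exactly using the fact that algebraic shifting preserves both the $f$-vector and the Betti numbers. Since $\sd(K)$ triangulates the same surface as $K$, the target homology lex-segment is $\Delta(S,n')$ (or its non-orientable analogue) where $n'=|(\sd K)_0|$; by Corollary~\ref{c:splithlex} together with Corollary~\ref{c:hlextriang} it suffices to show the two ``tail vanishing'' conditions $\tail(\exshift{\sd K},\{5,6\})=\emptyset$ and $\tail(\exshift{\sd K},\{1,4,5\})=\emptyset$ when $K$ is orientable of genus $g\ge 1$ (and the analogous one-shift-shorter statements for $\mathbb{R}P^2$ etc.), because once these two initial-tail conditions hold the rest of the complex is forced by the $f$-vector and shiftedness exactly as in the proofs of Proposition~\ref{p:discshift} and Proposition~\ref{p:graphsdlex}.

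The key step is therefore the edge bound: I would show that $\sd(K)$ contains no subcomplex that is a $2$-hypercycle in Kalai's sense, equivalently (by~\cite[Theorem 5.4]{kalai85-hyperconnectivity}, as used in Proposition~\ref{p:graphsdlex}) that the $1$-skeleton of $\sd(K)$ is not generically $3$-rigid on any vertex subset spanning a $2$-cycle --- or more directly, that $\exshift{\sd K}$ has no edge $\ge\{5,6\}$ in lex order. The structural feature to exploit is that in a barycentric subdivision every edge of $\sd(K)$ joins two barycenters of faces of $K$ of \emph{different} dimensions, so $\sd(K)$ is naturally $3$-partite with parts $V_0,V_1,V_2$ (barycenters of vertices, edges, $2$-faces); a barycenter of a $2$-face $\sigma$ has degree exactly $6$, a barycenter of an edge has degree $4$, and every triangle of $\sd(K)$ meets each of $V_0,V_1,V_2$ exactly once. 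One wants to leverage this to show that no ``dense'' substructure (a $2$-hypercycle would need average degree $>$ roughly $3$ among its vertices, after removing low-degree vertices iteratively) can survive: degree-$4$ vertices in $V_1$ and degree-$6$ vertices in $V_2$ can in principle be too plentiful, so a naive degree count does not immediately kill a $2$-hypercycle, and this is where real work is needed.

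The main obstacle, as I see it, is precisely that the $1$-dimensional proof used the trivial fact ``a vertex of $\sd(G)\setminus G$ has degree $\le 2$, hence cannot lie in a $3$-edge-connected graph,'' and there is no equally cheap analogue in dimension two: the relevant notion of connectivity is hyperconnectivity / generic $3$-rigidity, and one must rule out $2$-hypercycles inside $\sd(K)$. A plausible route is to use that the barycentric subdivision is a \emph{flag} complex and invoke rigidity-theoretic results about flag or balanced complexes --- e.g., one could try to show $\sd(K)$ is ``minimally'' close to a stacked-type complex in the sense that contracting all the $V_1$- and $V_2$-barycenters recovers $K$ (cf.\ Proposition~\ref{p:minor}, since $|\sd K|$ is a subdivision of $|K|$), combine this with the vertex-split monotonicity of Proposition~\ref{p:splittail} to transfer the tail bound from $K$ to $\sd(K)$, and finally appeal to Corollary~\ref{c:hlextriang} to get a single genus-$g$ triangulation with homology-lex-segment shifting from which the vertex-split chain starts. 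In other words: realize $\sd(K)$ as obtained from \emph{some} homology-lex-segment triangulation of $S$ by a sequence of vertex splits, and then Corollary~\ref{c:splithlex} finishes it. Establishing that such a vertex-split chain exists --- equivalently, that $\sd(K)$ edge-contracts down to an irreducible triangulation whose shifting is already known to be a homology lex-segment, while staying within the surface at every step --- is the crux, and likely requires an argument in the spirit of Lemma~\ref{l:interior-edge} and Proposition~\ref{p:minor} adapted to the barycentric setting.
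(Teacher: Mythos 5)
You give up on the degree-count route too soon. The paper's argument (which is conditional; see below) shows that a plain degeneracy argument \emph{does} kill the edge $\{5,6\}$: in $\sd(K)$ the barycenters of edges of $K$ have degree exactly $4$, and after deleting them the barycenters of triangles of $K$ have degree $3$, and after deleting those too the original vertices form an independent set (since $\sd(K)$ is the order complex of the face poset, no two original vertices are adjacent). So one can iteratively peel vertices of degree $\le 4$ down to nothing, and Kalai's Lemma 4.3(i) in~\cite{kalai85-hyperconnectivity} (valid over any field) then gives $\{5,6\}\notin\exshift{\sd(K)}_1$. There is no ``real work'' left for the edge bound; the hard part is elsewhere. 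This is an iterated-degree (degeneracy) argument, not an average-degree count, which is why your objection that ``degree-$4$ vertices in $V_1$ and degree-$6$ vertices in $V_2$ can be too plentiful'' does not apply.

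Two further gaps. First, your side-condition $\tail(\exshift{\sd K},\{1,4,5\})=\emptyset$ is not what one needs: for $g\ge 1$ the target homology lex-segment $\Delta(S_g,m)$ \emph{contains} $\{1,4,5\}$ (since $\{1,4,5\}\le\{1,4,4+4g\}$), so that tail is nonempty. Moreover, unlike Proposition~\ref{p:discshift} where Cohen--Macaulayness forces pure shifting and lets the $f$-vector determine everything, for a surface the shifting is not pure; shiftedness plus face and Betti numbers plus the edge set do \emph{not} by themselves pin down $\exshift{\sd K}_2$. The paper supplies the missing ingredient by \emph{assuming} Conjecture~\ref{conj:area-rig} to obtain $\{1,3,m\}\in\exshift{\sd(K)}_2$, and only then does shiftedness force the rest. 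In other words, the paper does not prove Conjecture~\ref{conj:br(surface)} outright, only modulo Conjecture~\ref{conj:area-rig}; a proposal that claims to finish by ``$f$-vector and shiftedness'' alone is missing this dependency.

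Second, the alternate strategy you sketch --- contract $\sd(K)$ to some triangulation whose shifting is a homology lex-segment (Corollary~\ref{c:hlextriang}) and then apply Corollary~\ref{c:splithlex} --- is exactly the engine used for Delaunay triangulations, but there it works only because of the hard geometric universality Theorem~\ref{t:delaunayref}, which requires the triangulation to be $\rho$-dense relative to a fixed target. Barycentric subdivision once gives no such density: $\sd(K)$ need not edge-contract, through surface triangulations, to the particular homology-lex-segment triangulation produced by Corollary~\ref{c:hlextriang}, and you offer no replacement for Theorem~\ref{t:delaunayref} in this setting. Without that, Proposition~\ref{p:minor} only gets you back to $K$ itself, whose shifting you have no control over (Proposition~\ref{p:splittail} goes the wrong way: it bounds the tail of $\sd(K)$ \emph{by} that of $K$, and $\exshift{K}$ is unknown). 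So the contraction route, as proposed, does not close.
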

Conjecture~\ref{conj:br(surface)} follows from the following conjecture, for exterior algebraic shifting over any field:
\begin{conjecture}
[{\cite[Conj.5.1]{bulavka23-rigidity}} for the case of characteristic zero]
\label{conj:area-rig}
For every triangulation $K$ on $n$ vertices of a connected compact surface without boundary, 
$\{1,3,n\} \in \exshift{K}$. 
\end{conjecture}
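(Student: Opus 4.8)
\textbf{Proof proposal for Conjecture~\ref{conj:area-rig}.}

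The plan is to reduce the statement about all triangulations to the single ``minimal'' case for each surface and then propagate via vertex-splits. First I would recall that, by Proposition~\ref{p:splitvolume}, if $\{1,3,n\}\in\exshift K$ for a surface triangulation $K$ on $n$ vertices, then $\{1,3,n+1\}\in\exshift{K'}$ for any vertex-split $K'$ of $K$. Hence it suffices to verify the claim on a generating set of surface triangulations under vertex-splits: for each closed surface it is enough to check one triangulation from which every other triangulation of that surface is reachable by a sequence of vertex-splits. For the sphere this is the boundary of the tetrahedron; for higher-genus orientable (resp.\ non-orientable) surfaces one may take the irreducible triangulations, or, more economically, the homology lex-segment triangulations produced in Corollary~\ref{c:hlextriang} by iterated connected sums. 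In the latter case the conclusion $\{1,3,n\}\in\exshift K$ is immediate from the explicit description of $\Delta(S_g,n)$, $\Delta_0(N_g,n)$, $\Delta_2(N_g,n)$, since $\{1,3,n\}$ is lexicographically at most the largest triangle listed there.

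The subtle point is that vertex-splits do \emph{not} in general reach every triangulation from a single chosen one while keeping the vertex count monotone in the right way: edge contractions (the inverse move) are the operation for which finiteness of irreducible triangulations is known (Barnette, and Barnette--Edelson for higher genus). So the cleaner route is: given an arbitrary triangulation $L$ of $S$ on $n$ vertices, repeatedly contract contractible edges until reaching an irreducible triangulation $L_0$ of $S$; since there are only finitely many irreducible triangulations of each surface, it suffices to establish $\{1,3,|L_0|\}\in\exshift{L_0}$ for each of them --- a finite check --- and then reverse the contractions, at each step applying Proposition~\ref{p:splitvolume} to conclude $\{1,3,n\}\in\exshift L$. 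The finite base case can in principle be handled by a direct computation of the exterior algebraic shifting of each irreducible triangulation (over $\Q$, then the characteristic-zero statement follows; the ``any field'' refinement requires the stronger area-rigidity input of \cite{bulavka23-rigidity}).

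The main obstacle is precisely this base case: the number of irreducible triangulations grows quickly with the genus (it is unbounded over all surfaces), so a genuinely uniform argument --- rather than a surface-by-surface computation --- is needed. The natural candidate is a \emph{rigidity-theoretic} argument: $\{1,3,n\}\in\exshift K$ is equivalent to the non-vanishing of a certain generic minor of the exterior shifting, which \cite{bulavka23-rigidity} interprets as generic \emph{area rigidity} of the $2$-skeleton, i.e., the generic ``area rigidity matroid'' having full rank in the relevant row. One would want to show that every surface triangulation is generically area-rigid in this sense, perhaps by a gluing/vertex-split induction internal to the rigidity framework (area-rigidity is preserved under vertex-splits and under connected sums along a triangle), bottoming out at the tetrahedron. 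I expect the hard part to be setting up the correct notion of generic area-rigidity and proving it is inherited under these operations with no loss of rank in the $\{1,3,n\}$ coordinate; this is exactly the content left open as \cite[Conj.~5.1]{bulavka23-rigidity}, so absent that input the best one can currently do is the finite-base-case reduction sketched above, which already proves the conjecture in characteristic zero for each fixed surface.
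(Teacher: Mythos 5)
The statement you are trying to prove is presented in the paper as Conjecture~\ref{conj:area-rig}, not as a theorem: the authors attribute it to \cite[Conj.~5.1]{bulavka23-rigidity} and explicitly leave it open. There is therefore no proof in the paper to compare against; the paper only uses it conditionally, showing that Conjecture~\ref{conj:br(surface)} would follow from it.

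Your reduction is sensible as far as it goes: contract any triangulation $L$ of a fixed closed surface $S$ down to an irreducible triangulation $L_0$ of $S$ (of which there are finitely many by Barnette and Barnette--Edelson), verify $\{1,3,|L_0|\}\in\exshift{L_0}$ for each such $L_0$, and propagate upward along the reversed contraction sequence using Proposition~\ref{p:splitvolume}. This is in the same spirit as the vertex-split machinery the paper itself deploys to prove Theorem~\ref{t:aasDelauney} via Corollaries~\ref{c:splithlex} and~\ref{c:hlextriang}. You also correctly discard the idea of propagating from the single homology lex-segment triangulation of Corollary~\ref{c:hlextriang}, since a single seed does not generate all triangulations under vertex-splits; the irreducible triangulations are the right base set.

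The gap is in your final sentence: the reduction does not ``already prove the conjecture in characteristic zero for each fixed surface.'' It only reduces the conjecture, for each fixed surface, to a finite but \emph{unperformed} computation over that surface's irreducible triangulations, and the size of that computation grows without bound across surfaces, so no uniform theorem results. What you have written is a conditional reduction plus an honest identification of where the difficulty lies (a genus-uniform argument, e.g.\ generic area-rigidity preserved under vertex-splits and connected sums). That is a reasonable research roadmap, and it is consistent with the fact that the paper states this as a conjecture rather than proving it; but it is not a proof, and should not be phrased as one.
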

\begin{proof} of Conjecture~\ref{conj:br(surface)} modulo 
Conjecture~\ref{conj:area-rig}: 
Let $K$ triangulate $S_g$ and denote $m=f_0(\sd(K))$.
The $1$-skeleton of $\sd(K)$ is $3$-hyperconnected, see e.g.~\cite[Thm.3.4.2]{nevo07-thesis}, and similarly over every field, hence $\{3,m\}\in \exshift{\sd(K)}_1$. To see that $\{5,6\}\notin \exshift{\sd(K)}_1$, it is enough to remove from $\sd(K)$ vertices of degree at most $4$ one by one until we reach a collection of isolated vertices; see Kalai~\cite[Lem.4.3(i)]{kalai85-hyperconnectivity} for shifting over a field of characteristic zero, and the same proof holds over every field. First we remove vertices at the barycenter of edges of $K$ (their degree is $4$), then we remove vertices at the barycenter of triangles of $K$ (their degree at this stage is $3$), to reach the original vertices of $K$, and no higher dimensional faces. As 
$\exshift{\sd(K)}$ is shifted and has the same number of edges as $\sd(K)$, we conclude that $\exshift{\sd(K)}_1=\{A\in \binom{[m]}{2}:\ A\le_{\lex} \{4,4+6g\} \}$.
Assuming Conjecture~\ref{conj:area-rig}, $\{1,3,m\}\in \exshift{\sd(K)}_2$, and as $\exshift{\sd(K)}$ is a shifted simplicial complex, not containing the edge $\{5,6\}$ with the same face and Betti numbers as $\sd(K)$ we conclude that $\exshift{\sd(K)}_2=\{A\in \binom{[m]}{3}:\ A\le_{\lex} \{1,4,4+4g\}\}\cup\{\{2,3,4\}\}$.
\end{proof}

We remark that in order to extend Theorem~\ref{t:aasDelauney} to fields of characteristics different from $0$ and $2$ we only need to show that Corollary~\ref{c:hlextriang} holds over such fields. We leave this last item as an open problem.
\begin{problem}
    Let $K$ be the triangulation of either the torus or the projective plane appearing in Figure~\ref{fig:triangulations}(2-3). Then for every prime $p$, the exterior algebraic shifting of $K$ over a field of characteristic $p$, $\Delta^{ex}_p(K)$, is a homology lex-segment.
\end{problem}

\bibliographystyle{alpha}
\bibliography{main}

\appendix
\section{Homology lex-segment complexes}\label{apx:lex}
In analogy to the compressed complexes, constructed as collections of initial segments w.r.t. the reverse lexicographic order, and their modification by Bj\"{o}rner and Kalai~\cite{Bjorner-Kalai-ACTA} that realizes all (face,Betti)-vectors of simplicial complexes, here we construct their analog w.r.t. the lex order. The main difference is that w.r.t. lex order, the shadow of an initial segment need not be an initial segment. We remedy this by appropriate augmentation of the shadow, characterizing numerically which face vectors and (face,Betti)-vectors of simplicial complexes are realized  
by the defined \emph{lex-segment complexes} and homology \emph{lex-segment complexes}.

Let $N=[n]=\{1 < \cdots < n\}$ denote the first $n$ positive integers with their natural order.
For a family of sets $K \subseteq 2^N$ we will denote by $K_r$ its members of size $r+1$.
A family of sets $K \subseteq 2^N$ is a \emph{simplicial complex} if whenever $A\subseteq B\in K$ then $A\in K$ as well.

For two distinct sets $A,B\in \binom{N}{r}$ we say that \emph{$A$ is lexicographically smaller than $B$}, denoted by $A<B$, if $\min ((A\cup B)\setminus (A\cap B)) \in A$.
For two integers $m,r\geq 0$ we set $I(N,r,m)$ to denote the initial segment consisting of the first $m$ elements in $\binom{N}{r+1}$ with respect to the lexicographic order. 
Clearly, there exists a unique sequence of integers $0=a_{-1}<a_0< a_1< \cdots < a_r\leq n$ such that $$I(N,r,m) = \left \{A \in \binom{N}{r+1} \colon A \leq \{a_0,\dots,a_r \} \right \},$$

and $m$
is given in terms of the last element $\{a_0<\cdots<a_r\}$ by $$m=|I(N,r,m)| = \sum_{i=0}^r \sum_{j=a_{i-1}+1}^{a_i-1} \binom{n-j}{r-i} + 1.$$

The \emph{shadow} of a set family $\mathcal{F} \subseteq \binom{N}{r+1}$ is defined by $\partial \mathcal{F} = \{A\in \binom{N}{r} \colon A\subseteq B \in \mathcal{F}\}$.
The shadow of an initial segment is characterized in terms of the last element $\{a_0<\cdots <a_r\}$ as follows
$$\partial I(N,r,m) = 
\begin{cases} 
    \binom{N}{r} & \text{if}~a_0\geq 2, \\
    \left \{A \in \binom{N}{r} \colon A \leq \{a_1,\dots, a_r\} \right \} & \text{if}~a_0=1, a_1\geq 3,\\
    \left \{ \{i\} \cup A \colon A \in \binom{N}{r-1},~i\in \{1,2\},~A\leq \{a_2,\dots, a_r\},~i\notin A  \right \} & \text{otherwise.}
\end{cases}$$
The last case shows that in general the shadow of an initial segment is not an initial segment.
For example, the shadow of the initial segment $\{A\in \binom{N}{4}\colon A \leq \{1,2,4,5\}\}$ contains $\{2,4,5\}$
but not $\{1,n-1,n\}$ whenever $n\geq 6$; the latter being lexicographically smaller than the former.
For this reason we proceed to augment the shadow of an initial segment by adding the least number of sets in order to make it an initial segment.
Concretely, we define
$$\overline{\partial I(N,r,m)} = 
\begin{cases} 
    \binom{N}{r} & \text{if}~a_0\geq 2, \\
    \left \{A \in \binom{N}{r} \colon A \leq \{a_1,\dots, a_r\} \right \} &  \text{otherwise.}
\end{cases}$$
For an integer $m$ we set $\partial_r(n,m) = |\overline{\partial I(N,r,m)}|$.

Now, we proceed to define the lexicographically smallest simplicial complex with a given $f$-vector.
Let $\fvec = (f_0,f_1,\dots)$ be a non-negative integer vector, set $n=f_0$ and $K_{\fvec} = \bigcup_{r\geq 0}I(N,r,f_r)$.
It is immediate from this definition that the $f$-vector $f(K_{\fvec})$ of the set system $K_{\fvec}$ is $f(K_{\fvec}) = \fvec$.
The following lemma gives the necessary and sufficient condition for $K_{\fvec}$ to be a simplicial complex.
\begin{lemma}
    Let $\fvec$ be a non-negative integer vector. Then, $K_{\fvec}$ is a simplicial complex if and only if for every $r$, $\partial_r(n,f_r) \leq f_{r-1}$, where $n=f_0$.    
  \end{lemma}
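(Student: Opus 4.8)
The plan is to prove both implications by analyzing when the set system $K_{\fvec} = \bigcup_{r\geq 0} I(N,r,f_r)$ is closed under taking subsets, which is the defining property of a simplicial complex. The key reduction is that $K_{\fvec}$ is a simplicial complex if and only if $\partial K_r \subseteq K_{\fvec}$ for every $r$, since every subset of a face of size $r+1$ is obtained by iterating the operation of passing to the shadow one dimension down, and $K_{\fvec}$ is by construction a union of level sets $K_r$. So the whole statement hinges on comparing $\partial I(N,r,f_r)$ with $I(N,r-1,f_{r-1})$.

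First I would observe that since $K_{r-1} = I(N,r-1,f_{r-1})$ is an \emph{initial} lex-segment, the containment $\partial I(N,r,f_r) \subseteq I(N,r-1,f_{r-1})$ holds if and only if the \emph{augmented} shadow $\overline{\partial I(N,r,f_r)}$ — which is by definition the smallest initial lex-segment containing $\partial I(N,r,f_r)$ — is contained in $I(N,r-1,f_{r-1})$. Indeed, any initial lex-segment containing $\partial I(N,r,f_r)$ must contain $\overline{\partial I(N,r,f_r)}$, and conversely $\overline{\partial I(N,r,f_r)}$ itself contains $\partial I(N,r,f_r)$. Now both $\overline{\partial I(N,r,f_r)}$ and $I(N,r-1,f_{r-1})$ are initial lex-segments in $\binom{N}{r}$, so one is contained in the other precisely according to their sizes: $\overline{\partial I(N,r,f_r)} \subseteq I(N,r-1,f_{r-1})$ iff $|\overline{\partial I(N,r,f_r)}| \leq |I(N,r-1,f_{r-1})|$, i.e. iff $\partial_r(n,f_r) \leq f_{r-1}$ — which is exactly the claimed numerical condition.

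The remaining point is to make sure the ``subset-closed $\iff$ shadow-closed in each degree'' reduction is airtight, and to handle the degenerate degrees. If $f_r = 0$ then $K_r = \emptyset$, $\partial_r(n,f_r) = 0$, and the inequality $0 \le f_{r-1}$ is automatic; this is consistent because an empty top level imposes no new closure constraint. One should also confirm the base case $r=0$: $K_0 = I(N,0,f_0)$ is the set of $f_0$ smallest singletons, and since $n = f_0$ this is all of $\binom{N}{1}$, so there is no $\partial_0$ condition to worry about (and $\partial(\{\text{vertex}\}) = \{\emptyset\} \subseteq K$ trivially, assuming the convention $\emptyset \in K$).

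The main obstacle — really the only nontrivial content — is verifying that $\overline{\partial I(N,r,m)}$ as \emph{defined} in the appendix (the explicit two-case formula in terms of $a_0$) genuinely equals the smallest initial lex-segment containing $\partial I(N,r,m)$, and thus that $\partial_r(n,m) = |\overline{\partial I(N,r,m)}|$ is the right quantity to compare against $f_{r-1}$. This requires checking, from the three-case description of $\partial I(N,r,m)$, that: in the first case the shadow is already everything; in the ``otherwise'' case the smallest initial segment containing $\{1\}\cup A$ and $\{2\}\cup A$ for all $A \le \{a_2,\dots,a_r\}$ is exactly $\{B : B \le \{1, a_2, \dots, a_r\}\}$ — wait, one must be careful: the largest element of $\partial I$ in that case is $\{2, a_2, \dots, a_r\}$, but the smallest initial lex-segment containing it also contains all sets of the form $\{1\}\cup C$ with $C$ arbitrary in $\binom{N}{r-1}$, and these need to already be present, which they are since $a_0 = 1$ forces the last element of $I(N,r,m)$ to begin with $1$. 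Pinning down this combinatorial identity precisely is where the care goes; everything else is the routine ``two initial segments nest according to size'' argument.
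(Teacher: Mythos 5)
Your proof is correct and follows essentially the same route as the paper's: reduce to level-by-level shadow inclusion, then replace $\partial I(N,r,f_r)\subseteq I(N,r-1,f_{r-1})$ by $\overline{\partial I(N,r,f_r)}\subseteq I(N,r-1,f_{r-1})$ (using that $\overline{\partial}$ is the least initial lex-segment containing $\partial$ and that the target is itself an initial segment), and finally compare sizes of nested initial segments. Your version of the forward direction is in fact slightly cleaner than the paper's, which splits into cases according to whether $\partial(K_{\fvec})_r$ happens to already be an initial segment; your uniform use of the minimality of $\overline{\partial}$ subsumes both cases.

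One remark: the trailing sentence ``these need to already be present, which they are since $a_0=1$ forces...'' is a non-sequitur. The sets $\{1\}\cup C$ with $C$ not $\leq\{a_2,\dots,a_r\}$ are precisely the ones \emph{added} by the augmentation; they are not, and need not be, in $\partial I(N,r,m)$. The correct justification that $\overline{\partial I(N,r,m)}$ is the least enclosing initial segment in the $a_0=1,a_1=2$ case is simply: (i) every $\{i\}\cup A$ with $i\in\{1,2\}$, $A\leq\{a_2,\dots,a_r\}$ is lex-$\leq\{2,a_2,\dots,a_r\}$, and (ii) $\{2,a_2,\dots,a_r\}$ itself lies in $\partial I(N,r,m)$, so any enclosing initial segment must reach at least that far. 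This does not affect the validity of the argument, which rests only on the minimality property and not on the imprecise sentence.
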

\begin{proof}
  If $K_{\fvec}$ is a simplicial complex, let $\{a_0,\dots, a_r\}$ be the last element in  $(K_{\fvec})_r$.   
  If $\partial (K_{\fvec})_r$ is an initial segment then it coincides with the augmented boundary and consequently its size is $\partial_r(n,f_r)$.
  Now the claim follows since $$\partial_r(n,f_r) = |\overline{\partial (K_{\fvec})_r}| = |\partial (K_{\fvec})_r|\leq |(K_{\fvec})_{r-1}| = f_{r-1}$$
  where the inequality follows from the assumption that $K_{\fvec}$ is a simplicial complex.
  If $\partial (K_{\fvec})_r$ is not an initial segment then in particular $a_0=1$. Since $\{a_1,\dots,a_r\} \in (K_{\fvec})_{r-1}$ and the latter is an initial segment the inequality follows as well.

    In order to show the reverse implication, since $\partial (K_{\fvec})_r \subseteq \overline{\partial (K_{\fvec})_r}$ it is enough to verify that $\overline{\partial (K_{\fvec})_r}$ is contained in $(K_{\fvec})_{r-1}$. This follows from the fact that both are initial segments and that $$|\overline{\partial (K_{\fvec})_r}| = \partial_r (n,f_r) \leq f_{r-1} = |(K_{\fvec})_{r-1}|.$$
  \end{proof}
  
We say that a simplicial complex $K$ is a \emph{lex-segment complex} if $K = K_{\fvec(K)}$.

\begin{remark}
    If instead the reverse lexicographical order is used, namely $A\prec B$ if $\max (A\cup B)\setminus (A\cap B)\in B$, 
    then it 
    is enough to use the shadow operator without the augmentation step, since in this case the shadow of an initial segment is always an initial segment. 
    These are known as \emph{compressed} simplicial complexes.
    However, in general the algebraic shifting with respect to the reverse lexicographical order does not preserve the Betti numbers.
\end{remark}

We now modify this construction to incorporate Betti numbers, a l\'a  Bj\"{o}rner-Kalai~\cite{Bjorner-Kalai-ACTA}.
For it, let $\fvec=(f_0,f_1,\dots)$ and $\bbeta=(\beta_0,\beta_1,\dots)$ be two non-negative integer vectors and set $n=f_0$.
Similarly to~\cite{Bjorner-Kalai-ACTA} for $r\geq 0$ we set $\chi_{r-1}=\sum_{j\geq r}(-1)^{j-r}(f_j-\beta_j)$
and consider the families of sets $E = \bigcup_{r\geq 0} I(N\setminus \{1\},r,\chi_r+\beta_r)$ and
$C=\bigcup_{r\geq 0}I(N\setminus \{1\},r,\chi_r)$ and finally set $K_{\fvec,\bbeta} = (1*C)\cup E$
to be the cone over $C$ with apex $1$, union $E$. 

\begin{lemma}
\label{l:homlexsegment}
Let $\fvec,~\bbeta$ be two non-negative integer vectors.
Then, $K_{\fvec, \bbeta}$ is a simplicial complex with $f(K_{\fvec,\bbeta}) = \fvec$ and $\beta (K_{\fvec,\bbeta}) = \bbeta$
if and only if $\chi_{-1}=1$ and $\partial_r (n-1,\chi_r + \beta_r) \leq \chi_{r-1}$ for $r\geq 1$, where $n=f_0$. 
\end{lemma}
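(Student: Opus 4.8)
The plan is to mimic the Bj\"{o}rner--Kalai argument from~\cite{Bjorner-Kalai-ACTA}, adapted to the lexicographic order and to the augmentation operator $\overline{\partial}$ introduced above. First I would record the basic bookkeeping identities. Since each $I(N\setminus\{1\},r,m)$ is a family of $(r+1)$-subsets of $\{2,\dots,n\}$, the cone $1*C$ contributes, in dimension $r$, the $(r+1)$-subsets of $\{2,\dots,n\}$ of size $\chi_r$ (coming from the $r$-faces of $C$, i.e. $I(N\setminus\{1\},r-1,\chi_{r-1})$ coned) together with the $r$-faces of $C$ themselves, and $E$ contributes $I(N\setminus\{1\},r,\chi_r+\beta_r)$; one checks the families $1*C$ and $E$ overlap exactly in $1*(C\cap(\text{faces not containing }1))$, but since $C\subseteq 2^{N\setminus\{1\}}$ the union $(1*C)\cup E$ has, in each dimension $r\ge 0$,
$$f_r(K_{\fvec,\bbeta}) = \underbrace{\chi_{r-1}}_{\text{from }1*C,\ \dim r} + \underbrace{(\chi_r+\beta_r)}_{\text{from }E}.$$
Here I would use the defining recursion $\chi_{r-1}=f_r-\beta_r-\chi_r$ (equivalently $\chi_{r-1}+\chi_r+\beta_r=f_r$), valid exactly when the alternating sums telescope, to get $f_r(K_{\fvec,\bbeta})=f_r$. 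The $r=0$ case needs $\chi_{-1}+\chi_0+\beta_0=f_0$ with $\chi_{-1}=1$, so the vertex set is $\{1\}\cup(\{2,\dots,n\}$-vertices of $E)=[n]$; this is where the hypothesis $\chi_{-1}=1$ enters, and also where one must be a little careful that all of $2,\dots,n$ actually appear, which follows because $E$ and $C$ are initial segments built on $n-1$ points.

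Next I would prove that $K_{\fvec,\bbeta}$ is a simplicial complex iff the stated inequalities hold. The ``only if'' direction: if $K_{\fvec,\bbeta}$ is a complex then $\partial E_r\subseteq K_{\fvec,\bbeta}$ in dimension $r-1$, and since the $(r-1)$-faces of $K_{\fvec,\bbeta}$ not containing $1$ form the initial segment $I(N\setminus\{1\},r-1,\chi_{r-1}+\beta_{r-1})$ while $E_r=I(N\setminus\{1\},r,\chi_r+\beta_r)$, comparing the number of $(r-1)$-faces not containing $1$ that are forced into the complex — which is exactly $|\overline{\partial I(N\setminus\{1\},r,\chi_r+\beta_r)}|=\partial_r(n-1,\chi_r+\beta_r)$ once one observes the augmentation only ever adds sets avoiding $1$ (they are of the form obtained by replacing the element $1$ by larger elements in the lex-last face), and these all must lie in $C$ coned, i.e. among the $\chi_{r-1}$ faces of... — gives $\partial_r(n-1,\chi_r+\beta_r)\le\chi_{r-1}$. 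The ``if'' direction runs the same computation in reverse: the augmentation lemma proved earlier in the appendix tells us $\overline{\partial I(N\setminus\{1\},r,\chi_r+\beta_r)}$ is itself an initial segment on $\{2,\dots,n\}$ of size $\partial_r(n-1,\chi_r+\beta_r)\le\chi_{r-1}$, hence contained in $C_{r-1}=I(N\setminus\{1\},r-1,\chi_{r-1})$; thus every face of $E$ has its boundary in $1*C$, and $1*C$ is visibly a complex, so the union is a complex.

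Finally I would compute the Betti numbers. The subcomplex $1*C$ is a cone, hence acyclic (contractible), so $\widetilde H_*(K_{\fvec,\bbeta})$ is carried by $E$ relative to $1*C$; more precisely I would use the long exact sequence of the pair $(K_{\fvec,\bbeta},\,1*C)$, together with excision / the observation that the faces of $E$ not in $1*C$ are precisely those of $E$ whose boundary meets only $C$ but which themselves avoid $1$ — and then a direct chain-level argument (exactly as in Bj\"{o}rner--Kalai) shows the reduced homology of $K_{\fvec,\bbeta}$ in degree $r$ has rank $(\chi_r+\beta_r)-\chi_r=\beta_r$, once the complex condition guarantees the relevant boundary maps have the expected ranks. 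Concretely: the quotient chain complex $C_*(K_{\fvec,\bbeta})/C_*(1*C)$ is free on $E\setminus(1*C)$, its $r$-th boundary to the $(r-1)$-chains factors through $C$-coned classes and is injective of rank $\chi_{r-1}$ (this is where $\partial_r(n-1,\cdot)$ being exactly the size of the forced shadow, together with these being genuinely independent cycles, is used), leaving homology of rank $(\chi_r+\beta_r)-\chi_{r-1}\cdot 0\ \dots$ — and matching this against $\chi_{r-1}+\chi_r+\beta_r=f_r$ and the acyclicity of $1*C$ pins down $\beta_r(K_{\fvec,\bbeta})=\beta_r$. I expect the main obstacle to be precisely this last homology computation: verifying that the boundary maps in the relative complex have full expected rank, i.e. that the augmented shadow $\overline{\partial}$ faithfully records a set of independent relations, rather than merely a cardinality bound. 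This is the lex-order analog of the rank computation in~\cite{Bjorner-Kalai-ACTA}, and the augmentation step — added to force $\overline{\partial I}$ to be an initial segment — is exactly what makes the ranks behave; I would isolate it as the key lemma and prove it by an explicit ordering/triangularity argument on the relevant boundary matrix, after which assembling the long exact sequence of the pair is routine.
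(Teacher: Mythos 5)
Your outline follows the paper's in broad strokes (bookkeeping for $f$-vectors, initial-segment containments for the complex property, acyclicity of $1*C$ for the Betti numbers), but there is a genuine gap in the ``only if'' direction, plus a deferred step that the paper handles more cleanly.

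The gap: you try to derive $\partial_r(n-1,\chi_r+\beta_r)\le\chi_{r-1}$ from the simplicial-complex property alone, asserting that the augmented shadow of $E_r$ ``must lie in $C$ coned, i.e.\ among the $\chi_{r-1}$ faces.'' This does not follow. The shadow $\partial E_r$ consists of sets avoiding $1$, so the complex condition only forces $\partial E_r\subseteq E_{r-1}=I(N\setminus\{1\},r-1,\chi_{r-1}+\beta_{r-1})$, which yields the strictly weaker bound $\partial_r(n-1,\chi_r+\beta_r)\le\chi_{r-1}+\beta_{r-1}$. The extra $\beta_{r-1}$ can only be shaved off by invoking the \emph{Betti number} hypothesis $\beta(K_{\fvec,\bbeta})=\bbeta$: since $1*C$ is acyclic, $H_r(K_{\fvec,\bbeta})\cong H_r(K_{\fvec,\bbeta},1*C)$, and the relative chain group in degree $r$ is free of rank $|E_r\setminus C_r|=\beta_r$. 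Because this already equals the required Betti number, all relative boundary maps must vanish, which is exactly the statement that $\partial B\subseteq C_{r-1}$ for every $B\in E_r$; only then does the cardinality comparison of initial segments give $\partial_r(n-1,\chi_r+\beta_r)\le\chi_{r-1}$. The paper makes this argument (phrased via a Gauss-eliminated homology basis, with the would-be extra boundary face producing a forbidden linear dependency). Without it your converse is not established.

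The second point you acknowledge yourself: your forward-direction Betti computation is parked as a ``key lemma'' to be proved by a triangularity argument on the boundary matrix. The paper's route is shorter: once the numerical hypotheses give $\partial E_r\subseteq C_{r-1}$, the complex $K_{\fvec,\bbeta}$ is a \emph{near-cone} with apex $1$, and Bj\"{o}rner--Kalai's Theorem 4.3 immediately reads off $\beta_r(K_{\fvec,\bbeta})=|\{B\in K_{\fvec,\bbeta}:|B|=r+1,\ 1\notin B,\ \{1\}\cup B\notin K_{\fvec,\bbeta}\}|=|E_r\setminus C_r|=\beta_r$, with no long exact sequence or rank bookkeeping. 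You should use that observation rather than re-derive it.

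Minor slips worth cleaning up: the description of what $1*C$ contributes in dimension $r$ is garbled (it is $\chi_{r-1}$ faces containing $1$ plus $\chi_r$ faces $C_r\subseteq E_r$, not ``$(r+1)$-subsets of size $\chi_r$''), though your final count $f_r=\chi_{r-1}+\chi_r+\beta_r$ is right; and the parenthetical about the augmentation ``replacing the element $1$'' should refer to the least element of $N\setminus\{1\}$, i.e.\ $2$.
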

\begin{proof}
Assume the numerical conditions hold.
  First we show that $K_{\fvec,\bbeta}$ is a simplicial complex by verifying that $\partial (K_{\fvec,\bbeta})_r \subseteq (K_{\fvec,\bbeta})_{r-1}$.
  Since $(K_{\fvec,\bbeta})_r = \{ \{1\} \cup A \colon A\in C_{r-1}\}\cup E_r$ then
  $\partial (K_{\fvec,\bbeta})_r = \{\{1\} \cup A \colon A\in \partial C_{r-1}\} \bigcup \partial E_r \bigcup C_{r-1}$.
  From their definitions it follows that $C_{r-1}\subseteq E_{r-1}$. 
  Because $$|\partial C_{r-1}|\leq |\overline{\partial C_{r-1}}| = \partial_{r-1}(n-1,\chi_{r-1}) \leq \partial_{r-1}(n-1,\chi_{r-1}+\beta_{r-1})\leq \chi_{r-2} = |C_{r-2}|$$ 
  then $\partial C_{r-1}\subseteq \overline{\partial C_{r-1}}\subseteq C_{r-2}$ since the last two are initial segments.
  Similarly, since $|\partial E_r|\leq |\overline{\partial E_r}|=\partial_r(n-1,\chi_r+\beta_r) \leq \chi_{r-1}= |C_{r-1}|$ 
  then $\partial E_r \subseteq \overline{\partial E_r}\subseteq C_{r-1}$ since the last two are initial segments. From these three containments we conclude that $\partial (K_{\fvec,\bbeta})_r \subseteq (K_{\fvec,\bbeta})_{r-1}$.

    A simple computation shows that $f_r(K_{\fvec,\bbeta})=f_{r-1}(C)+f_r(E)=\chi_{r-1}+\chi_r+\beta_r=f_r.$    
    By the first paragraph we have that $K_{\fvec,\bbeta}$ is a near-cone and consequently its reduced Betti numbers are given by~\cite[Theorem 4.3]{Bjorner-Kalai-ACTA}
    \[
    \beta_r(K_{\fvec,\bbeta}) =|\{B\in K_{\fvec,\bbeta}\colon |B|=r+1, \{1\} \cup B \notin K_{\fvec,\bbeta}\}| = 
    f_r(E\setminus C) = 
    (\chi_r+\beta_r)-\chi_r=\beta_r.
    \]
    
    To verify the reverse implication we observe that since the cone $1*C$ has trivial homology, any nontrivial homology representative of $K_{\fvec,\bbeta}$ must contain at least one element of $E\setminus C$ in its support.
    Because $f_r(E\setminus C)=\beta_r(K_{\fvec,\bbeta})$, there exists a homology basis each having exactly one element of $E\setminus C$ in their support; as can be seen by Gauss elimination.
    Now, if $\partial E_r$ is not contained in $C_{r-1}$ it introduces a linear dependency among homology basis elements indexed by $E_{r-1}\setminus C_{r-1}$, which is a contradiction. Since $C_{r-1}$ is an initial segment, if $\partial E_r \subseteq C_{r-1}$ then indeed $\overline{\partial E_r} \subseteq C_{r-1}$ and consequently $\partial_r(n-1,\chi_r+\beta_r)\leq \chi_{r-1}$ follows by comparing their cardinalities.
\end{proof}
We say that a simplicial complex $K$ is a \emph{homology lex-segment complex} if $K=K_{f(K),\beta(K)}$.

\noindent
{\bf One-dimensional homology lex-segment complexes.}
Let $\fvec=(n,f_1)$ and $\bbeta=(\beta_0,\beta_1)$ such that $\chi_{-1} = 1$ and $\partial_1(\chi_1+\beta_1)\leq \chi_0$.
Then, $C_0=[2,n-\beta_0]$ while $(E\setminus C)_0=[n-\beta_0+1,n]$.
Since $f_2=\beta_2=0$ then $\chi_1=0$ and consequently $C_1=\emptyset$. 
On the other hand, $(E\setminus C)_1 = I(N\setminus \{1\},1,\beta_1) = \{A \in \binom{[2,n-\beta_0]}{2} \colon A \leq \{c,d\}\}$ since $\partial E_1 \subseteq C_0$.
If $n \geq 2 + \beta_0 + \beta_1$ the final expression can be simplified as
\begin{equation}
  \label{eq:hseggraph}
  K_{\fvec,\bbeta} =  [n] \bigcup \left \{ A \in \binom{[n-\beta_0]}{2} \colon A \leq \{2,\beta_1+2\} \right \}.
\end{equation}
Then $K_{\fvec,\bbeta}$ is indeed a homology lex-segment complex.

\noindent
{\bf Surface triangulation and homology lex-segment complexes.}
Let $S_g$ be an closed connected orientable surface of genus $g$ and let $\fvec,\bbeta$ be the face and reduced Betti vectors of a triangulation of $S_g$. 
In particular, these only depend on the number of vertices and the genus $g$, i.e., $\fvec=(n,3n+6(g-1),2n+4(g-1))$ and $\bbeta=(0,2g,1)$.
First, since $\chi_0=n-1$ we have that $C_0=N\setminus \{1\}$ and consequently $\{1,n\}\in (1*C)_1$.
Similarly to the 
one dimensional case,
let us assume that $n\geq 6g+4$.
Then, since $\chi_1 = 2n+4g-5$ we have that $C_1 = \{A \in \binom{N\setminus \{1\}}{2} \colon A \leq \{4,4+4g\}\}$ and so given that $\chi_2=0$ we can conclude that $(1*C)$ has facets $\{A \in \binom{N}{3} \colon A \leq \{1,4,4+4g\}\}$.

Since $\beta_1=2g$ we have that $(E\setminus C)_1=\{A \in \binom{N\setminus \{1\}}{2}\colon \{4,4+4g\} < A \leq \{4,4+6g\}\}$. Finally, given that $\beta_2=1$ implies that $(E\setminus C)_2=\{2,3,4\}$. We can conclude that
\begin{equation}
\label{eq:hsegsrf}
K_{\fvec,\bbeta} = [n] \bigcup \left \{A \in \binom{[n]}{2}\colon A \leq \{4,4+6g\} \right \} \bigcup \left \{ A \in \binom{[n]}{3} \colon A \leq \{1,4,4+4g\} \right \} \bigcup \{\{2,3,4\}\}.
\end{equation}

Let $N_g$ be a non-orientable surface or genus $g$ (without boundary) and let $\fvec,\bbeta$ be the face and reduced Betti vectors of a triangulation of $N_g$.
Similarly as above let us assume that $f_0=n\geq 4+3g$.
The homology lex-segment will depend on the characteristic of the field. For a field of characteristic $0$ we have that
$$K_{\fvec,\bbeta}:= [n] \bigcup \left \{A \in \binom{[n]}{2}\colon A \leq \{4,4+3g\} \right \} \bigcup \left \{ A \in \binom{[n]}{3} \colon A \leq \{1,4,5+2g\} \right \}.$$
On the other hand, for a field of characteristic two
$$K_{\fvec,\bbeta}:= [n] \bigcup \left \{A\in \binom{[n]}{2}\colon A \leq \{4,4+3g\} \right \} \bigcup \left \{ A \in \binom{[n]}{3} \colon A \leq \{1,4,4+2g\} \right \} \bigcup \{\{2,3,4\}\}.$$
Then $K_{\fvec,\bbeta}$ is indeed a homology lex-segment complex in all of the cases above.

\end{document}